\newtheorem{theorem}{Theorem}[section]
\newtheorem{corollary}[theorem]{Corollary}
\newtheorem{conjecture}[theorem]{Conjecture}
\newtheorem{question}[theorem]{Question}
\newtheorem{proposition}[theorem]{Proposition}
\newtheorem{lemma}[theorem]{Lemma}
\theoremstyle{definition}
\newtheorem{definition}[theorem]{Definition}
\newtheorem {example}[theorem]{Example}
\theoremstyle{definition}
\newtheorem{remark}[theorem]{Remark}
\newcommand{\CC}{\mathbb{C}}
\newcommand{\RR}{\mathbb{R}}
\newcommand{\del}{\partial}
\newcommand{\ZZ}{\mathbb{Z}}
\newcommand{\cD}{\mathcal{D}}
\newcommand{\cT}{\mathcal{T}}
\newcommand{\cC}{\mathcal{C}}
\newcommand{\cK}{\mathcal{K}}
\newcommand{\cS}{\mathcal{S}}
\newcommand{\cG}{\mathcal{G}}
\newcommand{\DD}{\mathbb{D}}
\begin{document}

\title[{Stein trisections and homotopy 4-balls}]{Stein trisections and homotopy 4-balls}

\author[P. Lambert-Cole]{Peter Lambert-Cole}
\address{ University of Georgia}
\email{plc@uga.edu}

\keywords{4-manifolds, trisections}
\subjclass[2010]{57R17; 53D05}
\maketitle


\begin{abstract}

A homotopy 4-ball is a smooth 4-manifold with boundary $S^3$ that is homotopy-equivalent to the standard $B^4$.  The smooth 4-dimensional Schoenflies problem asks whether every homotopy 4-ball in $S^4$ (or equivalently $\CC^2$) is standard.  It is well-known that if a homotopy 4-ball embeds as a compact, pseudoconvex domain in a Stein surface, then it must be standard.  In this paper, we describe a compelling reimbedding construction for homotopy 4-balls in $\CC^2$.  In particular, given a homotopy 4-ball in $\CC^2$, we construct a diffeomorphic domain that is the union of three pseudoconvex domains.  Moreover, we give an analytic criterion that ensures this domain is a standard 4-ball.

\end{abstract}

\section{Introduction}

A {\it homotopy 4-ball} is a compact, contractible smooth 4-manifold with boundary homeomorphic to $S^3$.  By Freedman, every homotopy 4-ball $B$ is homeomorphic to the {\it standard 4-ball} $B^4 \subset \RR^4$ and it remains an open question whether there exist {\it exotic} homotopy 4-balls that are not diffeomorphic to the standard 4-ball.  The related smooth, 4-dimensional Schoenflies problem asks whether every homotopy 4-ball that smoothly embeds in $S^4$ is standard.

It is well-known that the smooth 4-dimensional Schoenflies problem can be rephrased in terms of the complex geometry of $\CC^2$.  Let $\Omega \subset \CC^n$ be a domain.  An upper semicontinuous function $f: \Omega \rightarrow \RR$ is {\it plurisubharmonic} if its restriction to every complex curve in $\Omega$ is subharmonic.  A domain $\Omega$ is {\it pseudoconvex} if it admits a continuous plurisubharmonic exhaustion function. The method of filling by holomorphic disks can be used to determine the smooth topology of pseudoconvex homotopy 4-balls.

\begin{theorem}[Gromov \cite{Gromov}, Eliashberg \cite{Eliashberg}]
\label{thrm:pseudoconvex-standard}
Let $\Omega \subset \CC^2$ be a pseudoconvex homotopy 4-ball.  Then $\Omega$ is diffeomorphic to the standard 4-ball.
\end{theorem}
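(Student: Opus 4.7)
The plan is to follow Gromov's method of filling by pseudoholomorphic disks, constructing an explicit foliation of $\Omega$ by holomorphic disks with boundary on $\partial \Omega$, which exhibits $\Omega$ as the standard 4-ball. The starting point is that pseudoconvexity of $\Omega$ endows $\partial \Omega \cong S^3$ with a tight contact structure $\xi$ (the field of complex tangencies), which by Eliashberg's classification of tight contact structures on $S^3$ is isotopic to the standard structure $\xi_{\mathrm{st}}$. This is the contact-geometric input that makes the filling argument work.

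First, I would choose $R$ large enough that $\Omega \subset \mathrm{int}(B_R)$, where $B_R \subset \CC^2$ is the round ball. The ball $B_R$ carries a canonical singular foliation by holomorphic disks: for each $[v] \in \CP^1$, the disk $D_v := \CC v \cap B_R$ is holomorphic, and the family $\{D_v\}$ foliates $B_R \setminus \{0\}$ with the origin as the unique apex; the boundary circles $\partial D_v$ foliate $\partial B_R$ as the Hopf fibration. One then interpolates via a smooth $1$-parameter family of strictly pseudoconvex hypersurfaces $\{S_t\}_{t \in [0,1]}$ with $S_0 = \partial B_R$ and $S_1 = \partial \Omega$, bounding domains $\Omega_t$ with $\Omega_0 = B_R$ and $\Omega_1 = \Omega$, and attempts to continue the foliation by holomorphic disks through the deformation.

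The heart of the argument, and the main technical obstacle, is proving that this Bishop-type family persists for all $t \in [0,1]$ as a smooth foliation. The principal analytic inputs are: the maximum principle, which confines any holomorphic disk whose boundary lies on $S_t$ to the enclosed domain $\Omega_t$ and prevents tangential contact with $S_t$; Gromov compactness for $J$-holomorphic curves with totally real boundary conditions; and automatic transversality in dimension four, which makes the moduli space of such disks a smooth $2$-sphere. One must rule out degenerations: sphere bubbles are impossible in $\CC^2$, since $\CC^2$ carries no nonconstant closed holomorphic curves; disk bubbles and boundary node formation are excluded by the index and by the area/energy bounds inherited from the initial foliation. Positivity of intersection in dimension four ensures that distinct disks in the family remain disjoint.

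Once the foliation is established at $t=1$, the moduli space of its boundary circles is homeomorphic to $S^2$, and the disks exhaust $\Omega$ except for a single apex point. Collapsing the apex gives an explicit diffeomorphism $\Omega \cong B^4$, yielding the theorem. The hardest step is the compactness/transversality portion above: everything else (the existence of the deformation $S_t$, the initial foliation on $B_R$, and the reconstruction of the diffeomorphism from the foliation) is comparatively soft.
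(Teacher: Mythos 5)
The paper does not prove this theorem; it cites it to Gromov and Eliashberg and treats it as a black box, so there is no internal argument to compare your sketch against. What follows is therefore an assessment of the sketch on its own terms.

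You correctly identify filling by holomorphic disks as the engine, and most of the analytic ingredients you list (maximum principle, Gromov compactness, automatic transversality in dimension four, positivity of intersections, absence of sphere bubbles in $\CC^2$) are the right ones. The gap is the interpolating family $\{S_t\}$ of strictly pseudoconvex hypersurfaces running from $\partial B_R$ to $\partial \Omega$, which you classify at the end as ``comparatively soft.'' It is not soft, and as stated it makes the argument circular. First, the plurisubharmonic defining function for $\Omega$ lives on $\Omega$, not on the shell $B_R \setminus \Omega$, so there is no natural psh function whose level sets would supply the family; constructing one would require that $\bar\Omega$ be psh-convex inside $B_R$ in a way that does not follow from pseudoconvexity of $\Omega$ alone. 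Second, and more fundamentally, a smooth foliation of $B_R \setminus \Omega$ by embedded $3$-spheres would already show $B_R \setminus \Omega \cong S^3 \times [0,1]$, and gluing the standard ball $B_R$ back on then gives $\Omega \cong B^4$ by pure differential topology, with no pseudoholomorphic curves needed anywhere. In other words, the existence of $\{S_t\}$ is not an input to the theorem, it is essentially equivalent to the theorem. The actual Gromov--Eliashberg argument avoids the shell entirely: after perturbing $\partial\Omega$ to be generically strictly pseudoconvex, one locates an elliptic complex tangency on $\partial\Omega$, grows a Bishop family of holomorphic disks with boundary on $\partial\Omega$ directly from that point, confines the disks to $\bar\Omega$ by the maximum principle and pseudoconvexity, obtains a uniform diameter bound from the inclusion $\Omega \subset B_R$, and obtains energy bounds from Stokes' theorem against a primitive of $\omega_{\mathrm{std}}$; Gromov compactness then applies with no reference to the region between $\partial\Omega$ and $\partial B_R$. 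A secondary point: citing tightness (via Eliashberg's classification on $S^3$) as the contact-geometric input is logically backwards relative to the original sources, where tightness of fillable structures is a \emph{consequence} of the disk-filling argument, though the route through the classification of tight $S^3$'s plus uniqueness of symplectic fillings does furnish a valid alternative proof. Finally, ruling out disk bubbling and boundary degenerations is more delicate than an index count; it uses the geometry of the characteristic foliation and would, if it failed, produce an overtwisted disk, which is exactly what the argument shows cannot happen.
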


In particular, the smooth 4-dimensional Schoenflies conjecture is true if every smoothly-embedded homotopy 4-ball in $\RR^4 \cong \CC^2$ can be made pseudoconvex.

The main result of this paper is to construct reimbeddings of homotopy 4-balls in $\CC^2$ that are close to being pseudoconvex.  

\begin{theorem}
\label{thrm:pseudoconvex-reimbedding}
Let $S$ be a smoothly embedded $S^3$ in $\CC^2$ that bounds a homotopy 4-ball $B$.  Then there exists domain $Z = Z_1 \cup Z_2 \cup Z_3$ such that $Z$ is diffeomorphic to $B$ and each $Z_{\lambda}$ is pseudoconvex. 
\end{theorem}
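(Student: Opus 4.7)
The plan is to combine relative trisection theory with Eliashberg's theorem giving Stein structures on 4-dimensional 1-handlebodies. By the relative trisection theorem of Gay--Kirby, the homotopy 4-ball $B$ admits a decomposition $B = B_1 \cup B_2 \cup B_3$ in which each piece $B_\lambda$ is diffeomorphic to a 4-dimensional 1-handlebody $\natural^k(S^1 \times B^3)$, each pairwise intersection $B_\lambda \cap B_\mu$ is a 3-dimensional handlebody, and the triple intersection $\Sigma = B_1 \cap B_2 \cap B_3$ is a closed orientable surface. Since each $B_\lambda$ is a 1-handlebody, Eliashberg's theorem supplies a Stein structure on each $B_\lambda$ and, equivalently, a pseudoconvex embedding of $B_\lambda$ into $\CC^2$ as a compact Stein domain.

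The heart of the argument is producing these three pseudoconvex embeddings simultaneously in a single copy of $\CC^2$ so that they glue into a domain $Z = Z_1 \cup Z_2 \cup Z_3$ diffeomorphic to $B$. I would first place the spine of the trisection --- the union of $\Sigma$ with the compressing disks of the three 3-dimensional handlebodies $B_\lambda \cap B_\mu$ --- into $\CC^2$ in a position compatible with the ambient complex structure, for example by realizing $\Sigma$ as a totally real surface and arranging the compressing disks as totally real half-disks emanating from $\Sigma$ in the three sectors of the trisection. Each $Z_\lambda$ would then be constructed as a pseudoconvex thickening of the portion of the spine corresponding to $B_\lambda$, using Eliashberg's $h$-principle for Stein structures and the flexibility of attaching Stein 2-handles along Legendrian knots with prescribed Thurston--Bennequin framing.

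The main obstacle I anticipate is the global compatibility of the three pseudoconvex structures along their pairwise overlaps. Each $Z_\lambda$ induces a contact structure on its boundary, and for the three domains to glue into an embedded domain in a single $\CC^2$ the induced contact structures must agree on the 3-dimensional handlebodies $\partial Z_\lambda \cap \partial Z_\mu$. Because all three $Z_\lambda$ inherit their complex structures from the single ambient $\CC^2$, matching contact structures should follow once the spine has been placed correctly; the delicate part is controlling plurisubharmonic defining functions simultaneously in a neighborhood of $\Sigma$ and verifying that the resulting domain has the intended smooth topology. Once this is done, the diffeomorphism $Z \cong B$ follows because the construction produces on $Z$ a trisection combinatorially equivalent to the original one on $B$.
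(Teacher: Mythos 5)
There is a genuine gap, and it is exactly the step you flag as the ``main obstacle'': producing the three pseudoconvex pieces \emph{simultaneously inside a single} $\CC^2$ so that they meet along the spine. Eliashberg's existence theorem does give each sector $\natural_k(S^1\times B^3)$ an abstract Stein structure, and each can separately be realized as a pseudoconvex domain, but there is no $h$-principle that lets you realize three such domains as subdomains of one ambient $\CC^2$ glued along prescribed $3$-dimensional handlebodies; the matching of the induced CR/contact data along $\del Z_\lambda\cap\del Z_\mu$ is not something that ``should follow once the spine has been placed correctly'' --- it is the entire content of the theorem, and your proposal leaves it unresolved. A symptom of the missing idea is that your argument never uses the hypothesis that $B$ embeds in $\CC^2$: if it worked as written it would apply verbatim to an arbitrary homotopy $4$-ball, a strictly stronger statement than the paper establishes. (A smaller caution: totally real placements of surfaces in $\CC^2$ are constrained, so even the initial positioning of the spine is not free.)

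The paper avoids the gluing problem by never leaving the ambient space. It first builds a Stein trisection of the ambient $B^4\subset\CC^2$ --- each sector an analytic polyhedron --- via holomorphic simple branched coverings of the polydisk-like domain $Q_M$, with ``pleating'' of the holomorphic branch locus realizing interior stabilizations; after enough stabilizations this ambient trisection can be made reducible with respect to $\del B$, so that $\del B$ meets each ambient sector $X_\lambda$ in a standard $3$-ball. The analytic input is then Bedford--Klingenberg filling by holomorphic disks: the $2$-sphere $S_\lambda\subset\del X_\lambda$ bounds a Levi-flat $3$-ball foliated by holomorphic disks, which is pseudoconvex from both sides and cuts $X_\lambda$ into two pseudoconvex pieces, one of which is $Z_\lambda$. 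The diffeomorphism type of $Z_\lambda$ is pinned down by the uniqueness of Stein fillings of $\#_j S^1\times S^2$, and $Z\cong B$ because the resulting relative trisections share a spine --- that final step, via Laudenbach--Po\'enaru, is the one part of your outline that matches the paper.
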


Of course, the union of pseudoconvex domains is not necessarily pseudoconvex.  In particular, we cannot simply apply Theorem \ref{thrm:pseudoconvex-standard} to deduce that $B \cong Z$ is a standard homotopy 4-ball. However, there are natural ways to extend a domain in $\CC^n$ to a pseudoconvex domain, either by taking its hull with respect to polynomial, rational, or plurisubharmonic functions or by taking its envelope of holomorphy.

\pagebreak

\begin{question}
\label{q:hulls-envelope}
Let $Z$ be the domain constructed in Theorem \ref{thrm:pseudoconvex-reimbedding}.
\begin{enumerate}
    \item What are the polynomial/rational/plurisubharmonic hulls of $Z$?
    \item What is the envelope of holomorphy of $Z$?
\end{enumerate}
\end{question}

We can give an analytic criterion that ensures $Z$ is pseudoconvex.  Morally speaking, the domain $Z$ is an attempt to reconstruct $B$ as an analytic polyhedron.  The boundary $\del Z$ is stratified and the top-dimensional strata are Levi-flat and foliated by holomorphic disks.  The codimension-1 {\it trisection spine} $\cS = V_1 \cup V_2 \cup V_3$ (see Section \ref{sec:reconstruction}) plays the role of the {\it spine} or {\it arete} of the analytic polyhedron.  If $Z$ is an analytic polyhedron, then every holomorphic function on $Z$ reaches its maximum modulus along this spine.  

Here, the converse implies that $Z$ is pseudoconvex and therefore a standard 4-ball.  Specifically, let $Z \subset \Omega \subset \CC^2$ be a domain.  For every holomorphic function $f$ on $\Omega$ and compact set $K \subset \Omega$, let $|f|_K$ denote the maximum modulus that $f$ attains on $K$.  The {\it Shilov boundary} of $Z$ in $\Omega$ is the smallest closed subset $K$ such that $|f|_Z = |f|_K$ for all holomorphic functions on $\Omega$.

\begin{theorem}
\label{thrm:Shilov}
Suppose that the trisection spine $\cS$ is the Shilov boundary for $Z$ in $\Omega$.  Then $Z$ is pseudoconvex and diffeomorphic to the standard 4-ball.
\end{theorem}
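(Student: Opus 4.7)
The plan is to use Theorem~\ref{thrm:pseudoconvex-standard} (Gromov--Eliashberg), which reduces the entire theorem to the single claim that $Z$ is pseudoconvex. Once pseudoconvexity is in hand, that theorem immediately gives $Z$ diffeomorphic to the standard $4$-ball, and combining with Theorem~\ref{thrm:pseudoconvex-reimbedding} yields the same for $B$.

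Pseudoconvexity is a local condition on $\partial Z$, so I would work stratum by stratum. At a point $p$ lying in a top-dimensional stratum of $\partial Z$, $p$ belongs to $\partial Z_i$ for a unique $i$, and a neighborhood of $p$ in $Z$ agrees with a neighborhood of $p$ in $Z_i$; since $Z_i$ is pseudoconvex by hypothesis, $Z$ is locally pseudoconvex at $p$, and no global hypothesis is required. The nontrivial case is a point $p \in \cS$, where $Z$ locally resembles the interior of the intersection of three pseudoconvex regions meeting along their Levi-flat boundaries -- a configuration for which pseudoconvexity can fail without further input.

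Here the Shilov boundary hypothesis enters. The identity $|f|_Z = |f|_\cS$ for every $f \in \cO(\Omega)$ forces $\overline{Z}$ to lie inside the $\cO(\Omega)$-convex hull of $\cS$, so any compact subset of $Z$ has holomorphic hull controlled by $\cS$. I would exploit this in one of two equivalent ways. One route is to verify that $Z$ is holomorphically convex in $\Omega$ and invoke Cartan--Thullen, so that $Z$ is a domain of holomorphy and hence pseudoconvex. The other, more concrete, route is to construct psh defining functions $\rho_i$ for the $Z_i$ on a neighborhood of $\overline{Z}$ -- the Shilov condition ensures that no holomorphic function on $\Omega$ can detect any Hartogs-type obstruction at the spine -- and then observe that $\max_i \rho_i$ is a psh function with $Z = \{\max_i \rho_i < 0\}$. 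Either way, the output is a continuous plurisubharmonic exhaustion of $Z$.

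The main obstacle is transferring the global function-algebra statement (behavior of $\cO(\Omega)$ on all of $Z$ governed by $\cS$) to the local complex-geometric statement (no concavity in $\partial Z$ at the corner locus). The top-stratum step is routine, the reduction to Theorem~\ref{thrm:pseudoconvex-standard} is automatic, and the real content of the argument lies in using the Shilov boundary hypothesis to rule out local pseudoconcavity along $\cS$. I would expect the proof to conclude by combining such a pseudoconvexity verification with Theorem~\ref{thrm:pseudoconvex-standard} applied to $Z$, transported via the diffeomorphism $B \cong Z$ from Theorem~\ref{thrm:pseudoconvex-reimbedding}.
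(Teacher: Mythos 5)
Your plan has the right overall shape and matches the paper's reduction: show $Z$ is pseudoconvex so that Theorem~\ref{thrm:pseudoconvex-standard} applies, observe that pseudoconvexity is automatic near interior points of the three Levi-flat faces $\widetilde{D}_\lambda$ of $\partial Z$, and locate the entire difficulty at the spine $\cS$. However, neither of your two routes through the spine closes the argument, and the second contains an outright error. If the $\rho_i$ are globally defined psh functions with $Z_i = \{\rho_i < 0\}$, then $\{\max_i \rho_i < 0\} = \bigcap_i Z_i$, not the union $Z = \bigcup_i Z_i$; you have the wrong Boolean operation, and the replacement $\min_i\rho_i$ that would cut out a union is not plurisubharmonic. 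If instead the $\rho_i$ are meant to be defined only near $X_i$ and patched across the faces $H_\lambda$, then the resulting function is merely upper semicontinuous, and there is no reason its restriction to a complex disk crossing a face or the spine satisfies the sub-mean-value inequality; asserting that ``the Shilov condition ensures no Hartogs-type obstruction'' is precisely the claim to be proved, not a justification. Your first route is also not developed: the Shilov hypothesis controls $\sup_Z|f|$ for $f\in\cO(\Omega)$, but it gives no evident control over the $\cO(\Omega)$- or $\cO(Z)$-hulls of arbitrary compacta in $Z$, so holomorphic convexity and Cartan--Thullen do not obviously apply.

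What the paper actually does is convert the Shilov hypothesis into one explicit auxiliary plurisubharmonic function. It first builds a global upper semicontinuous $G\ge 0$ with $Z=G^{-1}(0)$ by patching psh functions $G_\lambda$ (with $Z_\lambda = G_\lambda^{-1}(0)$) across the strata of the trisection, noting $G$ is psh away from the spine. It then uses the Shilov hypothesis to choose, for each $z\in\cS$, a holomorphic $f_z\in\cO(\Omega)$ with $|f_z(z)| = |f_z|_Z = 1$, and sets $h = \max_{z\in\cS}|f_z| - 1$, a continuous psh function on $\Omega$ with $h\le 0$ on $Z$ and $h=0$ on $\cS$. Finally it takes $\phi_C = \max(G, Ch)$ for $C\gg 0$: near the spine $Ch$ dominates the patched function $G$, so $\phi_C$ agrees there with the psh function $Ch$, while away from the spine $\phi_C$ is a maximum of psh functions; and $\phi_C^{-1}(0) = Z$. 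This explicit regularization by peak functions drawn from the Shilov condition is the mechanism your proposal gestures at but does not supply.
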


It would be very interesting to give topological criteria in terms of a trisection diagram that ensure $\cS$ is the Shilov boundary of $Z$.

\subsection{Stein trisections}

The proof of Theorem \ref{thrm:pseudoconvex-reimbedding} relies on new results on trisections of 4-manifolds, especially of Stein trisections.  

\begin{definition}
A {\it trisection} of a closed 4-manifold is a decomposition $X = X_1 \cup X_2 \cup X_3$ such that
\begin{enumerate}
    \item each $X_{\lambda}$ is a diffeomorphic to a 4-dimensional 1-handlebody with $b_1 = k_{\lambda} \geq 0$.
    \item each double intersection $H_{\lambda} = X_{\lambda} \cap X_{\lambda-1}$ is a 3-dimensional 1-handlebody of genus $g \geq 0$, and
    \item the triple intersection $\Sigma = X_1 \cap X_2 \cap X_3$ is a closed surface of genus $g$.
\end{enumerate}
Furthermore, a {\it relative trisection} of a compact 4-manifold with boundary is a decomposition $X = X_1 \cup X_2 \cup X_3$ satisfying the first two criteria above as well as
\begin{enumerate}
    \item the triple intersection $\Sigma$ is a properly embedded surface $\Sigma$ of genus $g$ and $p \geq 1$ boundary components
    \item the boundary $\del H_{\lambda}$ is a compact surface of genus $g' \leq g$ and $p$ boundary components
    \item the intersection $X_{\lambda} \cap \del X$ is homeomorphic to $\Sigma_{g,p} \times [0,1]$.
\end{enumerate}
\end{definition}

The {\it spine} of a (relative) trisection is the union $H_1 \cup H_2 \cup H_3$ of the 3-dimensional strata.  The well-known result of Laudenbach-Po\'enaru \cite{LP} implies that the 4-dimensional pieces can be glued in uniquely, up to diffeomorphism.  Therefore the spine of a trisection determines the diffeomorphism type of the 4-manifold.

The standard 4-ball admtis a unique relative trisection $\cT_0$ where the central surface is a disk.  Furthermore, there is a (interior) stabilization operation that increases both the genus of the central surface of a (relative) trisection and some $k_{\lambda}$ by one.  We say that a relative trisection of $B^4$ is {\it standard} if it can be obtained from $\cT_0$ by a sequence of interior stabilizations.

Following Meier--Schirmer--Zupan \cite{MSZ} and by analogy with Heegaard splittings, we say that a (relative) trisection is {\it reducible} if there exists a simple closed curve $\delta \subset \Sigma$ that bounds a disk $V_{\lambda} \subset H_{\lambda}$ for each $\lambda= 1,2,3$.  Consequently, each pair $V_{\lambda} \cup V_{\lambda+1}$ is an embedded 2-sphere in $\del X_{\lambda}$, which bounds a 3-ball $B_{\lambda} \subset X_{\lambda}$.  The union $S = B_1 \cup B_2 \cup B_3$ is a smoothly embedded 3-sphere that unique defined up to diffeomorphism.  Conversely, we say that a (relative) trisection is {\it reducible} with respect to $S \subset X$ if the trisection stratifies $S$ into a triple of 3-balls as above.

\begin{theorem}
Let $S \subset X$ be a smoothly embedded $S^3$.  Then
\begin{enumerate}
    \item there exists a (relative) trisection of $X$ that is reducible with respect to $S$
    \item if $\cT$ is any (relative) trisection of $X$, it can be (interior) stabilized a finite number of times to obtain a (relative) trisection that is reducible with respect to $S$.
\end{enumerate}
\end{theorem}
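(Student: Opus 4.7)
The plan is to prove existence via a cut-and-glue construction and stable uniqueness via the Gay-Kirby theorem.

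\textbf{Part (1): Existence.} First I would observe that $S$ separates $X$ into two compact $4$-manifolds $X_+$ and $X_-$ with boundary $S^3$. Applying the Gay-Kirby relative trisection theorem yields relative trisections $\cT_\pm$ of $X_\pm$, and each such trisection induces an open book decomposition on the boundary. After suitable stabilizations---which correspond to Hopf-band plumbings of the induced open books---one can arrange that $\cT_+$ and $\cT_-$ induce isotopic open books on $S$. Gluing along this common structure then produces a trisection $\cT_S$ of $X$ whose central surface is $\Sigma = \Sigma_+ \cup_\delta \Sigma_-$, where $\delta$ is the image of the common binding. By construction, $\delta$ is a simple closed curve bounding a page disk in each handlebody $H_\lambda$ of $\cT_S$, hence a reducing curve, and the associated reducing $3$-sphere is $S$ itself.

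\textbf{Part (2): Stable uniqueness.} Given an arbitrary trisection $\cT$ of $X$, I would invoke Gay-Kirby's stable equivalence theorem applied to $\cT$ and the trisection $\cT_S$ from Part (1): there exist iterated interior stabilizations $\cT^{(k)}$ and $\cT_S^{(m)}$ related by a diffeomorphism $\phi$ of $X$. Since each interior stabilization can be performed inside a small disk in the central surface disjoint from the reducing curve $\delta \subset \Sigma_S$, the stabilized trisection $\cT_S^{(m)}$ is still reducible with respect to $S$. It follows that $\cT^{(k)} = \phi^{-1}(\cT_S^{(m)})$ is reducible with respect to the sphere $S' := \phi^{-1}(S)$. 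Finally, the isotopy extension theorem provides an ambient isotopy of $X$ carrying $S'$ to $S$; transporting $\cT^{(k)}$ along this isotopy yields a trisection that is a stabilization of $\cT$ (up to the standard equivalence of trisections) and is reducible with respect to $S$.

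\textbf{Main obstacle.} The crux of the argument is ensuring that $S'$ is smoothly isotopic to $S$ in $X$, so the isotopy-extension step can be carried out. For simply-connected $X$, this should follow from the strong form of Gay-Kirby uniqueness in which $\phi$ is isotopic to the identity. In the general setting, I would either prove a relative version of stable equivalence compatible with the fixed sphere $S$, or else proceed by a direct normal-form argument: isotope $S$ into generic position with respect to $\cT$, and iteratively stabilize $\cT$ to simplify the stratified intersection $S \cap \cT$ until $S$ cuts the trisection into the standard triple of $3$-balls.
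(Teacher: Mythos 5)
Your overall strategy for both parts matches the paper's approach, but your flagged ``main obstacle'' in Part (2) is not actually an obstacle, and resolving it is the whole point.

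For Part (1), the paper works with the connected-sum decomposition $X = X_1 \#_S X_2$, trisects each closed factor, and takes the connected sum of trisections along points in the central surfaces; the sphere $S$ then automatically sits as the standard reducing sphere. Your cut-and-glue version is a restatement of this, but a small correction: interior stabilizations do \emph{not} alter the induced open book on the boundary (the paper's Gay--Kirby citation explicitly keeps the boundary open book fixed). You do not need any stabilizations or Hopf plumbings to make the open books match; the cleanest route is to fix a common open book on $S^3$ (the trivial disk open book) first and then apply the relative Gay--Kirby existence theorem to each of $X_\pm$ with that boundary data prescribed.

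For Part (2), you treat the question of whether $S' = \phi^{-1}(S)$ is smoothly isotopic to $S$ as a genuine gap that might require simple-connectivity or a relative stable-equivalence theorem. But the Gay--Kirby uniqueness statement, as cited in the paper, asserts that after finitely many interior stabilizations two trisections become \emph{ambient isotopic}, not merely carried to one another by some diffeomorphism. So your $\phi$ is isotopic to the identity, $S'$ is ambient isotopic to $S$ by the trace of this isotopy, and no hypothesis on $\pi_1(X)$ is needed. Even more directly, you can skip $S'$ altogether: let $\phi_t$ be the ambient isotopy with $\phi_1(\cT^{(k)}) = \cT_S^{(m)}$; then $\cT_S^{(m)}$ is reducible with respect to $S$ (stabilizations of $\cT_S$ can be taken away from the reducing curve, as you observed and as the paper records in its lemma on stabilizations preserving reducibility), and $\cT_S^{(m)}$ is ambient isotopic to a stabilization of $\cT$, which is all the theorem asks for. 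The paper's remark on Budney--Gabai is a warning about the separate issue of the 3-balls inside each sector being well defined only up to diffeomorphism; it does not undermine the ambient-isotopy form of Gay--Kirby stable equivalence.
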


Finally, let $\Omega$ be a domain in a complex surface.  Recall that $P \subset \Omega$ is an {\it analytic polyhedron} if there exists holomorphic functions $f_1,\dots,f_k: \Omega \rightarrow \CC$ such that
\[P = \{z \in \Omega: |f_1(z)|, \dots, |f_k(z)| \leq 1\}\]
In particular, analytic polyhedra are pseudoconvex and therefore Stein domains.

\begin{definition}
Let $Z$ be a compact domain in a complex surface.  A {\it Stein trisection} of $Z$ is a (relative) trisection decomposition $Z = Z_1 \cup Z_2 \cup Z_3$ such that each $Z_{\lambda}$ is an analytic polyhedron.  Let $\cT$ denote the induced smooth trisection of $Z$.  In this case we say that $\cT$ {\it admits a Stein structure}.
\end{definition}

The notion of a {\it Stein trisection} of a complex surface was introduced in \cite{LM-Rational}, where it was shown that the standard trisection of $\mathbb{CP}^2$ is Stein \cite{LM-Rational}.  Zupan has recently constructed Stein trisections of hypersurfaces in $\mathbb{CP}^3$ of every degree.  The corresponding notion of a {\it Weinstein trisection} of a closed, symplectic 4-manifold was also introduced in \cite{LM-Rational}.  Every symplectic 4-manifold admits a Weinstein trisection \cite{LMS} and this construction was used to give a novel interpretation of the Symplectic Thom conjecture and the adjunction inequality in \cite{SympThom}.

The main trisections result of this paper is the following construction.

\begin{theorem}
Every standard relative trisection of $B^4$ admits a Stein structure.
\end{theorem}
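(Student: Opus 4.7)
The plan is to induct on the number $n$ of interior stabilizations used to produce $\cT$ from the trivial relative trisection $\cT_0$.

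For the base case $n=0$, I would realize $\cT_0$ directly as a Stein trisection of a copy of $B^4$ in $\CC^2$. Fix $R \ge 1$, let $\omega = e^{2\pi i/3}$, and for each $\lambda \in \{1,2,3\}$ define the bounded Voronoi cell
\[
V_\lambda = \{z_1 \in \CC : |z_1-\omega^\lambda| \le |z_1-\omega^{\lambda+1}|,\ |z_1-\omega^\lambda| \le |z_1-\omega^{\lambda-1}|,\ |z_1-\omega^\lambda| \le R\}.
\]
After rewriting the first two conditions as $|(z_1-\omega^\lambda)/(z_1-\omega^{\lambda\pm 1})| \le 1$, each $V_\lambda$ is an analytic polyhedron in $\Omega_\lambda = \CC \setminus \{\omega^{\lambda-1}, \omega^{\lambda+1}\}$. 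Setting $Z_\lambda = V_\lambda \times \{|z_2| \le 1\}$, the three cells $V_\lambda$ tile a topological disk meeting only at the origin, so the $Z_\lambda$ assemble into a domain diffeomorphic to $B^4$, with triple intersection the central disk $\{0\}\times\{|z_2|\le 1\}$ and pairwise intersections 3-balls. This exhibits $\cT_0$ as a Stein trisection.

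For the inductive step, assume $\cT$ admits a Stein structure $Z \subset \CC^2$ and let $\cT'$ be an interior stabilization of $\cT$ performed in a small 4-ball $U$ around an interior point of the central surface. Because the stabilization is supported in $U$, it suffices to replace the local $\cT_0$-model on $U$ by a Stein realization of the once-stabilized trisection $\cT_1$ with matching boundary data on $\partial U$, and then splice it back in. I would build a local $\cT_1$-model by adjoining to one piece $Z_\lambda$ an extra defining function: passing to the enlarged domain $\Omega_\lambda \setminus \{g=0\}$ for a suitable holomorphic $g$ with a zero in $Z_\lambda$, the inequality $|1/g| \le 1$ excises a small tubular region from $Z_\lambda$ and raises $k_\lambda$ by one, while coordinated enlargements of $Z_{\lambda\pm 1}$ fill the excised region and extend the central disk by a 1-handle to produce a once-punctured torus triple intersection.

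The hard part is the inductive step. First, the three local modifications inside $U$ must be coordinated so that the outcome is a genuine trisection of $U$ with the correct topology, i.e., pairwise intersections of genus one and a once-punctured torus triple intersection, rather than merely a topologically plausible decomposition. Second, the modified defining functions must interpolate across $\partial U$ to match the ambient Stein structure, so that each global $Z_\lambda$ remains an honest analytic polyhedron in its enlarged domain and plurisubharmonicity is preserved; this analytic gluing is the technical crux.
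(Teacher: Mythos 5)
Your base case is fine and broadly parallel to the paper's (the paper uses a polydisk $Q_M$ trisected by three linear plurisubharmonic functions $\phi_\lambda$ rather than Voronoi cells, but both realize $\cT_0$ as a union of analytic polyhedra). The problem is the inductive step, and you have correctly diagnosed exactly why it is hard: you need to coordinate three local modifications of defining functions inside a small ball $U$ \emph{and} interpolate the modified defining functions across $\partial U$ so that each $Z_\lambda$ remains an honest analytic polyhedron globally. You have not actually done either of these things --- the proposal names them as ``the technical crux'' and stops. In particular it is unclear that a single extra inequality $|1/g|\le 1$ on one sector, combined with unspecified ``coordinated enlargements'' of the other two, produces the correct topology on all three double intersections and the central surface simultaneously, and the interpolation across $\partial U$ while preserving plurisubharmonicity and the polyhedron structure is precisely the kind of analytic gluing problem that tends not to have local solutions. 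As written, this is a genuine gap.

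The paper avoids the local-gluing problem entirely by a different mechanism. Rather than modifying a given Stein trisection in place, it builds the stabilized trisection \emph{upstairs} as the preimage of the fixed standard Stein trisection of $Q_M$ under a holomorphic simple branched covering $p:\widetilde X\to Q_M$. Two facts make this work. First (Theorem~\ref{thrm:hol-branched-pullback-tri}), if the branch locus is a holomorphic curve in relative bridge position, then each sector pulls back to an analytic polyhedron (defined by the compositions $f_i\circ p$), so the pullback decomposition is automatically a Stein trisection --- no gluing is needed because a single global holomorphic map transports the whole structure at once. Second (Theorem~\ref{thrm:bridge-branch-tri} together with Proposition~\ref{prop:holomorphic-pleating}), a sector-$\lambda$ \emph{bridge perturbation} of the branch locus downstairs induces a single interior stabilization of the pullback trisection upstairs, and this perturbation can be realized holomorphically by replacing a linear graph $\Gamma_\epsilon=(-i\epsilon z,z)$ with a cubic graph $\cC=(i\epsilon(z^3-z),z)$ --- the ``pleating'' move. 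So to realize $n_1+n_2+n_3$ stabilizations, one takes the $(n+1)$-fold branched cover of $Q_M$ over $n$ disjoint holomorphic disks in bridge position (which recovers the unstabilized $\cT_0$, by Corollary~\ref{cor:infinite-stein}) and then pleats $n_\lambda$ of those disks in each sector. If you want to salvage your inductive strategy, the missing idea is this change of viewpoint: do not edit the Stein structure locally, but rather edit the \emph{branch locus} of a global holomorphic covering, where staying holomorphic is a pointwise condition rather than a gluing problem.
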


Reverting back to the study of Schoenflies 4-balls in $\CC^2$, we have the following immediate corollary of the previous two theorems.

\begin{corollary}
Let $B \subset B^4 \subset \CC^2$ be a homotopy 4-ball bounded by a smoothly embedded $S^3$.  
\begin{enumerate}
\item there exists a relative trisection of $B^4$ that is reducible with respect to $\del B$,
\item there exists a relative Stein trisection of $B^4$ that is reducible with respect to $\del B$.
\end{enumerate}
\end{corollary}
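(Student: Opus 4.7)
The plan is to combine the two preceding theorems in the natural way; the only subtlety is choosing the starting trisection so that the Stein structure theorem is applicable.

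For part (1), I would apply the theorem on reducibility directly, with $X = B^4$ and $S = \del B$. Since $\del B$ is by hypothesis a smoothly embedded $S^3$ in the ambient $B^4 \subset \CC^2$, part (i) of that theorem produces a relative trisection of $B^4$ that is reducible with respect to $\del B$. There is nothing further to do.

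For part (2), the key observation is that we need a \emph{standard} relative trisection of $B^4$, since the Stein structure theorem in the excerpt is stated only for standard relative trisections (those obtained from the genus-0 trisection $\cT_0$ by interior stabilizations). I would therefore begin with $\cT_0$ itself rather than an arbitrary relative trisection of $B^4$. Applying part (ii) of the reducibility theorem to $\cT_0$ with respect to the embedded sphere $\del B$, I obtain a finite sequence of interior stabilizations producing a relative trisection $\cT$ that is reducible with respect to $\del B$. By the definition of standard given just before the Stein trisection subsection, $\cT$ is standard. Now I apply the theorem that every standard relative trisection of $B^4$ admits a Stein structure, yielding a relative Stein trisection of $B^4$ which is simultaneously reducible with respect to $\del B$.

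The only step requiring care is the compatibility: the reducibility claim (ii) preserves the property of being obtained from $\cT_0$ by interior stabilizations because the operation it uses is precisely interior stabilization, so standardness is automatically inherited from $\cT_0$. In particular, no other non-standardizing modification (such as the general trisection moves) is introduced along the way. The hardest underlying input, of course, is the Stein structure theorem itself, but here it is used as a black box; the corollary is then a matter of sequencing the two results in the right order and beginning from $\cT_0$.
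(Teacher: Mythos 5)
Your argument is correct and is exactly the intended one: the paper states this as an ``immediate corollary of the previous two theorems,'' and your proof is precisely that chaining, with the right (and worth-stating) observation that one should start from $\cT_0$ so the stabilized trisection is standard and the Stein structure theorem applies.
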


\subsection{Reconstruction}

Using this result, we can describe the reimbedding construction of Theorem \ref{thrm:pseudoconvex-reimbedding}.  Let $B \subset B^4 \subset \CC^2$ be a homotopy 4-ball with smooth boundary.  We can choose a Stein trisection $B^4 = X_1 \cup X_2 \cup X_3$ that is reducible with respect to $\del B$.  The boundary $\del B$ intersects each sector along a 3-ball.  Applying a result of Bedford-Klingenberg \cite {BK-filling}, we can fill by holomorphic disks to replace by a Levi-flat 3-ball $D_{\lambda}$ that is foliated by holomorphic disks.  This determines a new pseudoconvex sector $Z_{\lambda}$ that is diffeomorphic to $X_{\lambda} \cap B$.  In particular, the union $Z = Z_1 \cup Z_2 \cup Z_3$ is a trisected 4-manifold with boundary $S^3$.  The trisected 4-manifolds $B$ and $Z$ have the exact same spines, hence they are diffeomorphic.

This reimbedding depends only on the existence of a Stein trisection, as the result of Bedford and Klingenberg applies in all Stein surfaces.  Work in progress by the author with Islambouli, Meier and Starkston constructs Stein trisections on all compact Stein surfaces.  Consequently, this reimbedding result should extend to all homotopy 4-balls in compact Stein surfaces.  We therefore make the following conjecture, which extends the smooth 4-dimensional Schoenflies conjecture and can be viewed as analogous to a conjecture of Gompf on minimal symplectic 4-manifolds  \cite{Gompf}.

\begin{conjecture}
Let $X$ be a compact Stein domain of complex dimension 2 and let $B \subset X$ be an embedded homotopy 4-ball bounded by a smoothly embedded $S^3$.  Then $B$ is a standard 4-ball.
\end{conjecture}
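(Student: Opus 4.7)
The plan is to execute the Stein trisection plus Shilov boundary program of the paper in the more general setting of a compact Stein surface $X$. Specifically, the strategy is to first construct a Stein reimbedding $Z \cong B$ inside $X$ using a relative Stein trisection of $X$ reducible with respect to $\del B$ combined with Bedford--Klingenberg filling, and then verify the Shilov boundary hypothesis so that Theorem \ref{thrm:Shilov} applies.

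First, I would invoke the forthcoming construction (announced in the introduction, joint with Islambouli, Meier, and Starkston) of Stein trisections on all compact Stein surfaces, together with the relative version of the reducibility theorem. After a finite sequence of interior stabilizations, we obtain a Stein trisection $X = X_1 \cup X_2 \cup X_3$ that is reducible with respect to $\del B$. In particular, each $X_\lambda \cap \del B$ is a 3-ball. Applying Bedford--Klingenberg filling by holomorphic disks within each Stein sector $X_\lambda$ replaces this 3-ball by a Levi-flat 3-ball $D_\lambda$ foliated by holomorphic disks, producing a pseudoconvex sector $Z_\lambda$ diffeomorphic to $X_\lambda \cap B$. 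The union $Z = Z_1 \cup Z_2 \cup Z_3$ is then a trisected domain diffeomorphic to $B$ with spine $\cS = V_1 \cup V_2 \cup V_3$ a real codimension-1 stratified complex.

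The heart of the argument is to verify that $\cS$ is the Shilov boundary of $Z$ in some Stein neighborhood $\Omega \subset X$. Since each $Z_\lambda$ is pseudoconvex, the maximum of any $|f|$ with $f \in \cO(\Omega)$ over $\overline{Z_\lambda}$ is attained on $\del Z_\lambda$. Moreover, each Bedford--Klingenberg piece $D_\lambda$ is foliated by holomorphic disks whose boundaries lie in the 3-dimensional spine pieces, so the one-dimensional maximum principle propagates $|f|_\cS$ across each disk and forces the maximum of $|f|$ over $D_\lambda$ to lie on $\cS$. Coherently combining these estimates across the three sectors should then give $|f|_Z = |f|_\cS$. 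Once this Shilov boundary property is established, Theorem \ref{thrm:Shilov} implies that $Z$, and hence $B$, is diffeomorphic to the standard 4-ball.

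The main obstacle is precisely the Shilov boundary step, which is the substance of Question \ref{q:hulls-envelope}. Propagating the maximum modulus disk by disk within a single Levi-flat face is classical, but there is no a priori reason these estimates match across the 3-dimensional spine pieces $H_\lambda$ where two Levi-flat faces meet, or along the central surface $\Sigma$ where all three meet. Resolving this likely requires a topological criterion extracted from the trisection diagram --- as suggested in the paper after Theorem \ref{thrm:Shilov} --- that rules out the envelope of holomorphy of $Z$ strictly containing $Z$. A natural route is to upgrade the construction so that each $Z_\lambda$ is realized as a genuine analytic polyhedron cut out by holomorphic functions on $\Omega$ whose common level sets recover the spine; maximum modulus would then be tautological, and the conjecture would follow.
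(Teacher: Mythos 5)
The statement you are proving is stated in the paper as a \emph{conjecture}: the paper offers no proof of it, and explicitly identifies the step you call ``the heart of the argument'' as the open problem (it is the content of Question~\ref{q:hulls-envelope} and of the remark following Theorem~\ref{thrm:Shilov} asking for criteria that ensure $\cS$ is the Shilov boundary). Your proposal reproduces the paper's program --- reducible Stein trisection, Bedford--Klingenberg filling, Theorem~\ref{thrm:Shilov} --- and then, to your credit, flags the Shilov step as the obstacle; but that means the proposal is a research plan, not a proof. Two of its inputs are also not available in the paper: the existence of reducible Stein trisections on arbitrary compact Stein surfaces is only announced as work in progress (the paper proves the Stein statement only for standard relative trisections of $B^4 \subset \CC^2$), and Theorem~\ref{thrm:pseudoconvex-reconstruction} is what licenses the reimbedding, not a general-position argument.

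The concrete reason your maximum-modulus propagation does not close the gap is that the Levi-flat faces of $\del Z$ are not, as constructed, unions of \emph{closed} holomorphic disks with boundaries on $\cS$. Bedford--Klingenberg is applied after a $C^2$-small perturbation of $S_\lambda$ and inside an outer strictly pseudoconvex approximation $\widetilde{X}_\lambda \supset X_\lambda$; the resulting foliated 3-ball has its disk boundaries on $\del \widetilde{X}_\lambda$, and $\widetilde{D}_\lambda$ is obtained only after intersecting with $X_\lambda$. The truncated disks then have boundary arcs lying on the trisection walls $H_\lambda$, $H_{\lambda+1}$ rather than on $V_\lambda \cup V_{\lambda+1}$, so the one-variable maximum principle pushes $|f|$ to those walls, not to $\cS$. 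Worse, the three fillings are performed independently, so the perturbed spheres need not share the disks $V_\lambda$ on the nose and the sectors need not glue along a common spine; ``$\cS$ is the Shilov boundary'' is therefore not even a well-posed hypothesis until this matching is arranged. Finally, even if the propagation worked, it would only show that $\cS$ \emph{contains} the Shilov boundary (i.e., is a boundary for $Z$), whereas the proof of Theorem~\ref{thrm:Shilov} uses peaking functions at every point of $\cS$; upgrading containment to the peak-point property is an additional analytic step. Your closing suggestion --- realize each $Z_\lambda$ as a genuine analytic polyhedron whose defining functions cut out the spine --- is a reasonable direction, but it is precisely the unproven part.
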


\subsection{Acknowledgements}

I would like to thank David Gay, Gordana Matic, Jeff Meier, Laura Starkston and Alex Zupan for helpful conversations.  This work is partially supported by NSF grant DSM-1664567.

\section{Trisections}
\label{sec:trisections}

Trisections are 4-dimensional generalization of Heegaard splittings of 3-manifolds.  In this section, we will describe the basic terminology and background results of the theory.  At this point, there is quite a voluminous literature defining the basic building blocks of the theory:
\begin{enumerate}
    \item trisections of 4-manifolds \cite{Gay-Kirby}
    \item relative trisections of 4-manifolds with boundary \cite{CGP1,CGP2}
    \item bridge trisections of surfaces \cite{MZ1,MZ2}
    \item constructing branched coverings via surfaces in bridge position \cite{LM-Rational,LMS}
    \item bridge trisections for properly embedded surfaces in 4-manifolds with boundary \cite{Meier-Relative}
\end{enumerate}

The two key ideas we exploit in this paper are:
\begin{enumerate}
    \item Trisections pull back under branched coverings when the branch locus is in bridge position.  This has been used to construct trisections of projective surfaces \cite{LM-Rational} and symplectic 4-manifolds \cite{LMS} and will be used in Section \ref{sec:Stein} to construct Stein trisections.
    \item In a simple branched covering, an (interior) stabilization of the branch locus downstairs results in an (interior) stabilization of the trisection upstairs.
\end{enumerate}

\subsection{Trisections}

Let $X$ be a closed, smooth 4-manifold.

\begin{definition}
A $(g;k_1,k_2,k_3)$-{\it trisection} $\cT$ of $X$ is a decomposition $X = Z_1 \cup Z_2 \cup Z_3$ such that for each $\lambda \in \ZZ_3$,
\begin{enumerate}
\item Each $Z_{\lambda}$ is diffeomorphic to $\natural_{k_{\lambda}} (S^1 \times B^3)$ for $k_{\lambda} \geq 0$,
\item Each double intersection $H_{\lambda} = Z_{\lambda-1} \cap Z_{\lambda}$ is a 3-dimensional, genus $g$ handlebody, and
\item the triple intersection $\Sigma = Z_1 \cap Z_2 \cap Z_3$ is a closed, genus $g$ surface.
\end{enumerate}
We refer to $\Sigma$ as the {\it central surface} of the trisection, the genus $g$ of $\Sigma$ as the {\it genus} of the trisection, and the union $H_1 \cup H_2 \cup H_3$ as the {\it spine} of the trisection.
\end{definition}

A trisection of $X$ can be encoded by a {\it trisection diagram} that describes how each $H_{\lambda}$ is attached to $\Sigma$ \cite[Definition 3]{Gay-Kirby}.  In particular, since $H_{\lambda}$ is a 1-handlebody, the gluing is determined by how a complete system of compressing disks is attached to $\Sigma$.  These compressing disks are attached along a cut system of curves in $\Sigma$ and these curves can be depicted on $\Sigma$ itself.

\subsection{Trisections of spheres}

The $n$-sphere has a standard trisection, obtained by pulling back a trisection on the unit disk $D$ in $\mathbb{R}^2$.  

\begin{definition}
The {\it standard trisection} $D = D^s_1 \cup D^s_2 \cup D^s_3 \subset \RR^2$ of the unit disk is defined by choosing the following subsets (in radial coordinates):
\begin{align*}
D^s_1 &= \left\{ 0 \leq \theta \leq \frac{2\pi}{3} \right\}, & D^s_2 &= \left\{\frac{2\pi}{3} \leq \theta \leq \frac{4\pi}{3} \right\}, &  D^s_3 &= \left\{ \frac{4\pi}{3} \leq \theta \leq 2 \pi \right\}.
\end{align*}
\end{definition}
It is symmetric under rotation by $2\pi/3$.

\begin{definition}[\cite{LC-Miller}]
\label{def:standard-tri-sphere}
The {\it standard trisection} of $S^{k}$ for $k\ge 2$ 
is the decomposition $\cT_{std} = \{\pi^{-1}(D^s_i) \cap S^{k}\}$ where $\pi: \mathbb{R}^{k+1} \rightarrow \mathbb{R}^2$ is a coordinate projection and $D^s_1 \cup D^s_2 \cup D^s_3$ is the standard trisection of the unit disk in $\mathbb{R}^2$.
\end{definition}

\begin{example}
\textit{The standard trisection of $S^2$}.  The triple intersection of $\cT_{std}$ consists of the North and South poles; the double intersections are three meridians connecting the poles; and the sectors are the three bigons bounded by the pairs of meridians.
\end{example}

\begin{example}
\label{ex:std-tri-s3}
\textit{The standard trisection of $S^3$}.  The triple intersection consists of a single $S^1$, unknotted in $S^3$; each double intersection is a $D^2$ Seifert disk for the unknot; and each sector of the trisection is a 3-ball.  In addition, we can give two other useful interpretations:
\begin{enumerate}
\item The lens space $L(1,3)$.  Here, $S^3$ is decomposed into three lenses: i.e take $D^2 \times [0,1]$ and collapse the vertical boundary to a circle.  These lenses are glued sequentially along their boundaries.
\item The genus 0 open book decomposition of $S^3$. Recall that $S^3$ has an open book decomposition $(B,\phi)$ where the binding $B$ is the unknot and the fibration $\phi: S^3 \smallsetminus \nu(B) \cong S^1 \times D^2 \rightarrow S^1$ is trivial.  This induces a trisection as follows.  Choose any three distinct pages $\left\{\phi^{-1}(\theta_1),\phi^{-1}(\theta_2), \phi^{-1}(\theta_3)\right\}$ of the open book decomposition.  Their union becomes the spine of the trisection and the binding $B$ is the codimension 2 stratum of the trisection.  The complement of the spine consists of three open 3-balls.
\end{enumerate}
\end{example}

The following lemma is immediate from the definition.

\begin{lemma}
\label{lemma:std}
Let $\cT_{std} = (S_1,S_2,S_3)$ be the standard trisection of $S^{k}$.  Then each $S_i$ is diffeomorphic to $B^{k}$; each double intersection $S_i \cap S_j$ is diffeomorphic to $B^{k-1}$ and the central surface is diffeomorphic to $S^{k-2}$.

In particular, the standard trisection of $S^4$ is a $(0;0)$-trisection.
\end{lemma}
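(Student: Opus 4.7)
The plan is to unpack the definitions and read off each stratum as a preimage under $\pi$. Decompose $\RR^{k+1}=\RR^2\oplus\RR^{k-1}$, so that $\pi$ is projection onto the first factor and a point $p\in S^k$ can be written as $p=(w,y)$ with $w\in\RR^2$, $y\in\RR^{k-1}$, and $|w|^2+|y|^2=1$. The central surface is $S_1\cap S_2\cap S_3=\pi^{-1}(0)\cap S^k=\{0\}\times S^{k-2}\cong S^{k-2}$. Each double intersection $S_i\cap S_{i+1}$ is the preimage of the common radial edge $L\subset\partial D^s_i\cap\partial D^s_{i+1}$; after rotating so $L$ lies along the positive $x_1$-axis, this preimage is $\{(x_1,0,y)\in S^k:x_1\ge 0\}$, a closed hemisphere of the unit $(k-1)$-sphere in $\RR^k$, hence diffeomorphic to $B^{k-1}$.

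For the sectors I would use the global parametrization $p=(\cos\phi\cdot u,\ \sin\phi\cdot v)$ with $\phi\in[0,\pi/2]$, $u\in S^1$, $v\in S^{k-2}$, where $u$ is arbitrary when $\phi=\pi/2$ and $v$ is arbitrary when $\phi=0$. Under $\pi$, such a point has image $\cos\phi\cdot u\in D^s_i$ iff $\phi=\pi/2$ or $u\in A_i$, where $A_i\subset S^1$ is the arc of angles in $[\theta_{i-1},\theta_i]$. This identifies $S_i$ with the topological join $A_i*S^{k-2}\cong I*S^{k-2}\cong\{\mathrm{pt}\}*B^{k-1}\cong B^k$, and identifies $S_i\cap S_{i+1}$ analogously with $\{u_{i+1}\}*S^{k-2}\cong B^{k-1}$, where $u_{i+1}$ is the common endpoint of $A_i$ and $A_{i+1}$.

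The main technical point is upgrading this join identification to a smooth diffeomorphism, since the parametrization degenerates along the central $S^{k-2}$ (where $\arg(u)$ is undefined) and along the equatorial $S^1=\{y=0\}\cap S^k$ (where $v$ is undefined). These loci carry the manifold-with-corners structure of $S_i$, whose edges pull back from the three corners of the closed sector $D^s_i$. After standard corner rounding the join identification becomes a smooth diffeomorphism onto the standard $B^k$. Low-dimensional instances make the geometric content concrete: for $k=2$, $S_i$ is a bigon in $S^2$; for $k=3$, $S_i$ is the lens-shaped $3$-ball of Example \ref{ex:std-tri-s3}.

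For $S^4$ the closing claim is immediate: $\Sigma\cong S^2$ has genus $g=0$, and each $S_i\cong B^4=\natural_0(S^1\times B^3)$ forces $k_1=k_2=k_3=0$, so $\cT_{std}$ is a $(0;0)$-trisection of $S^4$.
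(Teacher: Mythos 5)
Your proposal is correct. The paper offers no argument for this lemma --- it simply states that it is ``immediate from the definition'' --- so there is no proof to diverge from; your direct verification via the join decomposition $S^k = S^1 * S^{k-2}$ (central surface $\{0\}\times S^{k-2}$, double intersections as hemispheres of $S^{k-1}$, sectors as $A_i * S^{k-2} \cong B^k$) is exactly the computation the author is leaving to the reader, and your remark that the join parametrization degenerates along the corner loci, requiring corner rounding for a genuine diffeomorphism, is the one point worth making explicit.
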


\begin{example}
\label{ex:unbalanced-S4}
$S^4$ also admits an unbalanced $(1;1,0,0)$-trisection.  It can be described by a trisection diagram on $T^2$ with attaching curves $\alpha_1,\alpha_2$ parallel and the third attaching curve $\alpha_3$ geometrically dual to both.  This implies that $Y_1 = H_1 \cup -H_2$ is homeomorphic to $S^1 \times S^2$ and bounds $S^1 \times B^3$, while $Y_2 = H_2 \cup - H_3$ and $Y_3 = H_3 \cup -H_1$ are homeomorphic to $S^3$ and bound $B^4$.  By permuting the indices, we also obtain unbalanced $(1;0,1,0)$ and $(1;0,0,1)$-trisections of $S^4$.
\end{example}

\subsection{Connected sum and stabilization}

\begin{definition}
\label{def:connected-sum}
Let $X$ and $Y$ be closed, smooth, oriented 4-manifolds with trisections $\cT_X,\cT_Y$ given by the decompositions:
\[X = X_1 \cup X_2 \cup X_3 \qquad \text{and} \qquad Y = Y_1 \cup Y_2 \cup Y_3\]
The {\it connected sum} of $\cT_X$ and $\cT_Y$ is the trisection $\cT_Z$ of $Z = X \# Y$ obtained by performing the connected sum along points $x \in \Sigma_X \subset X$ and $y \in \Sigma_Y \subset Y$ in the central surfaces of the trisections.  Each sector is precisely $Z_{\lambda} =  X_{\lambda} \natural Y_{\lambda}$.  If $X$ has a $(g;k_1,k_2,k_3)$-trisection and $Y$ has a $(h;l_1,l_2,l_3)$-trisection, then this construction gives a $(g+h;k_1+l_1,k_2+l_2,k_3+l_3)$-trisection of $Z$.
\end{definition}

Recall from Example \ref{ex:unbalanced-S4} that $S^4$ admits an (unbalanced) trisection of genus 1, where the sector $X_{\lambda} \cong S^1 \times B^3$ and the remaining sectors are 4-balls.  Let $\cT_{\lambda}$ denote this trisection.

\begin{definition}
A {\it sector-$\lambda$ stabilization} of a trisection $\cT$ of $X$ is the trisection $\cT'$ obtained by taking the connected sum of $\cT$ with the trisection $\cT_{\lambda}$ of $S^4$
\end{definition}

Stabilization increases the trisection genus by 1.  Note that in \cite[Definition 8]{Gay-Kirby}, stabilizations are assumed to be {\it balanced} and increase the genus by 3;  however this operation is equivalent to three unbalanced stabilizations, one in each sector.

\begin{remark}
There is an equivalence between trisection decompositions and handle decompositions \cite[Section 4]{Gay-Kirby}.  Under this equivalence, an unbalanced stabilization of a trisection translates to adding a pair of canceling handles.
\end{remark}

\begin{theorem}[\cite{Gay-Kirby}] Let $X$ be a closed, smooth, oriented 4-manifold.  Then
\begin{enumerate}
\item $X$ admits a trisection,
\item any two trisections of $X$ become ambient isotopic after sufficiently many stabilizations, and
\item $X$ is determined up to diffeomorphism by the spine of any trisection $\cT$.
\end{enumerate}
\end{theorem}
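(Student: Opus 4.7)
The plan is to attack the three statements in order, with the first two via Morse 2-functions and the third via Laudenbach-Po\'enaru. Throughout, I would rely on the well-known correspondence between trisections and (generic) maps $f: X \to \RR^2$ whose singular locus consists of indefinite folds and cusps, arranged so that $f$ trisects $X$ over the standard trisection of the disk $D \subset \RR^2$.

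For existence (1), I would start from a self-indexing Morse function $h: X \to [0,4]$ coming from a handle decomposition with one $0$-handle, $k_1$ $1$-handles, some number of $2$-handles, $k_3$ $3$-handles, and one $4$-handle. Set $Z_1 := h^{-1}([0,3/2])$ and $Z_3 := h^{-1}([5/2,4])$; these are $1$-handlebodies $\natural_{k_i}(S^1 \times B^3)$ by construction. The task is to carve $Z_2$ out of the middle cobordism $h^{-1}([3/2,5/2])$, whose attaching and belt circles for the $2$-handles must be isotoped onto a common Heegaard surface $\Sigma$ of $\partial Z_1$. This is where extra stabilization may be needed: genericity and standard handle-slide arguments let us arrange the attaching link of the $2$-handles, and its dual, to sit on a single genus-$g$ surface $\Sigma$. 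Then $Z_2$ is obtained by flowing $\Sigma$ through the middle level, and the three pieces fit together into a trisection. Equivalently, one can write this argument intrinsically by upgrading $h$ to a Morse 2-function into $\RR^2$ and arranging its fold locus into the standard circular form by moves from Gay--Kirby's calculus of fold singularities (cusp merges, wrinkles, unsinkings).

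For uniqueness up to stabilization (2), I would pass to the space of Morse 2-functions on $X$. Two trisections $\cT_0$ and $\cT_1$ correspond to two such maps $f_0, f_1$ in standard form, and one connects them by a generic path $f_t$. Along this path, the singular data undergoes a finite list of codimension-one catastrophes (births/deaths of eye/cusp pairs, swallowtails, merges). The combinatorial heart of the proof is showing that each catastrophe either preserves the trisection up to isotopy, or changes it by an (unbalanced) stabilization, perhaps after first introducing stabilizations on both sides to bring $\cT_0$ and $\cT_1$ into a common frame. This is the direct 4-dimensional analogue of the Reidemeister--Singer theorem for Heegaard splittings, and I expect it to be the main obstacle: the singularity theory of paths of Morse 2-functions is considerably more delicate than that of paths of Morse functions, and one must genuinely use the extra flexibility offered by unbalanced stabilization in each of the three sectors.

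For the rigidity statement (3), I would argue that the spine $H_1 \cup H_2 \cup H_3 \cup \Sigma$ determines $X$ because each sector $Z_\lambda$ is glued back in uniquely. Indeed, $\partial Z_\lambda = H_{\lambda} \cup_\Sigma H_{\lambda+1}$ is the standard genus-$g$ Heegaard splitting of $\#_{k_\lambda}(S^1 \times S^2)$, and by Laudenbach--Po\'enaru every self-diffeomorphism of $\#_{k_\lambda}(S^1 \times S^2)$ extends over $\natural_{k_\lambda}(S^1 \times B^3)$. Hence the diffeomorphism type of the filling, and of its gluing to the spine, is unique. Assembling the three unique fillings yields a well-defined closed 4-manifold diffeomorphic to $X$, completing the proof.
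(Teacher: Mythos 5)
This statement is quoted from Gay--Kirby and the paper offers no proof of it at all --- the only related remark in the text is the earlier observation that, by Laudenbach--Po\'enaru, the four-dimensional sectors glue to the spine uniquely up to diffeomorphism, which is exactly your argument for part (3) and is correct as you state it (using additionally Waldhausen's theorem that Heegaard splittings of $\#_k(S^1\times S^2)$ are standard, so that $\partial Z_\lambda$ is identified with the boundary of $\natural_{k_\lambda}(S^1\times B^3)$ in the first place). Your outlines of (1) and (2) faithfully reproduce the strategy of the cited source: existence from a handle decomposition by placing the framed attaching link of the $2$-handles on a (sufficiently stabilized) Heegaard surface of $\partial(\text{0- and 1-handles})$ with surface framing, and uniqueness via Cerf theory for Morse $2$-functions. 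One small imprecision in (1): what must be arranged is that the attaching link lies on the Heegaard surface with framing equal to the surface framing (after which the fact that $X_3$ is a $1$-handlebody is checked from the dual handle decomposition); the ``belt circles'' do not need to be separately isotoped, and no stabilization of the resulting trisection is required for existence, only a choice of large enough Heegaard genus.

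The one place where your proposal is a plan rather than a proof is part (2). The classification of codimension-one catastrophes in generic homotopies of indefinite Morse $2$-functions, and the verification that each one is absorbed by unbalanced stabilizations, is not a routine adaptation of Reidemeister--Singer: it is the technical core of Gay--Kirby's paper and rests on their separate work on existence and uniqueness of indefinite fibrations. You correctly identify this as the main obstacle, but as written you defer it rather than supply it, so your argument for (2) establishes only the shape of the proof. Since the paper under review likewise takes the theorem on faith from the literature, this does not affect anything downstream, but it should be flagged as the point at which your write-up stops being self-contained.
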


\begin{remark}
All of the results in \cite{Gay-Kirby} assume {\it balanced} trisections, meaning $k_1 = k_2 = k_3$, although in practice it is often useful to consider unbalanced trisections.  It is always possible to balance a trisection by stabilization, so the distinction is irrelevant here.
\end{remark}

\subsection{Relative trisections}

Let $Z_0 = B^4$ and $Z_k = \natural_k S^1 \times B^3$ and let $Y_k = \del Z_k$.  For any integers $r,s$ let $\Sigma_{s,t}$ denote the compact, oriented surface of genus $s$ and $t$ boundary components.  For any integers $r,s,t$, let $H_{r,s,t}$ denote a {\it compression body} cobordism between $\Sigma_{r,t}$ and $\Sigma_{s,t}$.  In particular, if $r = s$, then $H_{r,s,t} = \Sigma_{r,t} \times [0,1]$ and if $r > s$, then $H_{r,s,t}$ is obtained from $H_{r,r,t}$ by attaching $r - s$ 3-dimensional 2-handles.  

Let $g,p,b$ be nonnegative integers with $g \geq p$ and $b \geq 1$.  Following ?, there is a decomposition
\[Y_k = Y^-_{g,k;p,b} \cup Y^0_{g,k;p,b} \cup Y^+_{g,k;p,b}\]
where
\begin{align*}
Y^+_{g,k,p,b} & \cong H_{g,p,b} \\
Y^0_{g,k,p,b} & \cong \Sigma_{p,b} \times [0,1] \\
Y^-_{g,k,p,b} & \cong H_{g,p,b}
\end{align*}

\begin{definition}
Let $X$ be a compact, connected, oriented 4-manifold with connected boundary $Y$.  A $(g,k_1,k_2,k_3;p,b)$-{\it relative trisection} of $X$ is a decomposition
\[X = X_1 \cup X_2 \cup X_3\]
where
\begin{enumerate}
\item for each $\lambda = 1,2,3$, there is a diffeomorphism $\phi: X_{\lambda} \rightarrow Z_{k_{\lambda}} = \natural_{k_{\lambda}} S^1 \times B^3$ 
\item for each $\lambda = 1,2,3$ and taking indices mod 3, we have
\begin{align*}
\phi(X_{\lambda} \cap X_{\lambda - 1}) &= Y^-_{g,k_{\lambda},p,b} \\
\phi(X_{\lambda+1} \cap X_{\lambda}) &= Y^+_{g,k_{\lambda},p,b} \\
\phi(X_{\lambda} \cap \del X) &+ Y^0_{g,k_{\lambda},p,b}
\end{align*}
\end{enumerate}
\end{definition}

Analogously to the closed case, we define $H_{\lambda} = X_{\lambda} \cap X_{\lambda-1}$, which is a compression body.  

It follows from the construction (e.g. \cite[Lemma 11]{CGP1}) that a relative trisection induces an open book decomposition of $\del X$.  As in the closed case, we refer to $\Sigma = X_1 \cap X_2 \cap X_3$ as the {\it central surface}, the genus of $\Sigma$ as the {\it genus} of the trisection, and $H_! \cup H_2 \cup H_3$ as the {\it spine} of the trisection.

\begin{definition}
\label{def:std-tri-b4}
The {\it standard trisection} $\cT_{std}$ of $B^k$ for $k \geq 2$ is the decomposition $B^k = X_1 \cup X_2 \cup X_3$ where $X_{\lambda} = \pi^{-1}(D^s_{\lambda})$, where $\pi: \RR^k \rightarrow \RR^2$ is a coordinate projection, and $D = D^s_1 \cup D^s_2 \cup D^s_3$ is the standard trisection of the 2-disk.
\end{definition}

\begin{lemma}
Let $B^k = X_1 \cup X_2 \cup X_3$ be the standard trisection of $B^k$.  Then $X_{\lambda} \cap X_{\lambda+1}$ is diffeomorphic to $B^{k-1}$; the triple intersection $X_1 \cap X_2 \cap X_3$ is diffeomorphic to $B^{k-2}$; and $X_{\lambda} \cap \del B^k$ is diffeomorphic to $B^{k-1}$.

In particular, the standard trisection of $B^4$ is a $(0,0;0,1)$ relative trisection.
\end{lemma}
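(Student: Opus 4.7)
The plan is to reduce every intersection to a direct calculation in the 2-disk and pull back via the coordinate projection $\pi: \RR^k \rightarrow \RR^2$. Writing points of $\RR^k = \RR^2 \oplus \RR^{k-2}$ as $(y,z)$, we have
\[X_\lambda = \pi^{-1}(D^s_\lambda) \cap B^k = \left\{ (y,z) : y \in D^s_\lambda, \ |y|^2 + |z|^2 \leq 1\right\},\]
so each intersection $X_{\lambda_1} \cap \dots \cap X_{\lambda_r}$ is controlled by the corresponding intersection $D^s_{\lambda_1} \cap \dots \cap D^s_{\lambda_r}$ of sectors in the disk.

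The first observation is that $D^s_\lambda \cap D^s_{\lambda+1}$ is a single radial segment from the origin to the unit circle and that $D^s_1 \cap D^s_2 \cap D^s_3 = \{0\}$. After rotating in $\RR^2$, the segment may be taken to be $\{(t,0) : 0 \leq t \leq 1\}$, so
\[X_\lambda \cap X_{\lambda+1} = \{(t,0,z) \in \RR \times \RR \times \RR^{k-2} : t \geq 0, \ t^2 + |z|^2 \leq 1\},\]
which is a closed half-ball of dimension $k-1$, hence diffeomorphic to $B^{k-1}$. Similarly, the triple intersection is $\pi^{-1}(0) \cap B^k = \{(0,0,z) : |z|^2 \leq 1\} \cong B^{k-2}$. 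Finally, $X_\lambda \cap \del B^k = \pi^{-1}(D^s_\lambda) \cap S^{k-1}$ is by Definition \ref{def:standard-tri-sphere} precisely a sector of the standard trisection of $S^{k-1}$, so by Lemma \ref{lemma:std} it is diffeomorphic to $B^{k-1}$.

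For the ``in particular'' statement, I will verify the relative trisection parameters $(g,k_\lambda;p,b) = (0,0;0,1)$ one by one when $k=4$. The central surface is $X_1 \cap X_2 \cap X_3 \cong B^2$, which has genus $g=0$ and $b=1$ boundary component. Each disk sector $D^s_\lambda$ has angular width $2\pi/3 < \pi$, hence is convex; therefore $X_\lambda$ is the intersection of the convex sets $\pi^{-1}(D^s_\lambda)$ and $B^4$, so it is a compact convex body with nonempty interior, diffeomorphic to $B^4 = \natural_0 (S^1 \times B^3)$, giving $k_\lambda = 0$. The double intersection $H_\lambda \cong B^3$ must be matched with $H_{g,p,b} = H_{0,p,1}$, the compression body between $\Sigma_{0,1}$ and $\Sigma_{p,1}$; since $B^3 \cong D^2 \times [0,1] = \Sigma_{0,1} \times [0,1]$, this forces $p=0$. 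The collar piece $X_\lambda \cap \del B^4 \cong B^3 \cong \Sigma_{0,1} \times [0,1]$ then matches $Y^0_{g,k_\lambda,p,b} \cong \Sigma_{p,b} \times [0,1]$ with $p=0$, $b=1$. The main obstacle is purely notational bookkeeping: checking that the convexity argument identifying $X_\lambda$ with $B^4$ is clean, and confirming that the compression body parameters agree with the conventions fixed in Section \ref{sec:trisections}.
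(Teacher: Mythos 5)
Your proof is correct and is exactly the direct verification the paper treats as immediate from Definition \ref{def:std-tri-b4}: compute the sector intersections in the 2-disk, pull back under $\pi$, and match the pieces against the $(g,k_\lambda;p,b)$ conventions. The paper offers no written proof for this lemma, so your computation (including the identification of $X_\lambda \cap \del B^k$ with a sector of the standard trisection of $S^{k-1}$ via Lemma \ref{lemma:std}) fills in precisely the intended argument, with the usual tacit smoothing of corners.
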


Note that if $X$ is a smooth 4-manifold with boundary with relative trisection $\cT_X$ and $Y$ is a closed, smooth 4-manifold with trisection $\cT_Y$, the {\it (interior) connected sum} of trisections $\cT_X \# \cT_Y$ of $X \# Y$ can be defined as in the closed case (Definition \ref{def:connected-sum}).

\begin{definition}
Let $X$ be smooth 4-manifold with boundary and let $\cT$ be a relative trisection of $X$. A {\it sector-$\lambda$ interior stabilization} of $\cT$ is the trisection obtained by taking the connected sum with the unbalanced, genus 1 trisection $\cT_{\lambda}$ of $S^4$.
\end{definition}

The main structural result for relative trisections is the following.

\begin{theorem}[\cite{Gay-Kirby}]
Let $X$ be a smooth, oriented 4-manifold with nonempty boundary.
\begin{enumerate}
\item For any fixed open book on the boundary $\del X$, there exists a relative trisection $\cT$  of $X$ inducing that open book deomposition.
\item Any two trisections of $X$ inducing the same open book on the boundary become ambient isotopic after a sequence of interior stabilizations.
\item the 4-manifold $X$ is determined up to diffeomorphism by the spine of a relative trisection.
\end{enumerate}
\end{theorem}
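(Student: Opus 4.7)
The plan is to adapt the approach in the closed case via Morse 2-functions $f : X \to D^2$, with boundary conditions matching the given open book.

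For part (1), I would construct such an $f$ so that near $\del X$ the map realizes the given open book (the radial projection to $S^1$ of the boundary cylinder coincides with the open book fibration) and so that the interior critical locus consists of indefinite folds distributed along three arcs emanating from the trisection center. A concrete route is to start with a collar of $\del X$, which inherits an obvious relative trisection from the three page sectors separated by angles $2\pi/3$, and then extend over a handle decomposition of $X$ relative to this collar: isotope the attaching regions of the 1-, 2-, and 3-handles into the three sectors respectively, and verify that the resulting four-dimensional pieces are $\natural_{k_\lambda}(S^1\times B^3)$ with the correct pairwise intersections. An alternative route, following Castro--Gay--Pinz\'on, is to first trivialize $X$ as a compression body over the open book and then perform the stratified handle slide argument directly.

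For part (2), this is a Cerf-theoretic uniqueness statement. Any two relative trisection maps inducing the same boundary open book can be joined by a generic 1-parameter family $f_t : X \to D^2$ that restricts to the same fibration on $\del X$ throughout. The codimension-1 bifurcations along this path (births/deaths of cancelling fold pairs, eye and merge crossings) can each be realized by a sequence of interior stabilizations in a single sector, since these bifurcations live entirely in the interior and do not alter the boundary book. The chief difficulty is to confine each elementary bifurcation to a neighborhood disjoint from the other two sectors, so that it acts as a \emph{sector-$\lambda$} interior stabilization in the sense defined above; this is where the use of Laudenbach--Po\'enaru and the standard structure of $\cT_\lambda \# \cT$ enters.

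Part (3) is essentially formal once (1) is in place. The spine $H_1 \cup H_2 \cup H_3$ determines the central surface $\Sigma$, the full boundary of each sector $X_\lambda$ (both the internal portion $H_\lambda \cup H_{\lambda+1}$ and the collar $\Sigma_{p,b}\times[0,1]$ of $X_\lambda \cap \del X$), and identifies each $\del X_\lambda$ as $\# k_\lambda (S^1\times S^2) \natural \text{(collar)}$. Since $X_\lambda \cong \natural_{k_\lambda}(S^1 \times B^3)$, Laudenbach--Po\'enaru ensures that each sector is filled in uniquely up to diffeomorphism rel boundary, and reassembly along the spine reconstructs $X$.

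The main obstacle will be part (2): maintaining the boundary open book throughout a generic homotopy of Morse 2-functions while simultaneously decomposing the interior bifurcations into sector-by-sector stabilizations requires the most delicate bookkeeping. Parts (1) and (3) are, by contrast, adaptations of existing techniques together with Laudenbach--Po\'enaru.
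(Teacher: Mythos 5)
This theorem is stated in the paper as a citation to Gay--Kirby (with the relative case really due to Castro--Gay--Pinz\'on, \cite{CGP1,CGP2}); the paper offers no proof of its own, so there is nothing internal to compare your sketch against. That said, your outline does track the argument in the literature: Morse 2-functions $f : X \to D^2$ with boundary conditions prescribed by the open book, existence via extending over a handle decomposition relative to a boundary collar, stable uniqueness via Cerf theory for generic homotopies of Morse 2-functions, and reconstruction from the spine via Laudenbach--Po\'enaru.

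Two imprecisions worth flagging. First, in part (3) you write that Laudenbach--Po\'enaru gives a filling ``uniquely up to diffeomorphism rel boundary.'' That is not what the theorem says --- and in fact one cannot hope for a rel-boundary statement. The content of Laudenbach--Po\'enaru is that every self-diffeomorphism of $\#_k (S^1 \times S^2)$ extends over $\natural_k(S^1 \times B^3)$; this is exactly what makes the glued-up 4-manifold well-defined up to (absolute) diffeomorphism even though the gluing map is not canonical. You also need to observe that the spine, together with the requirement that $X_\lambda \cap \del X \cong \Sigma_{p,b}\times [0,1]$, determines the full boundary $\del X_\lambda$ before you can invoke Laudenbach--Po\'enaru; as written, you only account for the ``internal'' portion $H_\lambda \cup H_{\lambda+1}$. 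Second, in part (2) you locate Laudenbach--Po\'enaru as the key input for confining Cerf bifurcations to a single sector; in the actual argument the hard work in part (2) is the wrinkled-map / Cerf-theoretic control of the folds and cusps (a la Gay--Kirby and Castro--Gay--Pinz\'on), and Laudenbach--Po\'enaru plays essentially no role there. Otherwise your plan is the right one, and your assessment that part (2) carries the real difficulty is accurate.
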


\subsection{Bridge position for surfaces}

Meier and Zupan generalized bridge position for links in a 3-manifold to bridge position for surfaces in a 4-manifold \cite{MZ1,MZ2}.  Meier has recently extended the theory of bridge trisections to properly embedded surfaces in 4-manifolds with boundary \cite{Meier-Relative}.  We will give a brief overview, focusing only on the relevant case $X \cong B^4$.

A properly embedded tangle $\tau$ in 3-manifold with boundary $M$ is {\it trivial} if the arcs of $\tau$ can be simultaneously isotoped to lie in $\del M$.  A {\it disk-tangle} $\cD \subset X$ in a 4-manifold is a collection of properly embedded disks.  A disk-tangle is {\it trivial} if the disks can be simultaneously isotoped to lie in the boundary. All of the trivial tangles and disk-tangles we consider will be of the following form.  Let $\Sigma$ be a compact surface with nonempty boundary and let $Q$ be a collection of points in $\Sigma$.  Then
\begin{enumerate}
    \item $Q \times [0,1]$ is a trival tangle in $\Sigma \times [0,1]$, and
    \item $Q \times [0,1]^2$ is a trivial disk-tangle in $\Sigma \times [0,1]^2$
\end{enumerate}

Recall that a link $L \subset Y$ is in {\it bridge position} with respect to a Heegaard splitting $Y = H_1 \cup_{\Sigma} H_2$ if each intersection $\tau_{\lambda} = L \cap H_{\lambda}$ is a trivial tangle.  Extending, we say that a properly embedded, closed surface $\cK$ in a closed 4-manifold $X$ is in {\it bridge position} with respect to a trisection $\cT$ of $X$ if
\begin{enumerate}
    \item $\cK$ intersects the central surface $\Sigma$ of the trisection transversely in $2b$ points,
    \item $\cK$ intersects each handlebody $H_{\lambda}$ along a trivial tangle $\tau_{\lambda}$.
    \item $\cK$ intersects each 4-dimensional sector $X_{\lambda}$ along a trivial disk tangle.
\end{enumerate}
The parameter $b$ is the {\it bridge index} of the surface $\cK$.

Now suppose that $\cK$ is a properly embedded surface in $B^4$, let $\cT$ be a relative trisection of $B^4$, and suppose that $\del \cK$ is braided with respect to the induced open book decomposition on $\del B^4$.  

\begin{definition}
We say that $\cK$ is in {\it relative bridge position} with respect to $\cT$ if
\begin{enumerate}
    \item $\cK$ intersects the central surface $\Sigma$ transversely in $2b + n$ points,
    \item $\cK$ intersects each handlebody $H_{\lambda}$ along a trivial tangle, and
    \item $\cK$ intersects each sector $X_{\lambda}$ along a trivial disk-tangle.
\end{enumerate}
If $n$ is the braid index of $\del \cK$, we say that $b$ is the {\it bridge index} of $\cK$.
\end{definition}

\subsection{Bridge perturbation}
\label{sub:bridge-perturbation}

For motivation, recall that a {\it bridge perturbation} of a link $L \subset Y$ in bridge position is a local modification that increases the bridge index within the same ambient isotopy class.  We can describe the move abstractly and in coordinates:

\begin{enumerate}
    \item {\bf Abstractly} Near a bridge point $x \in \Sigma$, we can choose a local neighborhood $U$ and coordinates $(x,y,)$ on $U$ such that (a) the Heegaard surface intersects $U$ along the $xy$-plane, and (b) the link $L$ intersects $U$ along the $z$-axis.  Now choose an arc $\delta$ in $U$, such that one endpoint lies on $L$ and the other on $\Sigma$.  This determines a finger-move isotopy of $L$ through $\Sigma$.
    \item {\bf In coordinates}.  In the coordinates on $U$ from above, we can isotope $L$ to be the linear graph $L = (t,0,t)$.  By an isotopy, we can further isotope
 the link to be the cubic graph $L' = (t,0,t^3 - \epsilon t)$.
\end{enumerate}

We can define an analogous procedure for surfaces in bridge position.  Following \cite[Definition 9.9]{Meier-Relative}, we first define a {\it finger-perturbation} abstractly.

\begin{definition}[\cite{Meier-Relative}]
Let $\cK$ be a surface in relative bridge position with respect to a relative trisection $\cT$.  Let $x$ be a bridge point and $U$ a neighborhood of $X$ such that $\cK$ intersects $U$ along a small disk.  The intersection of $\cK$ with $U \cap H_{\lambda}$ is an arc $\tau_{\lambda}$ with one endpoint at $x$.  A {\it sector-$\lambda$ finger perturbation} of $\cK$ is the surface $\cK'$ obtained by a finger-move isotopy along a small arc $\delta$ in $U \cap H_{\lambda+1}$, with one endpoint on $\tau_{\lambda+1}$ and the other in $\Sigma$.
\end{definition}

The finger move increases the bridge index by 1; it increases the number of components of $\cK \cap X_{\lambda+1}$ by one (since the sector $X_{\lambda+1}$ of the trisection is opposite to the handlebody $H_{\lambda}$); and it fixes the number of components of $\cK \cap X_{\lambda-1}$ and $\cK \cap X_{\lambda}$.

We can also describe this move visually in 3 dimensions by projecting away one dimension.  Let $\cK$ be a surface in bridge position with respect to a trisection $\cT$.  We can locally choose coordinates $(x,y,z)$ on $\RR^3$ near a bridge point $\cK \cap \Sigma$ such that
\[ \cK = \{ z = 0\}\]
and the trisection is given by pulling back the standard trisection of the $xy$-plane by the projection $\pi: \RR^3 \rightarrow \RR^2$.

A bridge perturbation consists of locally replacing $\cK$ by
\[\cK' = \{ x = z^3 - yz \} \]
Intuitively, this is accomplished by {\it pleating} the surface $\cK$ through the center of the trisection.

Restricted to $\cK$, the projection $\pi$ is a homeomorphism.  Restricted to $\cK'$, however, there is a critical local that projects to a semicubical cusp $(3t^2,-2t^3)$.  Over the interior of the cusp, the map is 3-to-1; over the exterior it is 1-to-1; at the origin there is a cusp singularity; and everywhere else is a fold singularity.

\subsection{Branched covers of surfaces in relative bridge position}

\begin{theorem}
\label{thrm:bridge-branch-tri}
Let $X$ be smooth 4-manifolds with boundary, let $\cK \subset X$ be a properly embedded surface, and let $\pi: \widetilde{X} \rightarrow X$ be an $n$-fold simple branched covering determined by a homomorphism $\rho: \pi_1(X \setminus \cK) \rightarrow S_n$.

\begin{enumerate}
    \item Suppose that $\cT$ is a relative trisection of $X$ and $\cK$ is in relative bridge position with respect to $\cT$.  Then $\widetilde{\cT} = \pi^{-1}(\cT)$ is a relative trisection of $\widetilde{X}$.
    \item Suppose that $\cK'$ isotopic to $\cK$ by a sector-$\lambda$ interior bridge perturbation.  Let $\widetilde{\cT},\widetilde{\cT}'$ be the induced relative trisections of $\widetilde{X}$.  Then $\widetilde{\cT}'$ is obtained from $\widetilde{\cT}$ by a sector-$(\lambda+1)$ interior stabilization.
    \end{enumerate}
\end{theorem}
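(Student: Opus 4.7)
The plan is to treat the two parts separately: part (1) is a stratum-by-stratum check that the branched cover preserves the relative trisection structure, while part (2) combines a Riemann--Hurwitz accounting with a local identification of the change as a stabilization.

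\textbf{Part (1).} I would verify the conditions of a relative trisection for $\widetilde{\cT} = \pi^{-1}(\cT)$ stratum by stratum, using the relative bridge hypothesis in each dimension. Since each intersection $\cK \cap X_\lambda$ is a trivial disk-tangle isotopic into $\partial X_\lambda$, the $n$-fold simple branched cover of $X_\lambda \cong \natural_{k_\lambda}(S^1 \times B^3)$ along this disk-tangle is again a 4-dimensional 1-handlebody, with first Betti number determined by Riemann--Hurwitz. Likewise, each double intersection $\pi^{-1}(H_\lambda)$ is the simple branched cover of a compression body along a trivial tangle, which is again a compression body of compatible type; and $\pi^{-1}(\Sigma)$ is the simple branched cover of $\Sigma$ over $2b + n$ points, hence a surface of computable genus. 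The boundary condition $\pi^{-1}(X_\lambda \cap \partial X) \cong \Sigma_{g',p} \times [0,1]$ follows because $\partial \cK$ is braided with respect to the open book on $\partial X$ induced by $\cT$, so a braided boundary pulls back to a branched-cover open book on $\partial \widetilde{X}$ whose sector restrictions inherit the product structure. The closed analogue of this is established in \cite{LM-Rational, LMS}; the new content here is the boundary compatibility, which is built into relative bridge position.

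\textbf{Part (2).} A sector-$\lambda$ bridge perturbation is compactly supported in a neighborhood $U$ of one bridge point, so it suffices to show that inside $\pi^{-1}(U)$ the pullback trisection undergoes precisely a sector-$(\lambda+1)$ unbalanced stabilization. The argument has a numerical and a local-geometric half. On the numerical side, the perturbation adds exactly one disk to $\cK \cap X_{\lambda+1}$, leaves $\cK \cap X_\lambda$ and $\cK \cap X_{\lambda-1}$ unchanged as disk-tangles, and adds two new points to $\cK \cap \Sigma$. Applying the Riemann--Hurwitz identity
\[\chi(\pi^{-1}(A)) = n\, \chi(A) - \chi(\pi^{-1}(\cK) \cap A)\]
stratum by stratum yields that the trisection genus goes up by $1$, that $\widetilde{k}_{\lambda+1}$ goes up by $1$, and that $\widetilde{k}_\mu$ is unchanged for $\mu \neq \lambda+1$, matching the parameters of a sector-$(\lambda+1)$ interior stabilization.

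For the local-geometric part, I would use the explicit model of Section \ref{sub:bridge-perturbation}: in coordinates on $U$ the perturbed surface is the pleated graph $\cK' = \{x = z^3 - yz\}$, and the trisection is pulled back from the standard trisection of $\RR^2$ by $(x,y,z) \mapsto (x,y)$. Restricting to the two sheets on which the local transposition monodromy acts, one can draw the double cover of $U$ branched over $\cK'$ explicitly: the preimage of the central disk (originally branched at one point, now at three collinear points) becomes a once-punctured torus contributing a new handle to $\widetilde{\Sigma}$, the preimages of the three compression bodies acquire new compressing disks whose cut system on this new handle has the pattern of Example \ref{ex:unbalanced-S4} (two parallel curves, one dual), and the new $S^1 \times B^3$ summand attaches to $\widetilde{X}_{\lambda+1}$. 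This local picture is precisely the connected sum with the unbalanced genus-one trisection $\cT_{\lambda+1}$ of $S^4$, i.e.\ a sector-$(\lambda+1)$ interior stabilization.

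\textbf{Main obstacle.} The Riemann--Hurwitz count is routine; the substantive step is the local identification. Numerical coincidence of trisection parameters is not enough, since two $(g; k_1, k_2, k_3)$-relative trisections with the same parameters need not be isotopic. The content lies in tracking how the three cut systems of $\widetilde{\cT}$ are modified inside $\pi^{-1}(U)$ and verifying that the new compressing disks on the new genus-one handle of $\widetilde{\Sigma}$ realize the Example \ref{ex:unbalanced-S4} pattern with the dual curve positioned so as to stabilize sector $\lambda+1$ rather than one of its neighbors.
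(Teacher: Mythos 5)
Your proposal matches the paper's argument in its core, and is in fact more careful at the last step. For Part (1), the paper simply cites \cite[Theorem 4.1]{LMS}, \cite[Proposition 3.1]{LM-Rational}, and \cite[Proposition 13]{MZ2}; your stratum-by-stratum verification, and in particular the observation that the braided-boundary hypothesis is what gives the pullback open book on $\del\widetilde{X}$ and the product structure on $\pi^{-1}(X_\lambda \cap \del X)$, is a reasonable fleshing-out of that citation. For Part (2), the paper's proof is exactly your ``numerical half'': it runs the Euler-characteristic count to conclude that $g(\widetilde{\Sigma})$ increases by $1$, that $\chi(\widetilde{X}_{\lambda+1})$ decreases by $1$, and that the other two sectors are unchanged, and then asserts ``this exactly corresponds to the trisection stabilization.'' Your ``main obstacle'' paragraph correctly identifies the gap that statement elides: matching $(g;k_1,k_2,k_3)$ parameters is not the same as exhibiting $\widetilde{\cT}'$ as a connected sum of $\widetilde{\cT}$ with $\cT_{\lambda+1}$. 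Since the perturbation is supported in a ball $U$ about one bridge point, it does suffice to analyze $\pi^{-1}(U)$, and your local computation (the double cover of a disk branched over $3$ points is a once-punctured torus; the new cut curves on it realize the two-parallel-one-dual pattern of Example \ref{ex:unbalanced-S4} with the dual curve placed to stabilize sector $\lambda+1$) is precisely the verification the paper leaves to the reader.
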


\begin{proof}
Part (1) is the relative version of branched covering constructions of trisections , such as \cite[Theorem 4.1]{LMS}, \cite[Proposition 3.1]{LM-Rational}, and \cite[Proposition 13]{MZ2}.

An interior bridge perturbation increases the bridge index by 1 and the number of intersection points $\cK' \cap \Sigma$ by two.  Therefore, since the map $\pi$ is a simple branched covering, the Euler characteristic of the central surface $\widetilde{\Sigma}$ upstairs decreases by 2 or equivalently the genus increases by 1.  The number of components of $\cK' \cap X_{\lambda-1}$ and $\cK' \cap X_{\lambda}$ stays the same, while the number of components of $\cK' \cap X_{\lambda+1}$ increases by 1.  Therefore, the Euler characteristic of $\widetilde{X}_{\lambda-1},\widetilde{X}_{\lambda}$ stay the same; since they are 1-handlebodies this implies that their smooth topology stays the same.  However, the Euler characteristic of $\widetilde{X}_{\lambda+1}$ decreases by 1.  Thus $\widetilde{X}'_{\lambda+1}$ is obtained from $\widetilde{X}_{\lambda+1}$ by attaching a 1-handle.  In summary, this exactly corresponds to the trisection stabilization.
\end{proof}

\section{Stein trisections}
\label{sec:Stein}

A complex manifold $X$ is {\it Stein} if it admits a proper holomorphic embedding into some $\CC^N$. In particular, a Stein manifold admits an exhausting plurisubharmonic functions $f$, whose sublevel sets are pseudoconvex. Conversely, given a relative compact pseudoconvex domain $\Omega \subset X$, there exists an exhausting plurisubharmonic function such that $f^{-1}(-\infty,0)) = \Omega$.  We will generally restrict to relatively compact, pseudocovnex domains with smooth boundary in Stein surfaces, as these admit smooth trisections.

An {\it analytic polyhedron} in a complex manifold $X$ is a domain of the form
\[P = \{z \in X : |f_1(z)|,\dots,|f_k(z)| \leq 1 \}\]
where $f_1,\dots,f_k$ are holomorphic functions on $X$.  Analytic polyhedra are pseudoconvex and therefore Stein.

The main result of this section are constructions of Stein trisections of $B^4$.

\begin{theorem}
\label{thrm:Stein-B4}
Let $\cT$ be an interior stabilization of the standard relative trisection of $B^4$.  Then $\cT$ admits a Stein structure.
\end{theorem}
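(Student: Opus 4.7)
The plan is a two-step construction: first realize the standard disk-centered relative trisection $\cT_0$ of $B^4$ as a Stein trisection by explicit analytic polyhedra, then propagate the Stein structure through arbitrary interior stabilizations using the branched-cover machinery of Theorem~\ref{thrm:bridge-branch-tri}.

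\textbf{Base case.} We realize $B^4 \cong T \times \DD \subset \CC^2$, where $T \subset \CC$ is the equilateral triangle with vertices $1, \omega, \omega^2$ for $\omega = e^{2\pi i/3}$. Every closed half-plane $\{\mathrm{Re}(\alpha z) \le \beta\}$ equals $\{|e^{\alpha z - \beta}| \le 1\}$, so $T$ (bounded by three such half-planes) and $\DD$ are analytic polyhedra in $\CC$, and $T \times \DD$ is an analytic polyhedron in $\CC^2$. Trisect $T$ into three congruent quadrilateral kites $T_1, T_2, T_3$ by joining the centroid $0$ to the midpoints of the three edges; each $T_\lambda$ is again a polygonal analytic polyhedron. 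Setting $X_\lambda = T_\lambda \times \DD$ decomposes $B^4$ by analytic polyhedra with central surface $\{0\} \times \DD \cong D^2$, double intersections $H_\lambda \cong B^3$, and sectors $X_\lambda \cong B^4$. After a standard pseudoconvex smoothing of corners, this realizes $\cT_0$ as a Stein trisection, by the uniqueness of the disk-centered relative trisection of $B^4$.

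\textbf{Inductive step.} We exploit the observation that analytic polyhedra pull back under Stein branched covers: if $X_\lambda = \{|g_1|, \dots, |g_k| \le 1\}$ is an analytic polyhedron in an ambient Stein domain $\Omega$ and $\pi \colon \widetilde\Omega \to \Omega$ is a simple branched cover over a complex hypersurface, then $\pi^{-1}(X_\lambda) = \{|g_1 \circ \pi|, \dots, |g_k \circ \pi| \le 1\}$ is an analytic polyhedron in $\widetilde\Omega$. Consequently, the pullback of a Stein trisection under a Stein branched cover is a Stein trisection. To pass from a Stein realization of a standard trisection $\cT$ to one of a one-step sector-$\lambda$ interior stabilization $\cT'$, we set up $\cT$ as the pullback of $\cT_0$ under a two-fold branched cover whose branch locus $\cK \subset B^4$ is a holomorphic disk in relative bridge position, and then holomorphically perturb $\cK$ to a new holomorphic disk $\cK'$ implementing a sector-$(\lambda-1)$ bridge perturbation. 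The local cubic-pleat model $\{x = z^3 - yz\}$ of Section~\ref{sub:bridge-perturbation} is the real shadow of a complex-analytic deformation of the branch curve, carried out by adding a cubic term to the local defining function of $\cK$. By Theorem~\ref{thrm:bridge-branch-tri}(2), the upstairs trisection becomes the desired stabilization $\cT'$, and by the pullback principle it is again Stein. Iterating sector-by-sector handles every standard relative trisection of $B^4$.

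\textbf{Main obstacle.} The technical heart will be executing the bridge-perturbation step holomorphically: realizing the smooth finger-move of a branch disk as a deformation through holomorphic curves so that the upstairs branched cover remains an analytic polyhedral Stein trisection. The local cubic-pleat model strongly suggests this is always possible, but one must verify global compatibility: the perturbed $\cK'$ must remain a properly embedded holomorphic disk in relative bridge position throughout, with its boundary a braid inducing the correct open book on $\partial B^4$. A secondary, routine check is that the branched cover upstairs stays diffeomorphic to $B^4$: bridge perturbation is a smooth isotopy of the unknotted branch disk, and the double branched cover of $B^4$ over an unknotted disk is again $B^4$.
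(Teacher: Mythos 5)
Your overall strategy matches the paper's: an explicit analytic-polyhedral Stein trisection of a model 4-ball (your $T\times\DD$ plays the role of the paper's $Q_M$, trisected by pulling back a linear trisection of a planar region), followed by holomorphic branched covers over holomorphic trivial disks in relative bridge position, with stabilizations induced by replacing a linear branch disk with a cubic ``pleated'' one exactly as in Proposition \ref{prop:holomorphic-pleating}. The base case and the pullback principle for analytic polyhedra are fine (the latter is Theorem \ref{thrm:hol-branched-pullback-tri}).

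The genuine gap is in your inductive step. As stated, it produces only \emph{one} stabilization: a 2-fold cover of $B^4$ branched over a single unperturbed trivial holomorphic disk pulls $\cT_0$ back to $\cT_0$, and pleating that disk yields one interior stabilization. To iterate, you would need, at each subsequent stage, a properly embedded \emph{holomorphic} trivial disk in the new Stein domain $\widetilde X$ that is in relative bridge position with respect to the already-stabilized pullback trisection; you give no construction of such a disk upstairs, and producing holomorphic curves transverse to a pulled-back trisection in bridge position is precisely the hard part. Nor can you repair this by branching a single 2-fold cover over $n>1$ disks downstairs: the central surface upstairs would be the double cover of $D^2$ branched over $n$ points, which has $\chi = 2-n$ and is not a disk, so the unperturbed pullback is no longer the standard trisection. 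The paper avoids both problems by doing everything downstairs in one shot: it takes a single $(n{+}1)$-fold \emph{simple} branched cover of $Q_M$ over $n$ pairwise disjoint linear holomorphic graphs (translated far apart in the imaginary directions), with monodromy sending the $k^{\text{th}}$ meridian to $(k\ k{+}1)$; then each stratum upstairs is a disk/ball and the pullback is again the standard relative trisection (Corollary \ref{cor:infinite-stein}), and performing $n_\lambda$ cubic pleats in sector $\lambda$ among these $n$ disks realizes the $(n_1,n_2,n_3)$-stabilization, with each perturbation carried out in the explicit coordinates of $Q_M$ where the cubic model lives. Adopting this simultaneous multi-disk simple cover would close your gap; a secondary point to then check (which you partly flag) is that your base model, whose central surface $\{0\}\times\DD$ is a complex disk rather than the totally real disk $\Sigma\subset i\RR^2$ of $Q_M$, still admits linear holomorphic branch disks in bridge position --- the paper's choice of a totally real central surface is what makes the graphs $\Gamma_{\epsilon,\theta}$ project diffeomorphically to the trisected real plane.
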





\subsection{Standard Stein trisection of $B^4$} As a first step, we construct a basic Stein trisection of the 4-ball.

Fix coordinates $z_j = x_j + i y_j$ on $\CC^2$.  For some real $M > 0$, define the subset
\[Q_M = \left\{ |x_j| \leq \frac{1}{M} \qquad |y_j| \leq M \right\}\]
Clearly, it is topologically equivalent to the standard $D^{2n}$.  Under the coordinate projection $\CC^2 \rightarrow \RR^2$ it maps to the unit square $[ -\frac{1}{n},\frac{1}{n} ] \times [ -\frac{1}{n},\frac{1}{n} ]$.  Consequently, it is the analytic polyhedron defined by the functions
\[ \left\{f_{i,\pm} = exp\left(\pm \left(z_i - \frac{1}{n}\right)\right) \right\}\]

Define functions
\[\phi_1 = x_2 \qquad \phi_2 =  - \sqrt{3} x_1 -x_2 \qquad \phi_3 = \sqrt{3}x_1 - x_2\]

\begin{theorem}
\label{thrm:b4-stein-standard}
The decomposition $Q_M = Z_1 \cup Z_2 \cup Z_3$ defined by setting
\[Z_{\lambda} = \{z \in Q_M : max(\phi_{\lambda},-\phi_{\lambda-1}) \leq 0\}\]
is a Stein trisection of $Q_M$.  In particular, it is equivalent to the standard relative trisection of $B^4 \cong Q_M$.

Furthermore, let $H_{\lambda} = Z_{\lambda} \cap Z_{\lambda -1}$ and $\Sigma = Z_1 \cap Z_2 \cap Z_3$.
\begin{enumerate}
    \item The central surface $\Sigma$ is a disk in the plane $i \RR^2 \subset \CC^2$
    \item The handlebody $H_{\lambda} = Z_{\lambda-1} \cap Z_{\lambda}$ lies in the hyperplane $\{\phi_{\lambda} = 0\}$ and is foliated by holomorphic disks.
\end{enumerate}
\end{theorem}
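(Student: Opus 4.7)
The plan is to present each $Z_\lambda$ explicitly as an analytic polyhedron and then identify the resulting decomposition with the standard relative trisection of $B^4$ via the coordinate projection $\pi \colon \CC^2 \to \RR^2$, $z_j \mapsto x_j$. The key observation throughout is that each $\phi_\lambda$ is the real part of a $\CC$-linear function: one may take $L_1(z) = z_2$, $L_2(z) = -\sqrt{3}\,z_1 - z_2$, and $L_3(z) = \sqrt{3}\,z_1 - z_2$, so that $\phi_\lambda = \mathrm{Re}(L_\lambda)$.

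First, I would rewrite each defining inequality of $Z_\lambda$ as a modulus bound of an entire function. The inequality $\phi_\lambda \leq 0$ is equivalent to $|e^{L_\lambda(z)}| \leq 1$, and $-\phi_{\lambda-1} \leq 0$ to $|e^{-L_{\lambda-1}(z)}| \leq 1$. Together with the exponential inequalities already given for $Q_M$ (including analogous bounds for the imaginary parts $y_j$, e.g.\ $|e^{\pm i z_j - M}|\leq 1$), these express $Z_\lambda$ as an intersection of finitely many sublevel sets $\{|f_j| \leq 1\}$ of entire functions. Hence $Z_\lambda$ is an analytic polyhedron, and in particular pseudoconvex and Stein.

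Second, I would identify the smooth decomposition with the standard relative trisection. The three real-linear functions $\phi_1, \phi_2, \phi_3$ cut $\RR^2$ into three $120^\circ$ wedges meeting at the origin, and the projected square $\pi(Q_M) = [-1/M, 1/M]^2$ is thus partitioned into three convex pieces which, after a diffeomorphism of the square with the disk, realize the standard planar trisection $D^s_1 \cup D^s_2 \cup D^s_3$ of Definition \ref{def:std-tri-b4}. Since $Z_\lambda = \pi^{-1}(\text{wedge}_\lambda) \cap Q_M$, the decomposition is precisely the $\pi$-pullback of this planar trisection, which is by definition the standard relative trisection of $B^4 \cong Q_M$; each $Z_\lambda$ is convex and so a 4-ball, matching the $(0;0,0,0;0,1)$ parameters.

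Finally, I would verify the structural claims. The central surface $\Sigma$ is cut out by requiring $\phi_\lambda \leq 0$ and $\phi_\lambda \geq 0$ for every $\lambda$, forcing $\phi_1 = \phi_2 = \phi_3 = 0$ and hence $x_1 = x_2 = 0$; so $\Sigma = \{(iy_1, iy_2) : |y_j| \leq M\}$ is a square in $i\RR^2$, a topological disk. For each double intersection $H_\lambda$, combining the inequalities defining the two adjacent sectors forces the shared function $\phi_{\lambda'}$ to vanish identically, placing $H_\lambda$ in the real hyperplane $\{\mathrm{Re}(L_{\lambda'}) = 0\} \subset \CC^2$. This hyperplane is the union of the parallel complex lines $\{L_{\lambda'} = ic\}_{c \in \RR}$; since $H_\lambda$ is convex, each such line intersects $H_\lambda$ in a convex planar region, i.e.\ a holomorphic disk, yielding the claimed Levi-flat foliation. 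No step is expected to present a serious obstacle: the whole argument reduces to the observation that a real-affine function which is $\mathrm{Re}(L)$ for a $\CC$-linear $L$ gives both an analytic inequality $|e^L|\leq 1$ and an automatically Levi-flat zero set with a canonical holomorphic foliation, while the topological identification with the standard trisection is immediate from the planar picture downstairs.
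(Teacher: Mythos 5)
Your proof is correct and follows essentially the same route as the paper: pull back the planar trisection of the square via the coordinate projection, and use the fact that each $\phi_\lambda$ is the real part of a $\CC$-linear function both for the analytic-polyhedron presentation and for the foliation of $H_\lambda$ by slices of complex lines $\{L_\lambda = ic\}$. In fact you supply more detail than the paper's proof, which leaves the explicit exponential inequalities $|e^{L_\lambda}|\leq 1$ exhibiting each $Z_\lambda$ as an analytic polyhedron implicit.
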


\begin{proof}
The functions $\phi_1,\phi_2,\phi_3$ are chosen to trisect the unit square $[ -\frac{1}{n},\frac{1}{n} ] \times [ -\frac{1}{n},\frac{1}{n} ] \subset \RR^2$ and the trisection of $Q_M$ is obtained by pulling this back via the projection $\CC^2 \rightarrow \RR^2$.  This construction is identical to Definition \ref{def:std-tri-b4}.

Statements (1) and (2) are immediate.  Each $\phi_{\lambda}$ is the real part of a linear holomorphic function $g_{\lambda} = a z_1 + b z_2$ for $a,b$ real.  The handlebody $H_{\lambda}$ is foliated by disks contained in complex lines form $g_{\lambda} = ic$ for some real constant $c$.
\end{proof}

\subsection{Holomorphic branched coverings and Stein trisections}

We can rephrase Theorem \ref{thrm:bridge-branch-tri} in the context of Stein trisections.  

\begin{theorem}
\label{thrm:hol-branched-pullback-tri}
Let $\widetilde{X},X$ be Stein surfaces and let $\pi: \widetilde{X} \rightarrow X$ be a holomorphic, simple branched covering map ramified over an embedded holomorphic curve $\cK \subset X$.  Let $\cT$ be a Stein trisection of $X$.  Suppose that $\cK$ is in relative bridge position with respect to $\cT$.  Then $\widetilde{\cT} = \pi^{-1}(\cT)$ is a Stein trisection of $\widetilde{X}$.
\end{theorem}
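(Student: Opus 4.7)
The plan is to split the verification into a smooth part and a complex-analytic part, and to observe that each part has essentially already been handled by results in the paper or by a one-line manipulation of defining functions.

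For the smooth content, Theorem \ref{thrm:bridge-branch-tri}(1) applies verbatim: the hypothesis that $\cK$ is in relative bridge position with respect to $\cT$ is exactly what is needed to conclude that $\widetilde{\cT} = \pi^{-1}(\cT)$ is a smooth relative trisection of $\widetilde{X}$, with sectors $\widetilde{Z}_\lambda = \pi^{-1}(Z_\lambda)$, spine handlebodies $\widetilde{H}_\lambda = \pi^{-1}(H_\lambda)$, and central surface $\widetilde{\Sigma} = \pi^{-1}(\Sigma)$. Nothing further is required on the topological side.

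For the complex-analytic content, I would show directly that analytic polyhedra pull back to analytic polyhedra under holomorphic maps. Since $\cT$ is a Stein trisection, for each $\lambda \in \{1,2,3\}$ there exist holomorphic functions $f^\lambda_1, \ldots, f^\lambda_{k_\lambda}$ on $X$ such that
\[Z_\lambda = \left\{z \in X : |f^\lambda_i(z)| \leq 1 \text{ for } i=1,\ldots,k_\lambda\right\}.\]
Because $\pi$ is holomorphic on all of $\widetilde{X}$ --- including, crucially, at points of the ramification locus $\pi^{-1}(\cK)$ --- the compositions $g^\lambda_i := f^\lambda_i \circ \pi$ are holomorphic functions on $\widetilde{X}$, and by set-theoretic preimage
\[\widetilde{Z}_\lambda = \pi^{-1}(Z_\lambda) = \left\{w \in \widetilde{X} : |g^\lambda_i(w)| \leq 1 \text{ for } i=1,\ldots,k_\lambda\right\}\]
exhibits $\widetilde{Z}_\lambda$ as an analytic polyhedron in $\widetilde{X}$. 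Combined with the smooth trisection structure from the previous paragraph, this establishes that $\widetilde{\cT}$ is a Stein trisection.

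I do not anticipate a genuine obstacle. The ramification of $\pi$ along $\cK$ is immaterial to the analytic step: only holomorphicity of $\pi$ is invoked, and the analytic-polyhedron condition is expressed purely in terms of moduli of holomorphic functions, which behave functorially under any holomorphic map. The one substantive input --- that the pulled-back cell structure is actually a smooth relative trisection, requiring the bridge-position hypothesis to control how $\widetilde{\Sigma}$, $\widetilde{H}_\lambda$, and $\widetilde{Z}_\lambda$ sit together --- is entirely absorbed by Theorem \ref{thrm:bridge-branch-tri}(1).
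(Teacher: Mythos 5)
Your proposal is correct and matches the paper's proof exactly: the smooth statement is delegated to Theorem \ref{thrm:bridge-branch-tri}(1), and the analytic statement follows by composing the defining functions of each polyhedral sector with the holomorphic map $\pi$. No further comment is needed.
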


\begin{proof}
The smooth statement follows immediately from Theorem \ref{thrm:bridge-branch-tri}.  If the sector $X_{\lambda}$ is an analytic polyhedron defined by the holomorphic functions $f_1,f_2$, then the sector $\widetilde{X}_{\lambda} = \pi^{-1}(X_{\lambda})$ is an analytic polyhedron defined by the holomorphic functions $f_1 \circ \pi, f_2 \circ \pi$.  Therefore the decomposition is a Stein trisection.
\end{proof}

\subsection{An infinite family of Stein trisections of $B^4$}

Next, we construct an infinite family of Stein structures on the the standard, genus 0 relative trisection of $B^4$.

First, we construct the branch locus, which will be a collection of trivial disks in bridge position.  For some $\epsilon > 0$, define the following linear graph in $\CC^2$
\begin{align*}
    \Gamma_{\epsilon} &= (-i \epsilon z,z) \\
    &= (\epsilon y, -\epsilon x,x,y)
\end{align*}
for $z = x + i y$.  Moreover, for each $\theta \in [0,2 \pi]$ define the rotated line
\[\Gamma_{\epsilon,\theta} = \begin{bmatrix} \cos \theta & \sin \theta \\ -\sin \theta & \cos \theta \end{bmatrix} \cdot \Gamma_{\epsilon}\]

\begin{proposition}
\label{prop:infinite-family-Stein-B4}
Fix some $M,R$ such that $\frac{1}{M} \ll 1 \ll R \ll M$.  Let $\Theta = \{\theta_k\}$ for $k = 1,\dots,n$ be a collection of angles,  let 
\[ \cG_k = \Gamma_{\frac{1}{M},\theta_k} + \langle ikR, ikR \rangle \]
and let $\cG = \cup \cG_k$.
Then
\begin{enumerate}
\item The surface $\cG \cap Q_M$ is a collection of $n$ trivial disks in $B^4$
\item Each surface $\cG \cap Z_{\lambda}$ is a collection of $n$ trivial disks in $Z_{\lambda}$,
\item Each tangle $\cG \cap H_{\lambda}$ is a collection of $n$ trivial arcs in $H_{\lambda}$.
\item The intersection $\cG \cap \Sigma$ is a collection of $n$ points.
\end{enumerate}
In particular, $\cG \cap Q_M$ is in relative bridge position with respect to $\cT_0$.
\end{proposition}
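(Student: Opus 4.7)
The plan is to analyze each complex line $\cG_k$ via the coordinate projection $\pi \colon \CC^2 \to \RR^2$ defined by $(z_1,z_2) \mapsto (x_1,x_2)$, which sends $Q_M$ onto the central square $[-1/M,1/M]^2$ and pulls the standard trisection of that square back to $\cT_0$. First I would parametrize $\cG_k$ by $z = x + iy \in \CC$ and verify that $\pi|_{\cG_k} \colon \cG_k \to \RR^2$ is a real-linear bijection whose inverse scales the $y$-coordinate by a factor of $M$; explicitly, the preimage of $(x_1,x_2)$ on $\cG_k$ is parametrized by $(x,y) = (x_1 \sin \theta_k + x_2 \cos \theta_k,\, M(x_1 \cos \theta_k - x_2 \sin \theta_k))$, with imaginary coordinates $(y_1,y_2) = (kR,kR) + O(1)$ over the preimage of $[-1/M,1/M]^2$.

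Next I would exploit the scale hierarchy $1/M \ll 1 \ll R \ll M$ to verify two facts: the $|y_j| \leq M$ constraint is non-binding on $\cG_k$ because $|y_j| \leq nR + O(1) \ll M$, so $\cG_k \cap Q_M$ is exactly the $\pi|_{\cG_k}$-preimage of $[-1/M,1/M]^2$; and the $\cG_k$ are pairwise disjoint, since for $k \neq k'$ the difference $|y_1 - y_1'|$ at any common $x$-value is at least $|k-k'|R - O(1) \geq R - O(1) > 0$. Pulling back the standard trisection of the central square through $\pi|_{\cG_k}$ then yields claims (2)--(4) at once: the triple-intersection point $(0,0)$ pulls back to the single point $(ikR,ikR) \in \Sigma$, each edge separating two sectors pulls back to an arc in the corresponding $H_\lambda$, and each sector $D^s_\lambda$ pulls back to a disk in $Z_\lambda$. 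Summing over $k = 1,\dots,n$ gives $n$ points in $\Sigma$, $n$ arcs in each handlebody, and $n$ disks in each sector and in $Q_M$.

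The main obstacle and final step is topological triviality. The key observation is that $\pi$ realizes each trisection piece as a trivial product over its $x$-projection: $Q_M \cong [-1/M,1/M]^2 \times [-M,M]^2$, $Z_\lambda \cong D^s_\lambda \times [-M,M]^2$, and $H_\lambda \cong e_\lambda \times [-M,M]^2$ where $e_\lambda$ is the edge along which $D^s_\lambda$ and $D^s_{\lambda-1}$ meet. Under these identifications each component of $\cG_k \cap Z_\lambda$ (respectively $\cG_k \cap H_\lambda$, $\cG_k \cap Q_M$) is the graph of an affine map from the $x$-base into the interior of the $y$-fiber $[-M,M]^2$, and distinct graphs are pairwise separated by $\geq R$ in the $y$-plane. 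A simultaneous isotopy pushing each such graph radially outward in the $y$-plane to the outer boundary face $\{y_1 = M\}$, routing the $k$-th graph to a well-separated target height $y_2 = c_k$, carries each $\cG_k$-piece into the boundary of the corresponding trisection component without collision. This simultaneously establishes triviality of the arcs in each $H_\lambda$, of the disk-tangle in each $Z_\lambda$, and of the disk-tangle in $Q_M$, verifying relative bridge position with respect to $\cT_0$.
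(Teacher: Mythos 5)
Your proposal is correct and follows essentially the same route as the paper: project to the $(x_1,x_2)$-plane to see that $\pi|_{\cG_k}$ is a bijection pulling back the planar trisection (giving (2)--(4)), use the scale hierarchy $1/M \ll 1 \ll R \ll M$ for disjointness and containment, and exhibit an explicit isotopy in the imaginary directions carrying each graph into a boundary face to establish triviality. Your write-up is in fact slightly more careful than the paper's on two points the paper elides --- the explicit handling of the rotation by $\theta_k$ and the verification that the $|y_j|\leq M$ constraints are non-binding --- but the argument is the same.
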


\begin{proof}
Consider the intersection of 
\[ \Gamma_{0} + \langle ikR, ikR \rangle\]
with $Q_M$.  In coordinates $z = x + iy$, it is the surface
\[\left(\frac{1}{M} y, - \frac{1}{M} x + kR, x,y + kR\right)\]
where $|x| \leq \frac{1}{M}$ and $|y| \leq 1$.  In particular, since $R \gg 1$, we can assume that $G_k$ is disjoint from $G_{k'}$ for $k \neq k'$.  

To prove Statement (1), we will describe an explicit isotopy of $G_k$ into the boundary of $Q_M$.  Consider the family of surfaces
\[\left(\frac{1}{M} y,\frac{1}{M}(t-1)x + kR, x, (1 - t)y + tM + (1-t)kR \right)\]
for $t \in [0,1]$ and $|x|,|y| \leq \frac{1}{M}$.  This is precisely $G_k$ when $t = 0$ and is equal to 
\[\left(\frac{1}{M} y,kR, x,M\right)\]
when $t = 1$.  In particular, it is smoothly isotopic into the boundary, hence it is a trivial disk.  

To prove Statements (2)-(4), first note that the projection $\CC^2 \rightarrow \RR^2$ restricts to a diffeomorphism $G_k \rightarrow \RR^2$ on each component. Thus, the standard trisection of $\RR^2$ stratifies $G$.  In addition, the isotopy constructed for Part (1) also trivializes the disk tangles in each sector $Z_{\lambda}$ and the tangles in each handlebody $H_{\lambda}$.
\end{proof}

\begin{corollary}
\label{cor:infinite-stein}
Let $p: \widetilde{X} \rightarrow Q_M$ be the holomorphic branched covering determined by the map $\rho: \pi_1(Q_M \setminus G) \rightarrow S_{n+1}$ that sends the meridian of the $k^{\text{th}}$-disk to the transposition $(k \, k+1)$.  Then
\begin{enumerate}
\item $\widetilde{X}$ is diffeomorphic to $B^4$.
\item the pullback trisection $\widetilde{\cT} = p^{-1}(\cT_0)$, where $\cT_0$ is the trisection construction in Proposition \ref{thrm:b4-stein-standard} is diffeomorphic to the standard relative trisection of $B^4$.
\end{enumerate}
\end{corollary}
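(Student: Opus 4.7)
The plan is to exploit the triviality of the branch locus $\cG$ to reduce the calculation to a branched cover of a disk, and then identify the pullback trisection by direct inspection. By Proposition \ref{prop:infinite-family-Stein-B4}, $\cG \cap Q_M$ is an unlinked collection of trivial disks, so a smooth ambient isotopy of $Q_M$ carries $\cG$ onto a product tangle $\{p_1, \dots, p_n\} \times D^2$ in a product decomposition $Q_M \cong D^2 \times D^2$. Under this identification, $p : \widetilde{X} \to Q_M$ becomes the product of the $(n+1)$-fold simple branched cover $q : \widetilde{D^2} \to D^2$ over $\{p_1, \dots, p_n\}$ with the identity on the second factor.

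For Part (1), the transpositions $(1,2), \dots, (n, n+1)$ generate the transitive subgroup $S_{n+1}$, so $\widetilde{D^2}$ is connected. Riemann--Hurwitz gives
\[\chi(\widetilde{D^2}) = (n+1)\chi(D^2) - n = 1,\]
and $\widetilde{D^2}$ is a connected, orientable surface with boundary of Euler characteristic one, hence a disk. Therefore $\widetilde{X} \cong D^2 \times D^2 \cong B^4$.

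For Part (2), Theorem \ref{thrm:bridge-branch-tri} already guarantees that $\widetilde{\cT} = p^{-1}(\cT_0)$ is a relative trisection of $\widetilde{X}$, so the task is to identify it with the standard relative trisection of $B^4$. Applying the product argument stratum by stratum: the central surface $\widetilde{\Sigma}$ covers the disk $\Sigma$ over $n$ points and is again a disk; each sector $\widetilde{Z}_{\lambda}$ covers the 4-ball $Z_{\lambda}$ over a trivial disk tangle and is again a $B^4$; each handlebody $\widetilde{H}_{\lambda}$ covers $H_{\lambda} \cong B^3$ over a trivial tangle and is again a $B^3$. Thus $\widetilde{\cT}$ is a $(0,0;0,1)$-relative trisection. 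Moreover, the boundary open book monodromy is the product $(1,2)(2,3) \cdots (n, n+1)$, an $(n+1)$-cycle, so the binding lifts to a single circle and the pages lift to disks, recovering the same trivial open book induced by the standard trisection.

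The main obstacle is the final rigidity step: concluding that two $(0,0;0,1)$-relative trisections of $B^4$ inducing the same open book on $S^3$ are diffeomorphic without stabilization. This can be established either by invoking uniqueness of genus-0 relative trisections of $B^4$, or more concretely by noting that, after the product isotopy carrying $\cG$ to standard form, $\cT_0$ is the pullback of the coordinate-projection trisection of the base $D^2$. Its lift under $\widetilde{D^2} \to D^2$ is then visibly the standard trisection of $\widetilde{D^2}$, and taking the product with the second factor $D^2$ recovers Definition \ref{def:std-tri-b4} verbatim.
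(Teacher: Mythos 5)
Your proposal is correct and follows essentially the same route as the paper: reduce to the $(n+1)$-fold simple branched cover $q:\widetilde{D^2}\to D^2$ over $n$ points, verify that $\widetilde{D^2}$ is a disk, and then lift the trisection stratum-by-stratum using the product structure of trivial tangles and disk-tangles. The only cosmetic difference is that you establish $\widetilde{D^2}\cong D^2$ via Riemann--Hurwitz and the $(n+1)$-cycle boundary monodromy, whereas the paper produces the same conclusion from the explicit polynomial model $f(x) = x^{n+1} - \epsilon(n+1)x$; both the paper and you then conclude by the same appeal to the fact that a $(0,0;0,1)$-relative trisection of $B^4$ is the standard one.
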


\begin{proof}
All the statements depend on the following observation.  Let $f(x) = x^{n+1} - \epsilon(n+1)x$ and consider the map $f: \CC \rightarrow \CC$.  This is an $(n+1)$-fold branched covering ramified at the $n^{\text{th}}$-roots of $\epsilon$.  The preimage $f^{-1}(\overline{\DD})$ is homeomorphic to a disk.  Moreover, this branched covering is simple and determined by a map $\rho': \pi_1(\DD \setminus \{\text{$n$ points}\}) \rightarrow S_{n+1}$ that agrees with $\rho$ up to conjugation.  

Recall that the trivial $n$-component tangle in $D^3$ can be thought of as taking $(D^2,\{\text{$n$ points}\})$ and crossing with the interval; similarly the trivial $n$-component disk tangle in $D^4$ is obtained by crossing $(D^2,\{\text{$n$ points}\})$ with two copies of the interval.  Therefore, crossing the branched covering $f$ with intervals, we get three branched coverings
\begin{align*}
    f:& \DD \rightarrow \DD \\
    f \times \text{id}: &\DD \times [0,1] \rightarrow \DD \times [0,1]\\
    f \times \text{id}: &\DD \times [0,1]^2 \rightarrow \DD \times [0,1]^2
\end{align*}
Up to diffeomorphism, these branched coverings are precisely the maps $p: \widetilde{X} \rightarrow Q_M$ as well as its restriction to each corresponding pair of strata of the trisections $\widetilde{\cT}$ and $\cT_0$.

In particular
\begin{align*}
    p^{-1}(\Sigma) & \cong D^2 &     p^{-1}(X_{\lambda}) & \cong D^4 \\
    p^{-1}(H_{\lambda}) & \cong D^3 &     p^{-1}(Q_M) & \cong D^4
\end{align*}
and $\widetilde{\cT}$ is a $(0,0;0,1)$-relative trisection of $D^4$, which is the standard trisection of the 4-ball.
\end{proof}

\subsection{Pleating}

Finally, we modify the branched coverings to induce stabilizations of the Stein trisections.  In particular, we can perform a bridge perturbation by an isotopy that pleats the branch locus.  This is accomplished by replacing the linear graphs of the previous subsection with cubic graphs.

Set
\begin{align*}
    \cC &= (i \epsilon (z^3 - z), z) \\
    &= (\epsilon(y^3 + (1 - x^2)y), \epsilon (x^3 - (1 + y^2)x, x,y)
\end{align*}
and let $\cC_{\theta}$ denotes its rotation by the angle $\theta$. 

\begin{proposition}
\label{prop:holomorphic-pleating}
Choose $\lambda \in \{1,2,3\}$ and define $\theta_{\lambda} = \frac{2 \pi}{3} \lambda$.  The surface 
\[\widetilde{\cG} = \cC_{\theta_{\lambda}}  + \langle 0, 1 + \epsilon' \rangle\]
is in relative bridge position with respect to $\cT_0$.  Moreover, it is a  sector-$\lambda$ bridge perturbation of $\Gamma_{\theta_{\lambda}}$.
\end{proposition}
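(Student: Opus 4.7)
The plan is to verify the two assertions in order: first that $\widetilde{\cG}$ is in relative bridge position with respect to $\cT_0$, and second that it is obtained from $\Gamma_{\theta_\lambda}+\langle 0,1+\epsilon'\rangle$ via a sector-$\lambda$ bridge perturbation in the sense of Subsection \ref{sub:bridge-perturbation}.

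For bridge position, I would argue along the lines of Proposition \ref{prop:infinite-family-Stein-B4}. The surface $\widetilde{\cG}$ is a properly embedded holomorphic disk in $Q_M$. To see it is smoothly a trivial disk, I would use the one-parameter family $\cC_t = (-i\epsilon z + it\epsilon z^3, z)$ for $t\in[0,1]$, rotated by $\theta_\lambda$ and translated by $\langle 0,1+\epsilon'\rangle$, which connects $\Gamma_{\theta_\lambda}+\langle 0,1+\epsilon'\rangle$ (at $t=0$) to $\widetilde{\cG}$ (at $t=1$), and then concatenate with the explicit straight-line isotopy into $\partial Q_M$ from Proposition \ref{prop:infinite-family-Stein-B4}.

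To analyze the intersections with the strata of $\cT_0$, I would project via the coordinate map $\pi:\CC^2\to\RR^2$. Unlike the linear graph, the projection of $\cC$ has a semicubical cusp at the origin, matching exactly the in-coordinates model of a bridge perturbation recalled in Subsection \ref{sub:bridge-perturbation}. The rotation by $\theta_\lambda = \tfrac{2\pi}{3}\lambda$ aligns the cusp so that its open interior points into the sector $D^s_\lambda$ of the trisected 2-disk, and the translation $\langle 0,1+\epsilon'\rangle$ places the cusp in the interior of the 4-dimensional sector $Z_\lambda$. From this description one reads off that $\widetilde{\cG}\cap\Sigma$ consists of three points (two more than $\Gamma_{\theta_\lambda}\cap\Sigma$ has), that the extra sheets of the pleat contribute additional arcs in $H_{\lambda+1}$ and $H_{\lambda-1}$ but not in $H_\lambda$, and that the extra disk component of the disk-tangle lies in $Z_\lambda$. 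Triviality of each tangle and disk-tangle then follows by an explicit isotopy in each stratum analogous to the one constructed in Proposition \ref{prop:infinite-family-Stein-B4}.

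For the second assertion, the scaling isotopy $\cC_t$ already realizes the bridge perturbation by inspection: its support is a neighborhood of a single bridge point of $\Gamma_{\theta_\lambda}$, and within that neighborhood it coincides, up to a local biholomorphism, with the pleating $\cK' = \{x = z^3 - yz\}$ from Subsection \ref{sub:bridge-perturbation}. The cusp opens into sector $\lambda$ by the choice of rotation angle, so the finger move goes into the correct handlebody and the new disk component lands in the correct 4-dimensional sector. The main technical obstacle I anticipate is the bookkeeping of orientation conventions: one must check that the cyclic indexing $\lambda \mapsto \theta_\lambda = \tfrac{2\pi}{3}\lambda$ of the sectors in the 2-disk agrees with the indexing used in the paper to label sector-$\lambda$ finger perturbations, so that $\widetilde{\cG}$ is genuinely a sector-$\lambda$ perturbation rather than a sector-$(\lambda\pm 1)$ perturbation.
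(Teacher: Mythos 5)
Your proof takes essentially the same route as the paper's: reduce to a local picture near the bridge point, project, and match against the local model of a finger perturbation from Subsection \ref{sub:bridge-perturbation}. The paper's proof is terser and slightly more direct: it reduces to $\lambda=3$ by cyclic symmetry (the rotation by $\theta_3 = 2\pi$ is trivial), then projects $\CC^2$ to $\RR\times\CC\cong\RR^3$ by discarding the \emph{first imaginary} coordinate, so $\widetilde{\cG}$ becomes the graph $(\epsilon(y^3 + (1 - x^2)y), x, y)$, which is literally the 3-dimensional in-coordinates model $\{x = z^3 - yz\}$. You instead project all the way to $\RR^2$ and argue via the cusp structure of the projection restricted to the surface. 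This captures the same phenomenon (the fold/cusp pattern is the signature of a pleat), but it is one dimension less than the model in Subsection \ref{sub:bridge-perturbation}, so to close the argument you would still need to lift back to $\RR^3$ --- which is what you implicitly do when you say the surface coincides with $\{x = z^3 - yz\}$ ``up to a local biholomorphism.'' (Minor quibble: the relevant equivalence is a smooth local change of coordinates, not a biholomorphism, since $\{x = z^3 - yz\}$ is not a holomorphic model.)

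Two small cautions. First, you flag the orientation-convention bookkeeping (which sector the cusp opens into) as an ``anticipated obstacle'' without resolving it; the paper resolves it by doing the explicit computation in the representative case $\lambda=3$ and invoking cyclic symmetry, and something of that sort is needed to distinguish a sector-$\lambda$ perturbation from a sector-$(\lambda\pm1)$ one. Second, your claim that the projection of $\cC$ has a semicubical cusp ``at the origin'' is off: writing out $\mathrm{Re}(i\epsilon(z^3 - z)) = \epsilon(y^3 + y - 3x^2 y)$, the fold locus of the projection $(x,y)\mapsto(\epsilon(y^3 + y - 3x^2 y), x)$ is the hyperbola $3x^2 - 3y^2 = 1$, and the cusp points sit at $(x,y)=(\pm 1/\sqrt{3},0)$; this is precisely why the translation term $\langle 0, 1+\epsilon'\rangle$ appears --- to place the cusp at the desired bridge point. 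Neither issue is fatal, and the overall structure of your argument agrees with the paper's.
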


\begin{proof}
We assume that $\lambda = 3$ so $\theta = 2\pi$ and the rotation is trivial.  The remaining cases follow by cyclic symmetry.

Projecting away the first imaginary coordinate, we can view the image of $\widetilde{\cG} \subset \RR \times \CC$, which is
\[(\epsilon (y^3 + (1 - x^2)y,x,y)\]
This is exactly the local model of a finger perturbation (Section \ref{sub:bridge-perturbation}).
\end{proof}

\subsection{Proof of main theorem}

We can now prove Theorem \ref{thrm:Stein-B4}.

\begin{proof}[Proof of Theorem \ref{thrm:Stein-B4}]
Let $\cT$ be a relative trisection of $B^4$ that is obtained from the standard trisection of $B^4$ by $n = n_1 + n_2 + n_3$ interior stabilizations, where $k_{\lambda}$ is the number of stabilizations in sector $\lambda$.

By Corollary \ref{cor:infinite-stein}, we have an $(n+1)$-fold holomorphic branched covering
\[p: \widetilde{X} \rightarrow Q_M\]
that pulls back the standard Stein trisection on $Q_M$ to a Stein structure on the standard relative trisection of $\widetilde{X} \cong B^4$.  This branched covering is ramified over $n$ disks in relative bridge position.  By Proposition \ref{prop:holomorphic-pleating}, we can modify the branch locus by bridge perturbations.  Specifically, for each $\lambda$, we can choose $n_{\lambda}$ disks and perform a sector-$\lambda$ bridge perturbation.

This yields a new branched covering
\[p': \widetilde{X}' \rightarrow Q_M\]
Since the branch locus is still holomorphic, Theorem \ref{thrm:hol-branched-pullback-tri} implies that $\pi'^{-1}(\cT)$ is a Stein trisection and Theorem \ref{thrm:bridge-branch-tri} implies that this smooth trisection is obtained by a $(n_1,n_2,n_3)$ interior stabilization of the standard relative trisection.
\end{proof}



\section{Reducible trisections}

Recall that trisections can be interpreted as 4-dimensional analogues of Heegaard splittings of 3-manifolds.  Meier, Schirmer and Zupan extended the well-known notion of {\it reducible Heegaard splittings} to define {\it reducible trisections} \cite{MSZ}.

\subsection{Reducible Heegaard splittings}

First, we recall the definition of reducibility in dimension 3 and state an easy result for parallelism.  

\begin{definition}
A Heegaard splitting $Y = H_{\alpha} \cup_{\Sigma} H_{\beta}$ is {\it reducible} if there is an essential simple closed curve $\delta \subset \Sigma$ that bounds a disk in both handlebodies.
\end{definition}

Note that if a Heegaard splitting is reducible, then the union of the two disks bounded by $\delta$ is an embedded 2-sphere bisected by the Heegaard splitting. Moreover, this 2-sphere is uniquely determined up to isotopy by the curve $\delta$. 

Conversely, given any 2-sphere we can find a Heegaard splitting that bisects it.

\begin{lemma}
\label{lemma:reducibleHeegaard}
Suppose that $S \subset Y$ is a separating 2-sphere.  There exists a reducible Heegaard splitting $Y = H_{\alpha} \cup_{\Sigma} H_{\beta}$ such that $S$ intersects $\Sigma$ along a separating, essential simple closed curve $\delta$.
\end{lemma}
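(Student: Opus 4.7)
The plan is to build the Heegaard splitting from compatible splittings on the two sides of $S$. Write $Y = Y_1 \cup_S Y_2$, and cap each $Y_i$ with a $3$-ball $B_i$ to form closed $3$-manifolds $\hat Y_i$. I would choose Heegaard splittings $\hat Y_i = H_i^{\alpha} \cup_{\hat\Sigma_i} H_i^{\beta}$ of positive genus; these exist because any Heegaard splitting of $\hat Y_i$ can be stabilized. After an ambient isotopy I can assume each capping ball $B_i$ is a standard neighborhood of a chosen point $p_i \in \hat\Sigma_i$, so that $\partial B_i$ meets $\hat\Sigma_i$ in a single equatorial circle $\delta_i$ and each handlebody $H_i^{\lambda}$ meets $B_i$ in a half-ball bounded by one of the two hemispheres.

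Removing the interior of $B_i$ then recovers $Y_i$, and the Heegaard surface $\hat\Sigma_i$ restricts to a once-punctured surface $\Sigma_i \subset Y_i$ with $\partial\Sigma_i = \delta_i \subset S$. Each piece $H_i^{\lambda} \setminus \mathrm{int}(B_i)$ meets $S$ in a disk $D_i^{\lambda}$ bounded by $\delta_i$. Next, I reassemble $Y = Y_1 \cup_S Y_2$ using a gluing map of the boundary spheres that identifies $\delta_1$ with $\delta_2$ and sends $D_1^{\alpha} \to D_2^{\alpha}$, $D_1^{\beta} \to D_2^{\beta}$. Setting $\Sigma = \Sigma_1 \cup_\delta \Sigma_2$ and $H_{\lambda} = (H_1^{\lambda} \setminus \mathrm{int}\,B_1) \cup_{D^{\lambda}} (H_2^{\lambda} \setminus \mathrm{int}\,B_2)$ produces a decomposition $Y = H_{\alpha} \cup_{\Sigma} H_{\beta}$: each $H_{\lambda}$ is a boundary-connected sum of two handlebodies along a disk, hence again a handlebody. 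By construction $S \cap \Sigma = \delta$ bounds the disk $D^{\lambda} \subset H_{\lambda}$ for each $\lambda$, so the splitting is reducible. Moreover $\delta$ is separating in $\Sigma$, and the positive-genus assumption on the $\hat\Sigma_i$ ensures neither complementary piece of $\Sigma \setminus \delta$ is a disk, so $\delta$ is essential.

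The main obstacle is verifying that this reassembly reproduces the original pair $(Y, S)$ rather than some other $3$-manifold with an embedded $2$-sphere. This reduces to two standard facts: any orientation-preserving self-diffeomorphism of $S^2$ is isotopic to the identity, so the identification of the two boundary spheres is canonical up to isotopy and can be arranged to agree with the original gluing of $Y_1$ to $Y_2$ along $S$; and the capping ball $B_i$ inside $\hat Y_i$ is unique up to ambient isotopy, so isotoping $B_i$ to a standard neighborhood of $p_i$ does not change the diffeomorphism type of the splitting of $\hat Y_i$. Together these let me identify the constructed Heegaard splitting with one living on the original pair $(Y, S)$, completing the argument.
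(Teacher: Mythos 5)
Your proof is correct and follows essentially the same route as the paper: decompose $Y = Y_1 \#_S Y_2$, take Heegaard splittings of the (capped-off) summands, and form the connected sum of splittings so that $S$ meets the resulting Heegaard surface in the connect-sum circle $\delta$. You supply details the paper leaves implicit — stabilizing to positive genus so that $\delta$ is essential, and the uniqueness of the capping ball and of the $S^2$-gluing needed to recover the original pair $(Y,S)$ — which is a welcome strengthening rather than a deviation.
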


\begin{proof}
The 2-sphere $S$ determines a connected sum decomposition $Y = Y_1 \#_S Y_2$.  We can construct the required Heegaard splitting of $Y$ by finding Heegaard splittings $Y_1 = H_{\alpha,1} \cup_{\Sigma_1} H_{\beta,1}$ and $Y_2 = H_{\alpha,2} \cup_{\Sigma_2} H_{\beta_2}$ and taking their connected sum.  Up to homeomorphism, the separating 2-sphere $S$ will intersect $\Sigma$ along the curve $\gamma$ that determines the connected sum $\Sigma = \Sigma_1 \#_{\delta} \Sigma_2$.
\end{proof}

\begin{remark}
Note that Haken's Lemma implies something stronger: if $S$ is an essential 2-sphere in $Y$, then {\it every} Heegaard splitting of $Y$ is reducible.  Currently, there is no 4-dimensional generalization of Haken's lemma to trisections of 4-manifolds.
\end{remark}

\subsection{Reducible trisections}

\begin{definition}
\label{def:std-B3}
Let $S \subset \#_k S^1 \times S^2$ be a smoothly embedded 2-sphere and let $B \subset \natural_k S^1 \times B^3$ be a properly embedded, smooth 3-ball with boundary $S$.  We say that $B$ is {\it standard} if $\#_k S^1 \times S^2 \setminus B$ has a handle decomposition consisting of only 0- and 1-handles.
\end{definition}

\begin{lemma}
\label{lemma:std-b3-unique}
Let $S \subset \#_k S^1 \times S^2$ be a smoothly embedded 2-sphere.
\begin{enumerate}
\item $S$ bounds a standard $B^3$ in $\natural_k S^1 \times B^3$.
\item Let $B_1,B_2$ be standard 3-balls with boundary $S$.  Then there exists a diffeomorphism of pairs $(\natural_k S^1 \times B^3,B_1) \cong (\natural_k S^1 \times B^3,B_2)$ that is the identity on the boundary.
\end{enumerate}
\end{lemma}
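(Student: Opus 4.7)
The plan is to prove part (1) via Laudenbach's classification of 2-spheres in $\#_k S^1 \times S^2$, and part (2) via a Laudenbach--Po\'enaru style extension argument. For part (1), Laudenbach's theorem asserts that every smoothly embedded 2-sphere in $\#_k S^1 \times S^2$ is isotopic either to a trivial 2-sphere (bounding a 3-ball in the boundary) or to a standard belt 2-sphere $\{pt\} \times S^2$ of one of the $S^1 \times S^2$ summands. Applying the isotopy extension theorem in $\natural_k S^1 \times B^3$ via a boundary collar, we may assume $S$ is in one of these positions. In the trivial case, push the bounding 3-ball slightly into the interior of $Z_k := \natural_k S^1 \times B^3$ through the collar to produce $B$; the complement is a disjoint union of $Z_k$ with a 4-ball, which is a 1-handlebody in the generalized sense (multiple 0-handles allowed). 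In the belt-sphere case, take $B$ to be the cocore $\{pt\} \times B^3$ of the corresponding 1-handle, whose complement is diffeomorphic to $Z_{k-1}$.

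For part (2), set $W_i := Z_k \setminus \nu(B_i)$ for $i = 1, 2$. The key observation is that the closed 3-manifold $\partial W_i$ depends only on the pair $(\partial Z_k, S)$, not on the choice of $B_i$: it is obtained from $\partial Z_k \setminus \nu(S)$ by capping off its two boundary 2-spheres with two 3-balls corresponding to the two sides of $B_i$. Since 3-balls are unique up to diffeomorphism rel boundary by Alexander's theorem, there is a canonical diffeomorphism $\phi: \partial W_1 \to \partial W_2$ that fixes $\partial Z_k \setminus \nu(S)$ pointwise and identifies the two copies of $B_1$ in $\partial W_1$ with the two copies of $B_2$ in $\partial W_2$.

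By hypothesis both $W_i$ are 1-handlebodies. Applying the Laudenbach--Po\'enaru theorem componentwise (noting that $W_i$ may be disconnected when $S$ separates $\partial Z_k$), the boundary diffeomorphism $\phi$ extends to a diffeomorphism $\Phi: W_1 \to W_2$. Finally, extend $\Phi$ across $\nu(B_i) \cong B^3 \times [-1,1]$ by interpolation in the interval factor---possible because $\mathrm{Diff}(B^3, \partial B^3)$ is connected---yielding an extension that sends $B_1 = B^3 \times \{0\}$ to $B_2 = B^3 \times \{0\}$ and is the identity on the $\partial Z_k$ portion of $\partial \nu(B_i)$. The resulting diffeomorphism of pairs $(Z_k, B_1) \to (Z_k, B_2)$ fixes $\partial Z_k$ as required. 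The principal obstacle is the reliance on two deep results---Laudenbach's sphere classification for part (1) and Laudenbach--Po\'enaru for part (2)---but both are standard in the topology of these handlebodies, and the remainder is bookkeeping to ensure the constructed diffeomorphism has the desired behavior.
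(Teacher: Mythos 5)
Your part (2) is essentially the paper's argument (a one-line appeal to Laudenbach--Po\'enaru), carefully unpacked, and the bookkeeping with $\partial W_i$, componentwise application of LP, and the interpolation over $\nu(B_i)\cong B^3\times[-1,1]$ is fine. The problem is part (1): the classification you attribute to Laudenbach is not correct, and the cases you omit are exactly the ones this lemma is needed for. It is \emph{not} true that every embedded 2-sphere in $\#_k S^1\times S^2$ is isotopic to either a trivial sphere or a belt sphere $\{pt\}\times S^2$ of a summand. For example, the sphere realizing the splitting $\#_2 S^1\times S^2 = (S^1\times S^2)\,\#\,(S^1\times S^2)$ is essential and separating, hence isotopic to neither (belt spheres are non-separating, trivial spheres bound balls); likewise there are non-separating spheres homologous to sums of several belt spheres. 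Lemma \ref{lemma:prime-decomp} of the paper shows that the spheres arising in the reconstruction are precisely separating spheres inducing splittings $M_{k}=M_{j_1}\#M_{j_2}$ with both $j_i$ possibly positive, so your dichotomy misses the case that matters.

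The correct input from Laudenbach is a normalization statement, not a two-case classification: after handle slides and isotopies one can make $S$ disjoint from the belt spheres of the $k$ four-dimensional 1-handles, hence push $S$ into the boundary $S^3$ of the 0-handle (a punctured $S^3$ inside $\#_k S^1\times S^2$). The paper then produces the standard $B^3$ uniformly in all cases: introduce a canceling 0/1-handle pair so that the belt sphere of the new 1-handle is $S$; its cocore is a properly embedded $B^3$ with boundary $S$, and its complement is built from the two 0-handles and the original $k$ 1-handles, so it satisfies Definition \ref{def:std-B3}. Your argument would need to be replaced by (or augmented with) this construction to cover general, in particular essential separating, spheres.
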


\begin{proof}
Identify $\#_k S^1 \times S^2$ with $\del \natural_k S^1 \times B^3$.  The latter has a handle decomposition consisting of one 0-handle and $k$ 1-handles.  By a result of Laudenbach \cite{Laudenbach}, we can perform a sequence of handle slides and isotopies so that $S$ is disjoint from the belt spheres of the 1-handles. In particular, we can assume $S$ lies in the boundary of the 0-handle.  Now we can add a canceling pair of 0/1-handles and isotope $S$ to agree with the belt sphere of the 1-handle.  The standard $B^3$ it bounds is the cocore of this 1-handle.

Part (2) is a restatement of the well-known theorem of Laudenbach and Po\'enaru \cite{LP} that the 3- and 4-handles of a smooth 4-manifold are attached uniquely, up to diffeomorphism.
\end{proof}

\begin{definition}[\cite{MSZ}]
\label{def:reducible}
A trisection $\cT$ of $X$ is {\it reducible} if there exists an essential simple closed curve $\delta$ in $\Sigma$ that bounds a disk $V_{\lambda}$ in each 3-dimensional handlebody $H_{\lambda}$ of the trisection. 
\end{definition}

Applying Lemma \ref{lemma:std-b3-unique} in each sector of a reducible trisection, we obtain the following proposition. 

\begin{proposition}
Let $\cT$ be a reducible trisection.  Let $\delta$ denote a reducing curve and let $V_{\lambda} \subset H_{\lambda}$ denote compressing disks bounded by $\delta$ Then
\begin{enumerate}
\item The union $S_{\lambda} = V_{\lambda} \cup V_{\lambda-1}$ is an embedded 2-sphere that bounds a standard 3-ball $B_{\lambda} \subset X_{\lambda}$.
\item The union $B_1 \cup B_2 \cup B_3$ is homeomorphic to $S^3$ and is exactly the standard trisection of $S^3$
\end{enumerate}
Moreover this 3-sphere is unique up to diffeomorphism.
\end{proposition}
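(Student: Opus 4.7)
The plan is to establish Part (1) by identifying the intersections of $V_\lambda$ and $V_{\lambda+1}$ and invoking Lemma \ref{lemma:std-b3-unique}, and then to deduce Part (2) by matching the resulting decomposition with the standard trisection of $S^3$ recalled in Example \ref{ex:std-tri-s3}.

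First, I would observe that since $V_\lambda \subset H_\lambda$ and $V_{\lambda+1} \subset H_{\lambda+1}$, both disks sit inside $\partial X_\lambda = H_\lambda \cup_\Sigma H_{\lambda+1}$ and share the common boundary circle $\delta$. Because $H_\lambda \cap H_{\lambda+1} = \Sigma$, and $V_\lambda \cap \Sigma = V_{\lambda+1} \cap \Sigma = \delta$, the intersection $V_\lambda \cap V_{\lambda+1}$ is exactly $\delta$. Thus $S_\lambda = V_\lambda \cup V_{\lambda+1}$ is an embedded 2-sphere in $\partial X_\lambda \cong \#_{k_\lambda} S^1 \times S^2$. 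By Lemma \ref{lemma:std-b3-unique}(1), $S_\lambda$ bounds a standard 3-ball $B_\lambda$ in $X_\lambda \cong \natural_{k_\lambda}(S^1 \times B^3)$, proving Part (1).

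For Part (2), I would compute the pairwise and triple intersections of the $B_\lambda$. Since $B_\lambda \cap \partial X_\lambda = S_\lambda$ and $B_\lambda \cap B_{\lambda+1} \subset X_\lambda \cap X_{\lambda+1} = H_{\lambda+1}$, one obtains $B_\lambda \cap B_{\lambda+1} = S_\lambda \cap H_{\lambda+1} = V_{\lambda+1}$. Intersecting further with $B_{\lambda-1}$, the triple intersection lies in $V_{\lambda+1} \cap \Sigma = \delta$, and conversely $\delta = \partial V_\mu \subset B_\mu$ for every $\mu$, so $B_1 \cap B_2 \cap B_3 = \delta \cong S^1$. Therefore $B_1 \cup B_2 \cup B_3$ is a union of three 3-balls glued pairwise along the disks $V_1, V_2, V_3$, all meeting in a common circle $\delta$ --- exactly the combinatorial structure of the standard trisection of $S^3$ from Example \ref{ex:std-tri-s3}. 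Hence the union is diffeomorphic to $S^3$ with its standard trisection. Uniqueness up to diffeomorphism then follows from Lemma \ref{lemma:std-b3-unique}(2) applied sectorwise: the standard 3-ball in each $X_\lambda$ bounded by a given $S_\lambda$ is determined up to diffeomorphism rel boundary, and gluing these diffeomorphisms produces a diffeomorphism of $X$ carrying one choice of $B_1 \cup B_2 \cup B_3$ to another.

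The main obstacle I anticipate is essentially combinatorial: carefully tracking the pairwise and triple intersections so that the decomposition of $B_1 \cup B_2 \cup B_3$ really matches the lens pattern of Example \ref{ex:std-tri-s3} rather than some other gluing of three 3-balls. All of the genuine topology --- the existence and uniqueness of a standard 3-ball bounded by an arbitrary 2-sphere in a 4-dimensional 1-handlebody --- is packaged into Lemma \ref{lemma:std-b3-unique}, which itself rests on the results of Laudenbach and Laudenbach--Po\'enaru, so once the intersection pattern is verified the conclusion is immediate.
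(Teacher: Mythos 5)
Your proof is correct and follows essentially the route the paper intends: the paper gives no written argument beyond the remark that the proposition follows by applying Lemma~\ref{lemma:std-b3-unique} in each sector, and your write-up supplies exactly those details (the intersection bookkeeping, existence and uniqueness of the standard 3-balls via Lemma~\ref{lemma:std-b3-unique}, and the identification with the lens decomposition of Example~\ref{ex:std-tri-s3}). The only discrepancy is notational: the proposition writes $S_\lambda = V_\lambda \cup V_{\lambda-1}$, while you use $V_\lambda \cup V_{\lambda+1}$, which is consistent with the paper's introduction and with the convention $H_\lambda = X_\lambda \cap X_{\lambda-1}$, so that $\partial X_\lambda = H_\lambda \cup_\Sigma H_{\lambda+1}$.
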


Conversely, given any 3-sphere in a 4-manifold, we can find a reducible trisection that trisects it.

\begin{lemma}
\label{lemma:std-tri-3sphere}
Let $X$ be a closed, smooth, oriented 4-manifold and $S$ denote a smoothly embedded, separating $S^3$ in $X$.  There exists a reducible trisection $\cT$ of $X$ such that $S$ intersects:
\begin{enumerate}
\item the central surface along an essential separating simple closed curve $\delta$,
\item each 3-dimensional handlebody $H_{\lambda}$ along a single disk $D_{\lambda}$ with boundary $\delta$,
\item each 4-dimensional handlebody $X_{\lambda}$ along a single standard 3-ball $B_{\lambda}$ with boundary $D_{\lambda} \cup D_{\lambda+1}$.
\end{enumerate}
In particular, $\cT$ induces the standard trisection on $S$.
\end{lemma}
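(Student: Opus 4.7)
The plan is to construct $\cT$ by trisecting each side of $S$ separately and then gluing the relative trisections along the common boundary. Since $S$ is separating, write $X = X_L \cup_S X_R$. Fix on $S \cong S^3$ the genus-zero open book from Example \ref{ex:std-tri-s3}, with unknot binding and disk pages. Because this open book is isomorphic to its own orientation-reverse (trivial monodromy is unchanged under inversion), it can be viewed simultaneously as the induced open book on $\del X_L$ and on $\del X_R$. By the existence theorem for relative trisections with prescribed boundary open book \cite{Gay-Kirby}, there exist relative trisections $\cT^L, \cT^R$ of $X_L, X_R$ realizing this open book. After sufficiently many interior stabilizations on each side, I may further assume that the central surfaces $\Sigma_L, \Sigma_R$ both have positive genus.

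The second step is to glue. For $\lambda = 1,2,3$, set
\[
X_\lambda := X_\lambda^L \cup X_\lambda^R, \qquad H_\lambda := H_\lambda^L \cup H_\lambda^R, \qquad \Sigma := \Sigma_L \cup \Sigma_R,
\]
each gluing taking place along the common intersection with $S$. Since the boundary open books match, these identifications are compatible. The closed trisection axioms then follow: each $X_\lambda$ is the boundary connected sum of two 1-handlebodies along the 3-ball $Y^0_{g,k_\lambda,0,1} \cong D^2 \times [0,1]$, hence itself a 1-handlebody; each $H_\lambda$ is obtained by gluing two compression bodies along their disk page to form a closed genus-$(g_L + g_R)$ handlebody; and $\Sigma$ is a closed surface obtained from $\Sigma_L$ and $\Sigma_R$ by identifying their boundary circles.

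Finally, the reducibility data are read off directly from the construction. By design, the induced decomposition of $S$ is exactly the standard trisection of $S^3$ of Example \ref{ex:std-tri-s3}. The curve $\delta := S \cap \Sigma$ is the common boundary circle $\del \Sigma_L = \del \Sigma_R$, which separates $\Sigma$ into $\Sigma_L$ and $\Sigma_R$; since both pieces have positive genus, $\delta$ is essential. Each $D_\lambda := S \cap H_\lambda$ is the page of the open book contained in $H_\lambda$, a disk with boundary $\delta$. Each $B_\lambda := S \cap X_\lambda = X_\lambda^L \cap X_\lambda^R$ is the 3-ball $D^2 \times [0,1]$ along which the sectors are glued, and its boundary is the 2-sphere $D_\lambda \cup D_{\lambda+1}$. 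It is standard in the sense of Definition \ref{def:std-B3}, because $X_\lambda \setminus B_\lambda = X_\lambda^L \sqcup X_\lambda^R$ is a disjoint union of 4-dimensional 1-handlebodies. The principal subtleties --- compatibility of the two boundary open books and essentiality of $\delta$ --- are handled respectively by the self-duality of the genus-zero open book on $S^3$ and by interior stabilization to boost the genera of $\Sigma_L$ and $\Sigma_R$.
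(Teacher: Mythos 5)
Your proof is correct, and the underlying strategy is the same as the paper's: decompose $X$ along $S$, trisect the two pieces, and reassemble so that $S$ is standardly trisected. The implementations differ, though. The paper caps off each side with a 4-ball to get the connected sum decomposition $X = X_1 \#_S X_2$, trisects each closed factor, and applies the connected sum of trisections (Definition \ref{def:connected-sum}), in which $S$ appears as the standardly trisected boundary of the removed ball neighborhoods; you instead work directly with the bounded pieces $X_L, X_R$, invoke the existence of relative trisections inducing the genus-zero open book on $S^3$, and glue. These produce the same object --- the connected sum of trisections is precisely the gluing of the two induced relative trisections of $X_i \setminus \nu(\mathrm{pt})$ --- so the difference is which existence/gluing machinery is cited. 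Your version has two advantages: you verify the gluing compatibility by hand (correctly noting that the only delicate point, matching the two boundary open books under the orientation-reversing identification, is harmless because the disk-page open book with trivial monodromy is symmetric), and you arrange via interior stabilization that both $\Sigma_L$ and $\Sigma_R$ have positive genus, so that $\delta$ is genuinely essential --- a hypothesis in the statement that the paper's one-line proof does not address (it fails, for instance, if one summand is given a genus-zero trisection). The verification that each $B_\lambda$ is standard in the sense of Definition \ref{def:std-B3}, via $X_\lambda \setminus B_\lambda = X_\lambda^L \sqcup X_\lambda^R$, is also a detail worth having made explicit.
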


\begin{proof}
As in the proof of Lemma \ref{lemma:reducibleHeegaard}, we take the connected sum decomposition $X = X_1 \#_S X_2$, then trisect each factor and take the connected sum of trisections.
\end{proof}

\begin{definition}
A smoothly embedded, separating 3-sphere $S \subset X$ is {\it standard with respect to a trisection $\cT$} if it intersects the trisection as in Lemma \ref{lemma:std-tri-3sphere}.
\end{definition}

\begin{lemma}
Let $\cT$ be a reducible trisection
\begin{enumerate}
\item Every stabilization of $\cT$ is reducible.
\item If a 3-sphere $S$ is standard with respect to $\cT$, then up to isotopy we can assume it is standard with respect to every stabilization of $\cT$.
\end{enumerate}
\end{lemma}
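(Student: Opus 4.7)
The plan is to exploit that a sector-$\mu$ stabilization of $\cT$ is a connected sum $\cT \# \cT_\mu$ performed at a chosen point $x$ of the central surface $\Sigma$, which affords two real dimensions of freedom for $x$. Since each reducing curve and the trace of $S$ on $\Sigma$ lie in a one-dimensional subset of $\Sigma$, a small perturbation places $x$ off the relevant data. Both parts will then follow from the observation that stabilization is a local surgery on a small ball neighborhood of $x$, so structures supported away from $x$ survive intact.

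For part (1), fix a reducing curve $\delta \subset \Sigma$ and compressing disks $V_\lambda \subset H_\lambda$ with $\partial V_\lambda = \delta$, and choose the stabilization point $x \in \Sigma \smallsetminus \delta$. The connected-sum surgery is supported in a small ball neighborhood of $x$, hence disjoint from $\delta$ and each $V_\lambda$. The curve $\delta$ persists in the new central surface $\Sigma' := \Sigma \# T^2$, and remains essential: by Van Kampen, $\pi_1(\Sigma)$ injects into $\pi_1(\Sigma')$, so an essential loop cannot become null-homotopic. Each $V_\lambda$ persists as an embedded disk in the stabilized handlebody $H_\lambda'$ with boundary $\delta$, so $\delta$ witnesses reducibility of the stabilized trisection.

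For part (2), assume $S$ is standard with respect to $\cT$, so $S \cap \Sigma = \delta$, $S \cap H_\lambda = D_\lambda$, and $S \cap X_\lambda = B_\lambda$ is a standard 3-ball. After a small ambient isotopy of $S$ we may assume $\delta$ avoids the stabilization point $x$, so the stabilization leaves the incidence pattern $(\delta, D_\lambda, B_\lambda)$ untouched. The only sector that changes as a 4-manifold is $X_\mu$, which becomes $X_\mu' \cong X_\mu \natural (S^1 \times B^3)$ with the summing done away from $B_\mu$, so
\[
X_\mu' \smallsetminus B_\mu \;\cong\; (X_\mu \smallsetminus B_\mu) \,\natural\, (S^1 \times B^3).
\]
Boundary connect sum with $S^1 \times B^3$ adjoins exactly one 1-handle to any handle decomposition, so a handle decomposition consisting of only 0- and 1-handles is preserved. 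Hence $B_\mu$ remains standard and $S$ is standard with respect to the stabilized trisection. The only subtle point is this preservation-of-standardness check for $B_\mu$; everything else reduces to the inequality $\dim \delta = 1 < 2 = \dim \Sigma$ together with the locality of the stabilization operation.
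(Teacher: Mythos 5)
Your proof is correct and follows the same approach as the paper, which simply observes that the stabilization is supported in a neighborhood of a point of $\Sigma$ that can be chosen away from $\delta$ (and hence away from $S$); you have filled in the details the paper leaves implicit, including the check that $\delta$ stays essential and that standardness of $B_\mu$ survives the boundary connected sum with $S^1 \times B^3$.
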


\begin{proof}
Stabilization is well-defined and can be assumed to happen in the neighborhood of some point away from the curve $\delta$ (see the discussion after \cite[Definition 8]{Gay-Kirby}).
\end{proof}

Finally, standard 3-spheres are determined up to diffeomorphism by their intersection with the spine of the trisection.

\begin{proposition}
Suppose that $S_1,S_2$ are standard $S^3$'s in $X$ with respect to a trisection $\cT$ and suppose that $S_1 \cap \Sigma$ and $S_2 \cap \Sigma$ are isotopic in $\Sigma$ to the same simple closed curve $\delta$.  There is a diffeomorphism $\phi: X \rightarrow X$ fixing $\cT$ and such that $\phi(S_1) = S_2$.
\end{proposition}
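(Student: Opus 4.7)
The plan is to construct the diffeomorphism $\phi$ stratum-by-stratum, matching the trisection structure: start by matching the reducing curves on $\Sigma$, extend across each 3-dimensional handlebody $H_\lambda$ to match the compressing disks of $S_1$ and $S_2$, and finally extend across each 4-dimensional sector $X_\lambda$ to match the standard 3-balls. At each stage, uniqueness comes from a classical result in one dimension higher than the stratum.

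First, I would work on $\Sigma$. Since $\delta_1 = S_1 \cap \Sigma$ and $\delta_2 = S_2 \cap \Sigma$ are isotopic essential simple closed curves in $\Sigma$, the isotopy extension theorem for surfaces supplies a diffeomorphism $\phi_\Sigma : \Sigma \to \Sigma$, isotopic to the identity, with $\phi_\Sigma(\delta_1) = \delta_2$. Next, I would extend $\phi_\Sigma$ into each 3-dimensional handlebody $H_\lambda$. Using the isotopy extension theorem again, first extend $\phi_\Sigma$ to some diffeomorphism $\psi_\lambda : H_\lambda \to H_\lambda$ isotopic to the identity. The image $\psi_\lambda(D_{\lambda,1})$ is then a properly embedded essential disk in $H_\lambda$ with boundary $\delta_2$, and $D_{\lambda,2}$ is another such disk. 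A classical result on handlebodies (essential properly embedded disks with isotopic boundary are isotopic rel boundary) then provides an ambient isotopy of $H_\lambda$, rel $\Sigma$, carrying $\psi_\lambda(D_{\lambda,1})$ onto $D_{\lambda,2}$. Composing yields $\Psi_\lambda : H_\lambda \to H_\lambda$ with $\Psi_\lambda|_\Sigma = \phi_\Sigma$ and $\Psi_\lambda(D_{\lambda,1}) = D_{\lambda,2}$.

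The three diffeomorphisms $\Psi_1, \Psi_2, \Psi_3$ agree on $\Sigma$, so they glue to a diffeomorphism of the spine that restricts on each $\partial X_\lambda = H_\lambda \cup_\Sigma H_{\lambda+1}$ to a diffeomorphism taking $S_{\lambda,1} = D_{\lambda,1} \cup D_{\lambda+1,1}$ to $S_{\lambda,2}$. To extend across the 4-dimensional sector $X_\lambda \cong \natural_{k_\lambda} S^1 \times B^3$, I would invoke the Laudenbach--Po\'enaru theorem, which guarantees that any self-diffeomorphism of $\#_{k_\lambda} S^1 \times S^2$ extends to a self-diffeomorphism $\Phi_\lambda$ of $X_\lambda$. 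Then $\Phi_\lambda(B_{\lambda,1})$ and $B_{\lambda,2}$ are both standard 3-balls in $X_\lambda$ with the same boundary 2-sphere $S_{\lambda,2}$, so Lemma \ref{lemma:std-b3-unique}(2) produces a self-diffeomorphism $\Xi_\lambda$ of $X_\lambda$, equal to the identity on $\partial X_\lambda$, with $\Xi_\lambda(\Phi_\lambda(B_{\lambda,1})) = B_{\lambda,2}$. The composition $\Xi_\lambda \circ \Phi_\lambda$ is the desired diffeomorphism on $X_\lambda$.

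Finally I would glue the three maps $\Xi_\lambda \circ \Phi_\lambda$ for $\lambda = 1,2,3$ along the spine; since they were built so as to agree on $\partial X_\lambda$ with the common spine diffeomorphism, this yields a global $\phi : X \to X$ preserving the trisection $\cT$ setwise and mapping $S_1 = B_{1,1} \cup B_{2,1} \cup B_{3,1}$ onto $S_2 = B_{1,2} \cup B_{2,2} \cup B_{3,2}$. The main obstacle is the passage from 3 to 4 dimensions: nothing like the disk-uniqueness in handlebodies holds for the 3-ball $B_\lambda$ inside $X_\lambda$ without the standardness hypothesis, and it is precisely the combination of Laudenbach--Po\'enaru and the standard-3-ball uniqueness of Lemma \ref{lemma:std-b3-unique}(2) that bridges the gap. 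The rest is a straightforward inductive extension up the stratification, with each step relying only on well-established tools.
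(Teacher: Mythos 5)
Your proof is correct and follows essentially the same strategy as the paper's: use uniqueness of compressing disks in handlebodies to align the spines, then apply Lemma \ref{lemma:std-b3-unique}(2) sector-by-sector, and glue. The only organizational difference is that the paper first isotopes $S_1$ so that it agrees with $S_2$ in a neighborhood of the trisection spine and then only needs a diffeomorphism of each sector rel boundary, whereas you build an explicit nontrivial spine diffeomorphism and then invoke Laudenbach--Po\'enaru to push it into each $X_\lambda$ before correcting by Lemma \ref{lemma:std-b3-unique}(2); these are two phrasings of the same argument and the extra appeal to Laudenbach--Po\'enaru is not a gap, since it is precisely the ingredient underlying the lemma anyway.
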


\begin{proof}
By assumption, the curve $\delta$ bounds a disk $V_{\lambda}$ in $H_{\lambda}$ that is unique up to isotopy.  Thus, by an isotopy, we can assume that $S_1$ and $S_2$ have the same intersection with the spine of the trisection.  Let $A_{\lambda} = V_{\lambda} \cup V_{\lambda+1} \subset Y_{\lambda} = \del Z_{\lambda}$.  By assumption, $A_{\lambda}$ bounds a pair of standard 4-balls $B_{\lambda} = S_1 \cap Z_{\lambda}$ and $B'_{\lambda} = S_2 \cap Z_{\lambda}$; by an isotopy we can assume these agree in a tubular neighborhood of the spine.  But these are diffeomorphic by Lemma \ref{lemma:std-b3-unique}.  This diffeomorphism on the sectors can be glued to give a diffeomorphism of $X$ sending $S_1$ to $S_2$.
\end{proof}

\begin{remark}
Budney and Gabai found knotted 3-balls in $S^4$ \cite{BG} .  Thus, we have taken care to work up to {diffeomorphism} and not {ambient isotopy}.
\end{remark}

\section{Reconstructing homotopy 4-balls}
\label{sec:reconstruction}

In this section, we will generalize and let $X$ be any Stein surface, not just one diffeomorphic to $B^4$.  In particular, the reconstruction does not depend upon the underlying smooth topology of $X$.

\begin{theorem}
\label{thrm:pseudoconvex-reconstruction}
Let $X$ be a Stein surface, let $B$ be a homotopy 4-ball embedded in $X$ with smooth boundary $S$, and let $\cT = (X_1,X_2,X_3)$ be a Stein trisection of $X$ that is reducible with respect to $S$.  There exists a triple of pseudoconvex domains $Z_{\lambda} \subset X_{\lambda}$ such that $Z = Z_1 \cup Z_2 \cup Z_3$ is diffeomorphic to $B$.
\end{theorem}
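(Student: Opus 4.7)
The plan is to apply the Bedford--Klingenberg filling theorem sector by sector, then invoke the Laudenbach--Po\'enaru theorem to conclude that the result is diffeomorphic to $B$.

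First, I would use the reducibility of $\cT$ to describe the stratification of $B$. By the relative analogue of Lemma~\ref{lemma:std-tri-3sphere}, the sphere $S = \del B$ meets the trisection so that $S \cap \Sigma = \delta$ is a single simple closed curve, $S \cap H_\lambda = D_\lambda$ is a disk with $\del D_\lambda = \delta$, and $S \cap X_\lambda = B_\lambda$ is a standard 3-ball with $\del B_\lambda = D_\lambda \cup D_{\lambda+1}$. Retaining the components of each stratum interior to $S$ yields a subsurface $\Sigma^{\circ} \subset \Sigma$ bounded by $\delta$, subhandlebodies $H_\lambda^{\circ} \subset H_\lambda$ bounded by $D_\lambda \cup \Sigma^{\circ}$, and 4-dimensional sectors $X_\lambda \cap B$. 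Together these assemble into the relative trisection of $B$ induced from $\cT$, with spine $\Sigma^{\circ} \cup H_1^{\circ} \cup H_2^{\circ} \cup H_3^{\circ}$.

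Next, I would apply Bedford--Klingenberg in each sector. The 2-sphere $\del B_\lambda$ lies on the boundary $\del X_\lambda = H_\lambda \cup H_{\lambda+1}$ of the analytic polyhedron $X_\lambda$. After smoothing the corner along $\delta$ to a strictly pseudoconvex hypersurface and perturbing $\del B_\lambda$ so that it is totally real away from two elliptic points and has characteristic foliation free of closed leaves, Bedford--Klingenberg produces a Levi-flat 3-ball $D_\lambda^{\ast} \subset X_\lambda$, foliated by holomorphic disks, with $\del D_\lambda^{\ast} = \del B_\lambda$. I define $Z_\lambda$ to be the subdomain of $X_\lambda$ bounded by $D_\lambda^{\ast} \cup H_\lambda^{\circ} \cup H_{\lambda+1}^{\circ}$, i.e., the side of $D_\lambda^{\ast}$ that (after the perturbation) lies in $X_\lambda \cap B$. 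Each $Z_\lambda$ is then pseudoconvex, being the intersection of the pseudoconvex analytic polyhedron $X_\lambda$ with the pseudoconvex side of the Levi-flat hypersurface $D_\lambda^{\ast}$. Setting $Z = Z_1 \cup Z_2 \cup Z_3$, one checks that the double intersections $Z_\lambda \cap Z_{\lambda-1}$ coincide with $H_\lambda^{\circ}$ and the triple intersection is $\Sigma^{\circ}$. Thus $Z$ carries a relative trisection whose spine agrees exactly with the spine of the induced trisection of $B$. Since Laudenbach--Po\'enaru (recalled in Section~\ref{sec:trisections}) guarantees that the 4-dimensional sectors of a trisection are determined up to diffeomorphism by the spine, $Z$ is diffeomorphic to $B$.

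The main obstacle is the careful application of Bedford--Klingenberg in the presence of the corners of the analytic polyhedron: the classical filling theorem requires a strictly pseudoconvex ambient hypersurface, whereas $\del X_\lambda$ is only piecewise Levi-flat and the sphere $\del B_\lambda$ straddles the corner $\delta$. Smoothing these corners and perturbing $\del B_\lambda$ into generic position so that the characteristic foliation has exactly two elliptic points and no limit cycles (and so that $D_\lambda^{\ast}$ ultimately attaches correctly to $H_\lambda^{\circ} \cup H_{\lambda+1}^{\circ}$ along the spine) is the principal technical issue of the proof.
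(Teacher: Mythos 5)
Your overall strategy is the same as the paper's: cut $\del B$ into the 3-balls $D_\lambda = X_\lambda\cap\del B$ using reducibility, replace each by a Levi-flat 3-ball produced by Bedford--Klingenberg, take the pseudoconvex side to define $Z_\lambda$, and conclude via the spine and Laudenbach--Po\'enaru. However, there is a genuine gap at the step where you assert that ``$Z$ carries a relative trisection whose spine agrees exactly with the spine of the induced trisection of $B$.'' Matching spines only implies $Z\cong B$ if $Z$ is actually a relative trisection, i.e.\ if each sector $Z_\lambda$ is a 4-dimensional 1-handlebody attached to the spine in the standard way --- equivalently, if the Bedford--Klingenberg 3-ball $D_\lambda^{\ast}$ is \emph{standard} in the sense of Definition~\ref{def:std-B3}. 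This is not automatic: a priori the filling could produce a 3-ball whose complement in $X_\lambda$ is not a 1-handlebody (the paper is explicitly wary of this, citing the knotted 3-balls of Budney--Gabai). The paper closes this gap by using the fact that $D_\lambda^{\ast}$ is pseudoconvex from \emph{both} sides, so that $Z_\lambda$ and $X_\lambda\setminus Z_\lambda$ are both Stein fillings of connected sums of $S^1\times S^2$; the uniqueness of the minimal Stein filling of $(\#_j S^1\times S^2,\xist)$ then forces $Z_\lambda\cong\natural_{j}S^1\times B^3$, which is exactly what makes $D_\lambda^{\ast}$ standard and $Z$ a relative trisection. You need this (or an equivalent) input; pure topology will not give it to you.

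A secondary, more technical issue: you propose to ``smooth the corner along $\delta$ to a strictly pseudoconvex hypersurface,'' but the faces $H_\lambda$, $H_{\lambda+1}$ of the analytic polyhedron $X_\lambda$ are Levi-flat (foliated by holomorphic disks), so smoothing the corner alone does not yield a strictly pseudoconvex boundary, which Bedford--Klingenberg requires. The paper instead uses the Stein neighborhood basis of the analytic polyhedron to approximate $X_\lambda$ from the outside by a Stein domain with smooth strictly pseudoconvex boundary, fills there, and then intersects back with $X_\lambda$. Your argument should be adjusted to do something along these lines.
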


\subsection{Setup}

To set up notation
\begin{align*}
\cT^0 & :=\text{the induced relative trisection decomposition $B = B_1 \cup B_2 \cup B_3$ }\\
\delta & :=\text{the reducing curve in the central surface $\Sigma$}\\
\Sigma^0 & :=\text{the genus $g'$ central surface of the relative trisection $\cT^0$, } \\
V_{\lambda} &:= \text{the compressing disk in $H_{\lambda}$ bounded by $\delta$}\\
S_{\lambda} & :=\text{the 2-sphere $V_{\lambda} \cup V_{\lambda+1}$ in $\del X_{\lambda}$}\\
H^0_{\lambda} & :=\text{the genus $g'$ handlebody bounded by $V_{\lambda} \cup \Sigma^0$}\\
\cS &:= \text{the spine $V_1 \cup V_2 \cup V_3$ of the reducing 3-sphere} \\
D_{\lambda} &:= \text{the standard 3-ball $X_{\lambda} \cap \del B$, which is bounded by $S_{\lambda}$}\\
B_{\lambda} &:= \text{the sector of $B$, which is bounded by $D_{\lambda} \cup H^0_{\lambda-1} \cup H^0_{\lambda}$}
\end{align*}

\subsection{Reconstruction}

Let $M_0$ denote the 3-sphere and $M_k$ denote the connected sum of $k$ copies of $S^1 \times S^2$.  The following lemma follows from the prime decomposition of 3-manifolds.

\begin{lemma}
\label{lemma:prime-decomp}
The 2-sphere $S_{\lambda}$ determines a connected sum splitting
\[\del X_{\lambda} = M_{k_{\lambda}} = M_{j_{1,\lambda}} \#_{S_{\lambda}} M_{j_{2,\lambda}}\]
where $k_{\lambda} = j_{1,\lambda} + j_{2,\lambda}$.
\end{lemma}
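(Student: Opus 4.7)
The plan is to reduce the statement directly to the Kneser--Milnor uniqueness of prime decomposition for closed, orientable $3$-manifolds. Recall that $X_\lambda \cong \natural_{k_\lambda} S^1 \times B^3$, so $\del X_\lambda \cong \#^{k_\lambda} S^1 \times S^2 = M_{k_\lambda}$, and the sphere $S_\lambda = V_\lambda \cup V_{\lambda+1}$ lies entirely in $\del X_\lambda$.

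First, I would verify that $S_\lambda$ is actually separating in $\del X_\lambda$. By Lemma \ref{lemma:std-tri-3sphere}, the reducing curve $\delta$ is essential and separating in the central surface $\Sigma$. Consequently, the compressing disk $V_\mu \subset H_\mu$ separates each genus-$g$ handlebody $H_\mu$ for $\mu = \lambda, \lambda+1$. Since $\del X_\lambda = H_\lambda \cup_\Sigma H_{\lambda+1}$, the union $S_\lambda = V_\lambda \cup V_{\lambda+1}$ therefore separates $\del X_\lambda$ into two pieces $P_1$ and $P_2$, each with boundary a copy of $S_\lambda \cong S^2$.

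Next, I would cap off each piece by a $3$-ball to obtain closed, orientable $3$-manifolds $N_1, N_2$, so that by construction
\[
\del X_\lambda \;=\; N_1 \#_{S_\lambda} N_2, \qquad N_1 \# N_2 \;\cong\; M_{k_\lambda}.
\]
It remains to identify each $N_i$ with some $M_{j_i}$. This is the only nontrivial input: by the Kneser--Milnor theorem, every closed orientable $3$-manifold admits a prime decomposition that is unique up to order and $S^3$-factors, and the prime decomposition of $M_{k_\lambda}$ consists of exactly $k_\lambda$ copies of the prime manifold $S^1 \times S^2$. Uniqueness applied to $N_1 \# N_2 \cong M_{k_\lambda}$ forces the prime summands of $N_1$ and $N_2$ to together account for exactly these $k_\lambda$ copies of $S^1 \times S^2$ and no others. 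Hence $N_i \cong \#^{j_i} S^1 \times S^2 = M_{j_i}$ for nonnegative integers $j_i$ with $j_1 + j_2 = k_\lambda$, yielding the desired decomposition.

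I don't expect any real obstacle: the entire argument is a standard $3$-manifold-topology calculation. The only points requiring a moment's care are confirming the separating property of $S_\lambda$ (which we get for free from $\delta$ being separating) and handling the degenerate possibility that $S_\lambda$ bounds a $3$-ball in $\del X_\lambda$, in which case one $j_i = 0$ and $M_0 = S^3$; this is consistent with the statement since $M_0 \# M_{k_\lambda} \cong M_{k_\lambda}$.
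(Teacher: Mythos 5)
Your argument is correct and is just a spelled-out version of the paper's own (one-line) justification, which simply invokes the prime decomposition theorem. You correctly identify the only two points needing care—$S_\lambda$ being separating (guaranteed here because $\delta$ is separating in $\Sigma$, which in turn follows from $S = \del B$ being separating in $X$) and Kneser--Milnor uniqueness forcing the capped-off pieces to be $M_{j_1}$ and $M_{j_2}$—so this is the same route the paper intends.
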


Next, we apply filling by holomorphic disks to replace $D_{\lambda}$.  We apply the main result of Bedford--Klingenberg.

\begin{theorem}[\cite{BK-filling}]
\label{thrm:BK-filling}
Let $X$ be a 2-dimensional Stein manifold, let $\Omega \subset \subset X$ be a strongly pseudoconvex domain with smooth boundary, and let $S$ be a smoothly embedded 2-sphere in $\del \Omega$. After a $C^2$-small perturbation of $S$, there is a smooth, properly embedded 3-ball $D \subset \Omega$ such that
\begin{enumerate}
    \item $S = \del D$
    \item $D$ is the disjoint union of holomorphic disks
    \item $S$ is foliated by the boundaries of the holomorphic disks
\end{enumerate}
In addition, the 3-ball $\overline{D}$ is the envelope of holomorphy of $S$.
\end{theorem}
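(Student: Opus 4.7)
The plan is to prove this by analyzing the moduli space of small holomorphic disks with boundary on $S$ and extending a Bishop family across all of $S$ by continuation. First I would put $S$ into generic position with respect to the complex tangencies in $\partial\Omega$. Since $\partial\Omega$ is strictly pseudoconvex, it inherits a contact structure from the field of complex tangencies, and $S$ carries an induced characteristic foliation. After a $C^2$-small perturbation, I may assume the complex tangencies on $S$ are non-degenerate, so they are isolated elliptic and hyperbolic points. The Euler characteristic constraint $e - h = \chi(S) = 2$, combined with Eliashberg-Giroux style elliptic-hyperbolic cancellation moves carried out by $C^2$-small isotopies of $S$ inside $\partial\Omega$, lets me reduce to the case where $S$ has exactly two elliptic complex tangencies $p_\pm$ and no hyperbolic ones.

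Next I would construct a Bishop family at $p_+$. Near an elliptic point, the Bishop theorem provides a smooth one-parameter family $\{D_t\}_{t\in(0,\epsilon)}$ of small embedded holomorphic disks with $\partial D_t \subset S$, shrinking to $p_+$ as $t\to 0$. I would then study the moduli space $\mathcal{M}$ of all embedded holomorphic disks $u:(\overline\DD,\partial\DD)\to(\Omega,S)$ in the homotopy class of the Bishop family. The linearized Cauchy-Riemann operator with totally real boundary condition on $TS\cap J(TS)^{\perp}$ has Maslov index $2$ (forced by the topology of the elliptic point and the sphere), so $\mathcal{M}$ is a smooth $3$-manifold and each disk is regular and unobstructed; the evaluation map $\mathcal{M}\times\overline\DD\to\Omega$ is a local diffeomorphism onto its image, and along the Bishop family the disks are pairwise disjoint with disjoint boundaries.

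The main obstacle is to show that the maximal connected family containing the Bishop family terminates exactly at the opposite elliptic point $p_-$, producing a continuous foliation of a compact $3$-ball. The disks cannot escape $\Omega$ because strong pseudoconvexity of $\partial\Omega$ prevents interior tangencies between a holomorphic disk and $\partial\Omega\setminus S$ (maximum principle for plurisubharmonic defining functions). Gromov compactness in a Stein surface rules out sphere bubbling (no closed holomorphic curves exist in $X$) and disk bubbling (boundary bubbles would produce nontrivial disks with boundary on $S$ of smaller Maslov index, contradicting positivity). The only remaining degenerations are (i) two boundary circles colliding on $S$, which is excluded by embeddedness and the fact that two distinct disks with boundaries tangent at a point would violate the characteristic foliation picture since no hyperbolic tangency is present, and (ii) the boundaries of the disks collapsing to a point, which by a local Bishop-type analysis can only occur at an elliptic tangency, necessarily at $p_-$. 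Combining these, the one-parameter family sweeps out a smooth $3$-ball $D$ foliated by holomorphic disks with $\partial D = S$.

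Finally, to identify $\overline{D}$ with the envelope of holomorphy of $S$, I would argue as follows. Any holomorphic function defined in a neighborhood of $S$ restricts to a continuous function on each $\partial D_t$, and by the one-variable Cauchy integral along each leaf it extends holomorphically across the interior of $D_t$; the leafwise extensions assemble into a function on $D$ that is holomorphic in the ambient sense because the family of disks varies smoothly with parameter. Hence $\overline{D}$ lies in the envelope of holomorphy of $S$. For the reverse containment, the maximum modulus principle together with the plurisubharmonic exhaustion on $X$ shows that any holomorphic continuation of functions on $S$ must be confined to the polynomially convex hull generated by the disk boundaries, which is exactly $\overline{D}$ by construction.
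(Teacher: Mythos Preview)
The paper does not contain a proof of this statement: Theorem~\ref{thrm:BK-filling} is quoted directly from Bedford--Klingenberg \cite{BK-filling} and is used as a black box in the subsequent propositions of Section~\ref{sec:reconstruction}. There is therefore no argument in the paper to compare your proposal against.

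That said, your sketch is a reasonable outline of the \emph{Eliashberg} approach to this filling theorem (via Bishop families, characteristic foliations, and Gromov compactness), rather than the original Bedford--Klingenberg argument, which is carried out in the language of classical several complex variables: they work directly with Levi-flat hypersurfaces, the Kontinuit\"atssatz, and barrier functions, and do not invoke moduli spaces of pseudoholomorphic curves or Maslov indices. Both approaches ultimately hinge on propagating a local family of analytic disks from an elliptic complex tangency and showing the family cannot terminate except at the opposite elliptic point, but the compactness and regularity mechanisms are packaged differently. If you intend your write-up to stand on its own, you should be more careful in two places: the reduction to exactly two elliptic tangencies via $C^2$-small cancellation is delicate (the relevant result is due to Eliashberg, and one must check the perturbation stays within $\partial\Omega$ and preserves strict pseudoconvexity of the ambient boundary), and the exclusion of boundary-circle collisions requires more than ``the characteristic foliation picture'' --- one typically invokes positivity of intersections for holomorphic curves together with the automatic transversality at Maslov index~$2$.
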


\begin{proposition}
There exists a domain $Z_{\lambda} \subset X_{\lambda}$ such that
\begin{enumerate}
    \item $Z_{\lambda}$ and $X_{\lambda} \setminus Z_{\lambda}$ are pseudoconvex
    \item $\del Z_{\lambda} = \widetilde{D}_{\lambda} \cup H^0_{\lambda-1} \cup H^0_{\lambda}$, where $\widetilde{D}_{\lambda}$ is a 3-ball in $X_{\lambda}$ with boundary $S_{\lambda}$.
\end{enumerate}
\end{proposition}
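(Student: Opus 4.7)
The plan is to apply the Bedford--Klingenberg filling theorem inside each sector $X_\lambda$ of the Stein trisection, replacing the boundary 3-ball $D_\lambda$ by a Levi-flat analogue, and then verify pseudoconvexity by exhibiting continuous plurisubharmonic exhaustion functions on both sides of this Levi-flat hypersurface.

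First, since $\cT$ is a Stein trisection, each $X_\lambda$ is an analytic polyhedron, hence a relatively compact pseudoconvex domain in the ambient Stein surface. The 2-sphere $S_\lambda = V_\lambda \cup V_{\lambda+1}$ sits in $\del X_\lambda$. Applying Theorem \ref{thrm:BK-filling} to a $C^2$-small smooth perturbation of $S_\lambda$ yields a smooth, properly embedded 3-ball $\widetilde{D}_\lambda \subset X_\lambda$ foliated by holomorphic disks, with $\del \widetilde{D}_\lambda$ lying on $\del X_\lambda$ and isotopic to $S_\lambda$. Because $\del \widetilde{D}_\lambda$ separates $\del X_\lambda$ into two regions, one of which contains $H^0_{\lambda-1} \cup H^0_\lambda$, the 3-ball $\widetilde{D}_\lambda$ together with $H^0_{\lambda-1} \cup H^0_\lambda$ (after absorbing the perturbation into the topological identification) bounds a unique compact subdomain of $X_\lambda$, which I declare to be $Z_\lambda$. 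This immediately establishes condition (2).

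To establish condition (1), I would argue that every smooth face of $\del Z_\lambda$ is Levi-flat. The face $\widetilde{D}_\lambda$ is Levi-flat by construction, being a disjoint union of holomorphic disks. The faces $H^0_{\lambda-1}, H^0_\lambda$ are Levi-flat because each handlebody $H_\lambda$ of a Stein trisection is foliated by holomorphic disks: in the model of Theorem \ref{thrm:b4-stein-standard} the handlebody $H_\lambda$ lies in the real hyperplane cut out by the real part of a linear holomorphic function, and this Levi-flat structure is preserved under the holomorphic branched coverings that produce general Stein trisections (Theorem \ref{thrm:hol-branched-pullback-tri}). Near any smooth boundary point of $\del Z_\lambda$ there is then a local pluriharmonic defining function $\rho$ with $\rho < 0$ inside $Z_\lambda$, and $-\log(-\rho)$ is a local plurisubharmonic function on $Z_\lambda$ that blows up along the face. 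Taking the maximum of these local plurisubharmonic functions along the codimension-2 corners of $\del Z_\lambda$ and combining with the ambient plurisubharmonic exhaustion of $X_\lambda$ produces a continuous plurisubharmonic exhaustion for $Z_\lambda$. The analogous construction with inequalities reversed handles $X_\lambda \setminus Z_\lambda$.

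The main obstacle will be the corner analysis at the edges where Levi-flat faces meet --- along $\del \widetilde{D}_\lambda$, along the reducing curve $\delta \subset \Sigma$, and along the components of $\del \Sigma^0$ on $\del X$. One must verify that the max-of-plurisubharmonic construction patches globally into a function that is finite on $Z_\lambda$ and blows up on $\del Z_\lambda$. This should follow from transversality: the analytic polyhedron structure of $X_\lambda$ provides explicit local holomorphic models at the corners, and the Bedford--Klingenberg holomorphic disks meet $\del X_\lambda$ transversally along their boundaries, so the Levi-flat foliations of adjacent faces intersect in a controlled way.
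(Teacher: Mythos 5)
Your first step (Bedford--Klingenberg filling of $S_\lambda$ to get a Levi-flat 3-ball $\widetilde{D}_\lambda$) is the right idea, but as stated it does not meet the hypotheses of Theorem \ref{thrm:BK-filling}: that theorem requires a \emph{strongly pseudoconvex domain with smooth boundary}, whereas $X_\lambda$ is an analytic polyhedron --- its boundary has corners and its smooth faces are Levi-flat, not strictly pseudoconvex. The paper's proof handles this by first using the Stein neighborhood basis of the analytic polyhedron to approximate $X_\lambda$ from the outside by a Stein domain $\widetilde{X}_\lambda$ with smooth, strictly pseudoconvex boundary, pushing $S_\lambda$ out to $\del\widetilde{X}_\lambda$, and filling there. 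You need this intermediate domain; without it the filling theorem simply does not apply.

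The second and larger gap is in your pseudoconvexity argument. You propose to build a plurisubharmonic exhaustion of $Z_\lambda$ face by face via $-\log(-\rho)$ for local pluriharmonic defining functions, and you explicitly defer the patching at the corners (along $\del\widetilde{D}_\lambda$, along $\delta$, and along $\del\Sigma^0$) as ``the main obstacle.'' That obstacle is precisely what a complete proof must resolve, and your appeal to transversality does not do it --- the behavior of a maximum of local plurisubharmonic functions across a codimension-2 edge where two Levi-flat foliations meet is exactly where local pseudoconvexity can fail. The paper avoids the corner analysis entirely: the Levi-flat 3-ball produced by Bedford--Klingenberg is pseudoconvex from both sides, so it splits the strictly pseudoconvex $\widetilde{X}_\lambda$ into two pseudoconvex pieces, and then $Z_\lambda$ and $X_\lambda\setminus Z_\lambda$ are obtained by intersecting these pieces with the pseudoconvex domain $X_\lambda$ --- and an intersection of pseudoconvex domains is pseudoconvex. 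This two-line argument both delivers condition (1) and makes your Levi-flatness claims about $H^0_{\lambda-1}$ and $H^0_\lambda$ (which the general definition of a Stein trisection does not directly supply) unnecessary.
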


\begin{proof}
The domain $X_{\lambda}$ is an analytic polyhedron. Consequently, it has a Stein neighborhood basis and we can arbitrarily approximate it from the outside by a Stein domain $\widetilde{X}_{\lambda}$ with smooth, strictly pseudoconvex boundary.  We can approximate $S_{\lambda}$ by a 2-sphere in $\del \widetilde{X}_{\lambda}$ and fill by 3-ball foliated by holomorphic disks (Theorem \ref{thrm:BK-filling}).  This 3-ball is pseudoconvex from both sides, so it decomposes $\widetilde{X}_{\lambda}$ into two pseudoconvex pieces.  We can then intersect these pieces with $X_{\lambda}$ and let $Z_{\lambda}$ denote the component with the specified boundary.
\end{proof}

The pseudoconvexity of $Z_{\lambda}$ and its complement determine their smooth topology.

\begin{proposition}
The domain $Z_{\lambda}$ is diffeomorphic to $\natural_{j_2} S^1 \times B^3$ and its complement $X_{\lambda} \setminus Z_{\lambda}$ is diffeomorphic to $\natural_{j_1} S^1 \times B^3$.
\end{proposition}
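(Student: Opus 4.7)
The plan is to identify $\del Z_\lambda$ and then induct on $j_{2,\lambda}$, using Bedford-Klingenberg (Theorem \ref{thrm:BK-filling}) at each step and Gromov-Eliashberg (Theorem \ref{thrm:pseudoconvex-standard}) as the base case. The symmetric argument applies to $X_\lambda \setminus Z_\lambda$.

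First, I would identify the boundary. The Levi-flat 3-ball $\widetilde{D}_\lambda$ has $\del \widetilde{D}_\lambda = S_\lambda$ and glues to the part of $\del X_\lambda$ on the $Z_\lambda$-side of $S_\lambda$. By Lemma \ref{lemma:prime-decomp}, $S_\lambda$ splits $\del X_\lambda = M_{k_\lambda}$ into $M_{j_{1,\lambda}} \setminus B^3$ and $M_{j_{2,\lambda}} \setminus B^3$; capping by $\widetilde{D}_\lambda$ therefore yields $\del Z_\lambda \cong M_{j_{2,\lambda}}$ and $\del(X_\lambda \setminus Z_\lambda) \cong M_{j_{1,\lambda}}$.

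Next, I would induct on $j_{2,\lambda}$. For the base case $j_{2,\lambda} = 0$, we have $\del Z_\lambda \cong S^3$; after a small perturbation smoothing the corners to give $Z_\lambda$ a strictly pseudoconvex smooth boundary, it is a pseudoconvex homotopy 4-ball in a Stein surface and (the Stein-surface version of) Theorem \ref{thrm:pseudoconvex-standard} gives $Z_\lambda \cong B^4$. For the inductive step with $j_{2,\lambda} \geq 1$, choose an essential separating 2-sphere $S' \subset \del Z_\lambda$ realizing the connect-sum splitting $M_{j_{2,\lambda}} = (S^1 \times S^2) \# M_{j_{2,\lambda}-1}$. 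After smoothing, Theorem \ref{thrm:BK-filling} produces a Levi-flat, properly embedded 3-ball $D' \subset Z_\lambda$ foliated by holomorphic disks with $\del D' = S'$. Since $D'$ is pseudoconvex from both sides, it splits $Z_\lambda$ into pseudoconvex subdomains $Z^{(1)}_\lambda, Z^{(2)}_\lambda$ with boundaries $S^1 \times S^2$ and $M_{j_{2,\lambda}-1}$, respectively. By the inductive hypothesis these are $S^1 \times B^3$ and $\natural_{j_{2,\lambda}-1} S^1 \times B^3$, and gluing them along the 3-ball $D'$ (a boundary connected sum) yields $Z_\lambda \cong \natural_{j_{2,\lambda}} S^1 \times B^3$. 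The symmetric argument on the complement gives $X_\lambda \setminus Z_\lambda \cong \natural_{j_{1,\lambda}} S^1 \times B^3$.

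The main obstacle is verifying that at each stage the smoothing and Bedford-Klingenberg filling preserve the pseudoconvex structure needed to reapply the hypothesis; in particular, the two-sided pseudoconvexity of the Levi-flat $D'$ ensures both subdomains $Z^{(i)}_\lambda$ remain Stein. One should also confirm that Theorem \ref{thrm:pseudoconvex-standard} extends from $\CC^2$ to arbitrary Stein surfaces, which follows from the same disc-filling arguments.
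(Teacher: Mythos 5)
Your argument takes a genuinely different route from the paper's. The paper identifies $\del Z_{\lambda} \cong \#_{j_{2,\lambda}} S^1 \times S^2$ exactly as you do, but then finishes in one step by quoting Eliashberg's classification of Stein fillings (as in Cieliebak--Eliashberg, Ch.~16): this 3-manifold carries a unique tight contact structure, whose unique minimal Stein filling is $\natural_{j_{2,\lambda}} S^1 \times B^3$, and $Z_{\lambda}$ is minimal because $X_{\lambda}$ is. You instead attempt to re-derive that classification by induction on $j_{2,\lambda}$, cutting along Levi-flat 3-balls supplied by Theorem \ref{thrm:BK-filling}.

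The induction does not close, and this is a genuine gap. In your inductive step you split $M_{j} = (S^1\times S^2)\,\#\, M_{j-1}$ and then invoke ``the inductive hypothesis'' for the piece with boundary $S^1\times S^2$ --- i.e.\ the case $j=1$. But $j=1$ is not your base case, and it cannot be reached by your inductive step: $S^1\times S^2$ is prime, so every separating 2-sphere in it bounds a 3-ball, and the splitting $(S^1\times S^2)\#S^3$ is realized only by an inessential sphere; Bedford--Klingenberg filling along it merely splits off a standard $B^4$ and returns the original filling problem. The statement that a compact pseudoconvex domain with boundary $S^1\times S^2$ is diffeomorphic to $S^1\times B^3$ must therefore be supplied as an independent base case, and proving it is precisely the first nontrivial instance of the Eliashberg classification the paper cites --- it is not a formal consequence of the $S^3$ case plus cutting. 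Two secondary issues: (i) your base case invokes Theorem \ref{thrm:pseudoconvex-standard}, whose hypothesis is that the domain is a \emph{homotopy 4-ball}; you only know $\del Z_{\lambda}\cong S^3$, so what you actually need is that every (minimal) Stein filling of the standard contact $S^3$ is a ball --- again part of the same classification; (ii) you never address why the induced contact structure on $\del Z_{\lambda}$ is the standard tight one (fillability gives tightness, and uniqueness of the tight structure on $\#_k S^1\times S^2$ does the rest), nor why the fillings are minimal. If you are willing to quote the filling classification to patch the $S^1\times S^2$ case, the induction becomes unnecessary and you recover the paper's one-line argument.
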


\begin{proof}
By Lemma \ref{lemma:reducibleHeegaard}, the boundary $\del Z_{\lambda}$ is homeomorphic to $\#_{j_{2,\lambda}} S^1 \times S^2$.  After smoothing the boundary, we have that $Z_{\lambda}$ is a Stein filling of its boundary.  The 3-manifold $\#_{j_{2,\lambda}} S^1 \times S^2$ admits a unique tight contact structure, which has a unique minimal Stein filling by $\natural_{j_{2,\lambda}} S^1 \times B^3$ (up to diffeomorphism) \cite[Chapter 16]{CE-12}.  Clearly $Z_{\lambda}$ is minimal since $X_{\lambda}$ is minimal.  The smooth topology of the complement is determined similarly.
\end{proof}

Applying Definition \ref{def:std-B3}, we obtain the following corollary.

\begin{corollary}
The 3-ball $\widetilde{D}_{\lambda}$ is a standard 3-ball.
\end{corollary}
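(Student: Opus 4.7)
The plan is to read this off immediately from the preceding proposition, since the real topological content has already been established. I would begin by unpacking Definition \ref{def:std-B3}: a properly embedded 3-ball in $\natural_k S^1 \times B^3$ with boundary a 2-sphere in $\del(\natural_k S^1 \times B^3) = \#_k S^1 \times S^2$ is standard precisely when each component of its complement admits a handle decomposition using only 0- and 1-handles, i.e., when the 3-ball separates the ambient 1-handlebody into pieces that are themselves 1-handlebodies.

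Next I would check that $\widetilde{D}_{\lambda}$ fits this template. It is properly embedded in $X_{\lambda} \cong \natural_{k_{\lambda}} S^1 \times B^3$ with boundary the separating 2-sphere $S_{\lambda} \subset \del X_{\lambda} \cong \#_{k_{\lambda}} S^1 \times S^2$, so it splits $X_{\lambda}$ into the two components $Z_{\lambda}$ and $X_{\lambda} \setminus Z_{\lambda}$. The preceding proposition identifies these with $\natural_{j_{2,\lambda}} S^1 \times B^3$ and $\natural_{j_{1,\lambda}} S^1 \times B^3$ respectively, which are both 1-handlebodies, so the standardness criterion of Definition \ref{def:std-B3} is satisfied verbatim. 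The consistency of the indices is the $k_{\lambda} = j_{1,\lambda} + j_{2,\lambda}$ relation from Lemma \ref{lemma:prime-decomp}.

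There is no real obstacle here; the proof is one line invoking the preceding proposition. If anything, the only bookkeeping step worth mentioning is that one might wish to also appeal to Lemma \ref{lemma:std-b3-unique}(2) to conclude that the reconstructed pair $(X_{\lambda}, \widetilde{D}_{\lambda})$ is diffeomorphic (rel boundary) to the original $(X_{\lambda}, D_{\lambda})$ from the trisection setup. This pair-diffeomorphism is what ultimately lets one glue the $Z_{\lambda}$ together and conclude in Theorem \ref{thrm:pseudoconvex-reimbedding} that $Z = Z_1 \cup Z_2 \cup Z_3$ is diffeomorphic to the original homotopy 4-ball $B$, since the trisected spines match up.
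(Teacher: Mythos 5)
Your proof is correct and matches the paper's approach exactly: the paper's entire justification is ``Applying Definition \ref{def:std-B3}'' to the preceding proposition, and you have simply spelled out that same one-line argument (both complementary components of $\widetilde{D}_{\lambda}$ in $X_{\lambda}$ are 1-handlebodies, so the definition applies). The extra remark about Lemma \ref{lemma:std-b3-unique}(2) is accurate context for how the corollary gets used later, but is not needed for the corollary itself.
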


Moreover, since $B^3 \cong D^2 \times [0,1]$ we also obtain the following corollary by definition.

\begin{corollary}
The decomposition $Z = Z_1 \cup Z_2 \cup Z_3$ is a relative trisection with spine $H^0_1 \cup H^0_2 \cup H^0_3$.
\end{corollary}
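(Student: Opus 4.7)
The plan is to verify each condition of the relative trisection definition using the structural information already assembled from the filling construction, and then to identify the spine. The first axiom---that each sector $Z_\lambda$ is a 4-dimensional 1-handlebody $\natural_{k_\lambda}(S^1 \times B^3)$---is precisely the content of the preceding proposition, with $k_\lambda = j_{2,\lambda}$.

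For the double and triple intersection axioms, the next step would be to exploit that each $Z_\lambda$ lies in $X_\lambda$, so any overlap $Z_\lambda \cap Z_{\lambda-1}$ is confined to the original Stein trisection handlebody $X_\lambda \cap X_{\lambda-1} = H_\lambda$. The filling construction was arranged so that $\partial Z_\lambda = \widetilde{D}_\lambda \cup H^0_{\lambda-1} \cup H^0_\lambda$, where each $H^0_\lambda \subset H_\lambda$ is the genus-$g'$ compression body of the induced relative trisection $\cT^0$ on $B$, while $\widetilde{D}_\lambda$ lies in the interior of $X_\lambda$ away from the spine of the original Stein trisection. This forces $Z_\lambda \cap Z_{\lambda-1} = H^0_\lambda$ and triple intersection $\Sigma^0$, which simultaneously identifies the spine of $Z$ as $H^0_1 \cup H^0_2 \cup H^0_3$.

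The remaining axiom asks that $Z_\lambda \cap \partial Z = \widetilde{D}_\lambda$ be a product $\Sigma_{p,b} \times [0,1]$ whose three copies assemble into a compatible open book on $\partial Z$. Here I would invoke the preceding corollary identifying $\widetilde{D}_\lambda$ as a standard 3-ball, together with the foliation by holomorphic disks supplied by Theorem \ref{thrm:BK-filling}, which provides a canonical product decomposition $\widetilde{D}_\lambda \cong D^2 \times [0,1]$. The main obstacle is verifying that the three disk foliations on $\widetilde{D}_1, \widetilde{D}_2, \widetilde{D}_3$ fit together consistently along the reducing 2-spheres $S_\lambda$ to form a single open book on $\partial Z$; this should follow because the boundaries of the holomorphic disks trace out meridional circles on each $S_\lambda$ that match on either side, yielding a genuine relative trisection with the claimed spine.
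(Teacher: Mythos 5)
Your argument is correct and follows the same route as the paper, which dispatches the corollary ``by definition'' once each $Z_\lambda$ is known to be a 1-handlebody with boundary $\widetilde{D}_\lambda \cup H^0_{\lambda-1}\cup H^0_\lambda$ and $\widetilde{D}_\lambda$ is a standard 3-ball, so that $\widetilde{D}_\lambda \cong B^3 \cong D^2\times[0,1]$ supplies the required product piece $\Sigma_{0,1}\times[0,1]$. Your final worry about matching the holomorphic disk foliations into an open book is unnecessary: the product structure on $\widetilde{D}_\lambda$ can be taken abstractly (it need not come from the Bedford--Klingenberg foliation), and the open book on $\partial Z$ is then an automatic consequence of the relative trisection axioms rather than something to be verified separately.
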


The final piece is the proof of Theorem \ref{thrm:pseudoconvex-reconstruction} is the following proposition.

\begin{proposition}
The 4-manifolds $B$ and $Z$ are diffeomorphic.
\end{proposition}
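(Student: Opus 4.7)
The plan is to reduce the claim to the Laudenbach--Po\'enaru uniqueness principle already invoked elsewhere in the paper: the spine of a (relative) trisection determines the underlying 4-manifold up to diffeomorphism. The preceding corollary has just established that $Z = Z_1 \cup Z_2 \cup Z_3$ is a relative trisection whose spine is $H^0_1 \cup H^0_2 \cup H^0_3$, and by construction the induced relative trisection $\cT^0$ of $B$ carries exactly the same spine. Once the two spines are recognized as identical, $B \cong Z$ will follow from the uniqueness clause of the Gay--Kirby theorem for relative trisections.

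First I would verify that the spines agree as abstract topological data, not merely as subsets sitting inside $X$. The compression disks $V_\lambda$, the handlebodies $H^0_\lambda$, and the central surface $\Sigma^0$ were fixed once in the setup of Section \ref{sec:reconstruction} and appear verbatim in the description of both $B_\lambda$ and $Z_\lambda$. The Bedford--Klingenberg filling only swaps the outer face $D_\lambda \subset \del B$ for the Levi-flat ball $\widetilde D_\lambda$ inside each sector $X_\lambda$; it touches nothing in $H^0_1 \cup H^0_2 \cup H^0_3$. Thus the abstract spines of $\cT^0$ and of the relative trisection of $Z$ are canonically identified.

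Next I would check the sector invariants. Each $B_\lambda$ is by definition a 1-handlebody $\natural_{k'_\lambda}(S^1 \times B^3)$, and the earlier proposition established $Z_\lambda \cong \natural_{j_{2,\lambda}}(S^1 \times B^3)$. Their boundaries coincide: both $D_\lambda$ and $\widetilde D_\lambda$ are standard 3-balls with boundary $S_\lambda$, so the closed 3-manifolds $\del B_\lambda = D_\lambda \cup H^0_{\lambda-1} \cup H^0_\lambda$ and $\del Z_\lambda = \widetilde D_\lambda \cup H^0_{\lambda-1} \cup H^0_\lambda$ are diffeomorphic, forcing $k'_\lambda = j_{2,\lambda}$. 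So $B_\lambda$ and $Z_\lambda$ are abstractly diffeomorphic 1-handlebodies.

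Finally, the uniqueness clause of the Gay--Kirby theorem, which packages Laudenbach--Po\'enaru sector by sector, says that attaching the 4-dimensional 1-handlebody sectors to a fixed spine produces a unique 4-manifold up to diffeomorphism. Applied to the common spine $H^0_1 \cup H^0_2 \cup H^0_3$, this yields $B \cong Z$. There is no serious obstacle here; the only point requiring care is the clean identification of the two spines, which is essentially built into the construction.
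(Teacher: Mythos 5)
Your proposal is correct and takes essentially the same approach as the paper: both identify the spine of the relative trisection of $Z$ with the spine of the induced relative trisection $\cT^0$ of $B$, and then invoke the Laudenbach--Po\'enaru/Gay--Kirby uniqueness clause that the spine determines the trisected 4-manifold up to diffeomorphism. Your extra verification that the sectors $B_\lambda$ and $Z_\lambda$ are abstractly diffeomorphic 1-handlebodies is harmless but not strictly needed, since that is a consequence of the spine-determines-manifold theorem rather than a hypothesis for it.
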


\begin{proof}
These two 4-manifolds admit relative trisections with identical spines.  Since the spine determines the trisected 4-manifold up to diffeomorphism, this implies that $B$ and $Z$ are diffeomorphic.
\end{proof}

\subsection{Shilov boundary criterion}

Let $\cS = V_1 \cup V_2 \cup V_3$ be the trisection spine.  

\begin{proof}[Proof of Theorem \ref{thrm:Shilov}]
Each $Z_{\lambda}$ is pseudoconvex.  Therefore we can choose a continuous plurisubharmonic function $G_{\lambda}: \Omega \rightarrow \RR_{\geq 0}$ such that $Z_{\lambda} = G^{-1}_{\lambda}(0)$.  We can glue these together to get an uppersemicontinuous function $G: \Omega \rightarrow \RR_{\geq 0}$ by setting
\[ G(z) := \begin{cases} \text{max}(G_1,G_2,G_3) & z \in \Sigma \\ \text{max}(G_{\lambda},G_{\lambda-1}) & z \in H_{\lambda} \\ G_{\lambda} & z \in \text{interior of } X_{\lambda}
\end{cases}\]
This function is upper semicontinuous and satisfies $Z = G^{-1}(0)$.  Since plurisubharmonicity is a local and open condition, it is plurisubharmonic on the complement of the spine.

Now suppose that $\cS$ is the Shilov boundary of $Z$ for some domain $Z \subset \Omega$.  For each $z \in \cS$, we can choose a holomorphic function $f_z$ on $\Omega$ that satisfies  $|f_z(z)| = |f_z|_Z = 1$.  Define the continuous, plurisubharmonic function
\[h = \text{max}_{z \in \cS} |f_z| - 1\]
By construction, we have that $h(Z) \subset [-1,0]$.  Define $\phi_c = \text{max}(G,Ch)$.  For $C \gg 0$, this is plurisubharmonic with $\phi^{-1}_C(0) = Z$.  In particular, $Z$ is pseudoconvex and therefore diffeomorphic to a standard 4-ball.
\end{proof}

\bibliographystyle{alpha}
\bibliography{References}

\begin{thebibliography}{CGPC18b}

\bibitem[BG19]{BG}
Ryan {Budney} and David {Gabai}.
\newblock {Knotted 3-balls in $S^4$}.
\newblock {\em arXiv e-prints}, page arXiv:1912.09029, December 2019.

\bibitem[BK91]{BK-filling}
Eric Bedford and Wilhelm Klingenberg.
\newblock On the envelope of holomorphy of a {$2$}-sphere in {${\bf C}^2$}.
\newblock {\em J. Amer. Math. Soc.}, 4(3):623--646, 1991.

\bibitem[CE12]{CE-12}
Kai Cieliebak and Yakov Eliashberg.
\newblock {\em From {S}tein to {W}einstein and back}, volume~59 of {\em
  American Mathematical Society Colloqium Publications}.
\newblock American Mathematical Society, Providence, RI, 2012.

\bibitem[CGPC18a]{CGP1}
Nickolas~A. Castro, David~T. Gay, and Juanita Pinz\'{o}n-Caicedo.
\newblock Diagrams for relative trisections.
\newblock {\em Pacific J. Math.}, 294(2):275--305, 2018.

\bibitem[CGPC18b]{CGP2}
Nickolas~A. Castro, David~T. Gay, and Juanita Pinz\'{o}n-Caicedo.
\newblock Trisections of 4-manifolds with boundary.
\newblock {\em Proc. Natl. Acad. Sci. USA}, 115(43):10861--10868, 2018.

\bibitem[Eli90]{Eliashberg}
Yakov Eliashberg.
\newblock Filling by holomorphic discs and its applications.
\newblock In {\em Geometry of low-dimensional manifolds, 2 ({D}urham, 1989)},
  volume 151 of {\em London Math. Soc. Lecture Note Ser.}, pages 45--67.
  Cambridge Univ. Press, Cambridge, 1990.

\bibitem[GK16]{Gay-Kirby}
David Gay and Robion Kirby.
\newblock Trisecting 4-manifolds.
\newblock {\em Geom. Topol.}, 20(6):3097--3132, 2016.

\bibitem[Gom95]{Gompf}
Robert~E. Gompf.
\newblock A new construction of symplectic manifolds.
\newblock {\em Ann. of Math. (2)}, 142(3):527--595, 1995.

\bibitem[Gro85]{Gromov}
M.~Gromov.
\newblock Pseudo holomorphic curves in symplectic manifolds.
\newblock {\em Invent. Math.}, 82(2):307--347, 1985.

\bibitem[{Lam}20]{SympThom}
Peter {Lambert-Cole}.
\newblock {Symplectic trisections and the adjunction inequality}.
\newblock {\em arXiv e-prints}, page arXiv:2009.11263, September 2020.

\bibitem[Lau73]{Laudenbach}
F.~Laudenbach.
\newblock Sur les {$2$}-sph\`eres d'une vari\'{e}t\'{e} de dimension {$3$}.
\newblock {\em Ann. of Math. (2)}, 97:57--81, 1973.

\bibitem[LM18]{LM-Rational}
Peter {Lambert-Cole} and Jeffrey {Meier}.
\newblock {Bridge trisections in rational surfaces}.
\newblock {\em arXiv e-prints}, page arXiv:1810.10450, October 2018.

\bibitem[LM19]{LC-Miller}
Peter {Lambert-Cole} and Maggie {Miller}.
\newblock {Trisections of 5-manifolds}.
\newblock {\em arXiv e-prints}, page arXiv:1904.01439, April 2019.

\bibitem[LMS20]{LMS}
Peter {Lambert-Cole}, Jeffrey {Meier}, and Laura {Starkston}.
\newblock {Symplectic 4-manifolds admit Weinstein trisections}.
\newblock {\em arXiv e-prints}, page arXiv:2004.01137, April 2020.

\bibitem[LP72]{LP}
F~Laudenbach and Valentin Po\'{e}naru.
\newblock A note on {$4$}-dimensional handlebodies.
\newblock {\em Bull. Soc. Math. France}, 100:337--344, 1972.

\bibitem[{Mei}20]{Meier-Relative}
Jeffrey {Meier}.
\newblock {Filling braided links with trisected surfaces}.
\newblock {\em arXiv e-prints}, page arXiv:2010.11135, October 2020.

\bibitem[MSZ16]{MSZ}
Jeffrey Meier, Trent Schirmer, and Alexander Zupan.
\newblock Classification of trisections and the generalized property {R}
  conjecture.
\newblock {\em Proc. Amer. Math. Soc.}, 144(11):4983--4997, 2016.

\bibitem[MZ17]{MZ1}
Jeffrey Meier and Alexander Zupan.
\newblock Bridge trisections of knotted surfaces in {$S^4$}.
\newblock {\em Trans. Amer. Math. Soc.}, 369(10):7343--7386, 2017.

\bibitem[MZ18]{MZ2}
Jeffrey Meier and Alexander Zupan.
\newblock Bridge trisections of knotted surfaces in 4-manifolds.
\newblock {\em Proc. Natl. Acad. Sci. USA}, 115(43):10880--10886, 2018.

\end{thebibliography}


\end{document}